\title{Rouquier blocks for Ariki-Koike algebras}
\author[S.~Lyle]{Sin\'ead Lyle}
\address{School of Mathematics, University of East Anglia, Norwich NR4 7TJ, UK.}
\email{s.lyle@uea.ac.uk}
\subjclass[2020]{20C08, 20C30, 05E10}
\keywords{Ariki-Koike algebras, Rouquier blocks, abacuses}
\thanks{This research was undertaken during the period that the author was an LMS Emmy Noether Fellow in Mathematics. We thank the LMS for their support.}
\numberwithin{equation}{section}
\numberwithin{figure}{section}
\newtheorem{lemma}{Lemma}[section]
\newtheorem{theorem}[lemma]{Theorem}
\newtheorem{proposition}[lemma]{Proposition}
\newtheorem{corollary}[lemma]{Corollary}
\theoremstyle{remark}
\newtheorem*{ex}{Example}
\newcommand{\Z}{\mathbb{Z}}
\newcommand{\la}{\lambda}
\newcommand{\La}{\Lambda}
\newcommand{\emp}{\varnothing}
\newcommand{\Laz}[1]{\Lambda^{(#1)}}
\newcommand{\Larn}{\Lambda^{(r)}_n}
\newcommand{\Lar}{\Lambda^{(r)}}
\newcommand{\Kl}{\overline{\La}^{(r)}_n}
\newcommand{\Kla}{\overline{\La}^{(r)}}
\newcommand{\R}{\mathscr{R}}
\newcommand{\Ra}{\mathcal{R}}
\newcommand{\A}{\mathscr{A}}
\newcommand{\h}{\mathcal{H}_{r,n}}
\newcommand{\hn}{\mathcal{H}_{n}}
\newcommand{\hk}{\mathrm{h}}
\newcommand{\mc}{\boldsymbol a}
\newcommand{\sd}{\sim_{e,\mc}}
\newcommand{\ap}{\approx}
\newcommand{\bs}{\boldsymbol{s}}
\newcommand{\bt}{\boldsymbol{t}}
\newcommand{\bla}{\boldsymbol{\la}}
\newcommand{\bmu}{\boldsymbol{\mu}}
\newcommand{\brho}{\boldsymbol{\rho}}
\newcommand{\M}{{\boldsymbol M}}
\DeclareMathOperator{\rad}{rad}
\DeclareMathOperator{\res}{res}
\DeclareMathOperator{\Res}{Res}
\DeclareMathOperator{\im}{Im}
\DeclareMathOperator{\wt}{wt}
\DeclareMathOperator{\Str}{Str}
\definecolor{bead}{gray}{0.2}
\begin{document}
\begin{abstract}
The Rouquier blocks, also known as the RoCK blocks, are important blocks of the symmetric groups algebras and the Hecke algebras of type $A$, with the partitions labelling the Specht modules that belong to these blocks having a particular abacus configuration. We generalise the definition of Rouquier blocks to the Ariki-Koike algebras, where the Specht modules are indexed by multipartitions, and explore the properties of these blocks. 
\end{abstract}
\maketitle

\section{Introduction}
Suppose there is a conjecture that you believe to be true for all finite groups. If no general method of attack suggests itself, perhaps you begin by proving it for specific families of groups. You might start with the abelian groups, or try to work your way through the simple groups, but fairly soon it is likely you will want to consider the symmetric groups $\mathfrak{S}_n$. One may easily see examples of this approach. In representation theory, conjectures for which partial proofs exist include Alperin's weight conjecture (conjectured in 1987~\cite{Alperin} and proved for symmetric groups in 1990~\cite{AlperinFong}), Donovan's conjecture (conjectured in 1980~\cite[Conjecture M]{AlperinBook} and proved for symmetric groups in 1991~\cite{Scopes}) and Brou\'e's abelian defect group conjecture (conjectured in 1988 and proved for symmetric groups in 2008~\cite{CK,CR}). All of these conjectures are still open in general. 

Now suppose you want to prove a theorem about the representations of the symmetric groups. Again, if no general approach is obvious, you may think about proving the result for specific blocks. A classic example of a proof following this method is the proof of Brou\'e's abelian defect group conjecture for symmetric groups. Chuang and Kessar~\cite{CK} took certain blocks defined by Rouquier~\cite{Rouquier} and proved that Brou\'e's conjecture holds for these blocks by showing that any such block of weight $w<p$ is Morita equivalent to the principal block of $\mathfrak{S}_p \wr \mathfrak{S}_w$. Subsequently Chuang and Rouquier extended the proof of Brou\'e's conjecture to all blocks~\cite{CR} by showing that any two blocks of weight $w$ are derived equivalent. 

The blocks defined by Rouquier are now known as Rouquier blocks or RoCK blocks (the RoCK standing for Rouquier or Chuang-Kessar) and have become ubiquitous in the study of the modular representations of the symmetric groups and the Hecke algebras of type $A$. Working over a field of characteristic $p \geq 0$, there is an elegant closed formula for the decomposition numbers of the Rouquier blocks when $p=0$ or when the weight of the block is less than $p$~\cite{LeclercMiyachi2,CT,JLM}.  When the weight is greater than or equal to $p$, Turner~\cite{Turner} shows how to compute the decomposition numbers in terms of (unknown) decomposition numbers for smaller Hecke algebras. In addition to their role in the proof of Brou\'e's conjecture, the Rouquier blocks have appeared in the proofs of various results about the Hecke algebras. 

\begin{itemize}
\item The first examples of homomorphism spaces between Specht modules of dimension $>1$ over fields of characteristic $p\geq 5$ were discovered in Rouquier blocks~\cite{Dodge}.
\item The proof that if $q \neq -1$ and $\lambda$ is an $(e,p)$-irreducible partition then the Specht module $S^\la$ is irreducible was completed after the proof was reduced to the case that $\la$ is a Rouquier partition~\cite{Fayers:Irred, JLM}.  
\item The Rouquier blocks formed the base case in the proof that the decomposition numbers of blocks of weight 3 are at most 1 over fields of characteristic $p \geq 5$~\cite{Fayers:W3}. 
\item James' conjecture holds for Rouquier blocks. It was proved for blocks of weights 3 using the result above and proved for blocks of weight 4 taking the Rouquier blocks as the base case in an inductive argument~\cite{Fayers:W4}. Unfortunately, James' conjecture has since been shown to be false~\cite{Williamson}, demonstrating one of the drawbacks with the approach of proving conjectures in specific cases!   
\end{itemize}

Given these results, it is not surprising that Rouquier blocks (or RoCK blocks) have been defined for other algebras, with the definition often motivated by Brou\'e's conjecture. Examples of groups for which Rouquier blocks have been defined are the finite general linear groups~\cite{Miyachi1, Turner1}, certain other finite classical groups~\cite{Livesey}, the Chevalley groups of type $E$~\cite{Miyachi} and the double covers of symmetric groups~\cite{KleshchevLivesey}. 

In this paper we consider Rouquier blocks of the Ariki-Koike algebras $\h$. A few months after the first draft of this paper appeared on the arXiv, Webster~\cite{Webster} defined the notion of a RoCK block for any categorical module over an affine Lie algebra and dominant weight in its support. Webster's definition was inspired by the work in this paper; in the case of the Ariki-Koike algebra, his RoCK blocks form a Scopes equivalence class which contains our Rouquier blocks~\cite[Theorem~B]{Webster}. (We discuss Scopes equivalence in Section~\ref{Eqs}.) Webster's work also proved Conjecture 1 of the first version of this paper, showing that Scopes equivalence gives a Morita equivalence. Thus where the first version of our paper referred to certain blocks being `decomposition equivalent,' we have now used Webster's paper to add that they are also Morita equivalent.

The Ariki-Koike algebras were defined by Ariki and Koike~\cite{ArikiKoike} as simultaneous generalisations of the Hecke algebras of type $A$ (when $r=1$) and type $B$ (when $r=2)$. They also appear as the cyclotomic Hecke algebras of type $G(r,1,n)$~\cite{BroueMalle} and have been shown to be isomorphic to the cyclotomic Khovanov-Lauda-Rouquier algebras of type $A$~\cite{BK:Blocks}. The combinatorics which controls the representation theory of $\h$ is a generalisation of the combinatorics we see in the representation theory of the Hecke algebras $\hn=\mathcal{H}_{1,n}$. In particular, there is an important class of $\h$-modules called Specht modules which are indexed by $r$-multipartitions of $n$; when $r=1$, these Specht modules are simply indexed by partitions. All composition factors of a Specht module lie in the same block and so we talk about a multipartition (or partition) lying in a block of $\h$ (or $\hn$) if the corresponding Specht module does.  

Given a partition $\la$ and an integer $s$, we define a corresponding abacus configuration. The Rouquier blocks of $\hn$ are best described using abacuses; they are blocks in which the partitions have particular abacus configurations. It is natural to ask whether we can come up with a sensible definition of Rouquier blocks for Ariki-Koike algebras using $r$-tuples of abacus configurations to represent multipartitions.
An obvious construction would be define them so that if $\bla=(\la^{(1)},\la^{(2)},\ldots,\la^{(r)})$ lies in a Rouquier block then $\la^{(k)}$ is a Rouquier partition for $1 \leq k \leq r$, and in fact this is the definition we give; we will see that this our definition of an abacus configuration ensures that this definition does depend on the multicharge associated with $\hn$.  
However on its own, this definition is not satisfactory. For one thing, without writing down all the multipartitions in the block, it seems hard to decide whether a block is Rouquier. For another, although it is a combinatorially sound generalisation of the Rouquier blocks, we would like our blocks have interesting algebraic properties. We address these questions in the paper. 

In Sections~\ref{SS1} and \ref{Blocks}, we define the Ariki-Koike algebras and the combinatorical objects that we will use. In particular, we introduce $\A^r_e$, the set of $r$-tuples of abacus  configurations with $e$ runners, and define an equivalence relation $\ap_e$ on $\A^r_e$ which corresponds to partitioning the multipartitions into blocks. In Section~\ref{S:Uglov}, we introduce a map defined by Uglov which induces a bijection $\Psi_r:\A_e^r \rightarrow \A_e$ between $r$-tuples of abacus configurations and abacus configurations. We show that the map $\Psi_r$ preserves blocks while its inverse $\Phi_r$ splits up a block according to the $r$-residue set of the $e$-quotients of its elements. Thus we obtain a bijection between a block of $\A^r_e$ and a block of $\A^e_r$. 

We see Uglov's bijection again in Section~\ref{RouquierMP} where we define our Rouquier blocks. We show that if $\Ra$ is an $r$-Rouquier block in $\A_e$ then $\Phi_r(\Ra)$ is a union of Rouquier blocks in $\A^r_e$, thus giving us a way of generating Rouquier blocks for $\h$. Not every Rouquier block shows up as an image under $\Phi_r$, but this is not problematic as we consider equivalences of blocks. 
When $r=1$, we may use Scopes equivalence~\cite{Scopes,Jost} to show that all Rouquier blocks of the same weight are both `decomposition equivalent' and Morita equivalent. Similarly, when $r \geq 1$, Dell'Arciprete~\cite{DA} gives a combinatorial description of Scopes equivalence and shows that blocks which are Scopes equivalent are decomposition equivalent. Work of Webster~\cite{Webster} shows further that Scopes equivalent also implies Morita equivalent.  
We describe how we may `stretch' a block and show that any sufficiently stretched block is a Rouquier block with the property that its image under $\Psi_r$ lies in an $r$-Rouquier block. Moreover, if we stretch a Rouquier block we obtain a Rouquier block which is both decomposition and Morita equivalent to the original block. 

There are some obvious questions to ask about our Rouquier blocks.  We would like to know whether the RoCK blocks are Morita equivalent to some kind of `local object', analogous to the work of Chuang and Kessar. We have not given serious thought as to what such an object might be and we would welcome any results in this direction. 

A second question is whether there is a closed formula for the decomposition numbers of the Rouquier blocks, say when $p=0$ or the weight of the block is sufficiently small. 
We have given such a formula when $p=0$ and the two multipartitions have a common multicore~\cite{L:AKRCores}. 
By work of Muth, Speyer and Sutton~\cite{MSS}, these decomposition numbers should be related to decomposition numbers for cell modules in the cyclotomic wreath-zigzag algebra.  Our data when $e=r=2$ and $p=0$ indicates that in this case there is probably a closed formula but that the formula is likely to be somewhat complicated. More generally, we note that the core blocks are Scopes equivalent to Rouquier blocks and for $r \geq 3$ there is no known formula for the decomposition numbers of core blocks. 

\section{Definitions}
\subsection{The Ariki-Koike algebra} \label{SS1}
For more information on the Ariki-Koike algebras, we refer the reader to the survey paper by Mathas~\cite{Mathas:AKSurvey}. Let $r\geq 1$ and $n \geq 0$ and let $\mathbb{F}$ be a field. Choose $q \in \mathbb{F} \setminus \{0\}$ and ${\bf Q}=(Q_1,\dots,Q_r) \in \mathbb{F}^r$. The Ariki-Koike algebra $\h=\h(q,\bf Q)$ is the unital associative $\mathbb{F}$-algebra with generators $T_0 , \dots, T_{n-1}$ and relations
$$\begin{array}{crcll}
& (T_i +q )(T_i -1) & = & 0, &  \text{ for } 1 \leq i \leq n-1, \\
& T_i T_j & = & T_j T_i, & \text{ for } 0 \leq i,j \leq n-1, |i-j|>1, \\
& T_i T_{i+1} T_i & = & T_{i+1} T_i T_{i+1}, & \text{ for } 1 \leq i \leq n-2,\\
& (T_0 - Q_1)\dots (T_0 - Q_r) & = & 0, & \\
& T_0 T_1 T_0 T_1 & = & T_1 T_0 T_1 T_0. &
\end{array}$$

Define $e$ to be minimal such that $1+q+\dots+q^{e-1}=0$, or set $e=\infty$ if no such value exists. Throughout this paper we shall assume that $e$ is finite and we shall refer to $e$ as the quantum characteristic. Set $I=\{0, 1,\ldots,e-1\}$.   
We say two parameters $Q_k$ and $Q_l$ are $q$-connected if $Q_k = q^a Q_l$ for some $a \in I$.  A result of Dipper and Mathas~\cite{DM:Morita} states that each Ariki-Koike algebra~$\h$ is Morita equivalent to a direct sum of tensor products of smaller algebras whose parameters are all $q$-connected.  
In view of this result, we will further assume that all our parameters are $q$-connected, in fact, that they are all powers of $q$ where $q\neq 1$, so that $q$ is a primitive $e^{\text{th}}$ root of unity in $\mathbb{F}$.  Hence there exists a unique $\mc = (a_1,a_2,\ldots,a_r) \in I^r$ such that $Q_k = q^{a_k}$ for all $1 \leq k \leq r$. 

If $r=1$ the cyclotomic relation collapses to $T_0=Q_1 \in \mathbb{F}$ and we obtain the Hecke algebra of type $A$ which is independent of $\mc$. We shall write $\mathcal{H}_n$ for $\mathcal{H}_{1,n}$. 

The algebra $\h$ is a cellular algebra according to the definition of Graham and Lehrer~\cite{GL}. We use the cellular basis described in ~\cite{DJM:CellularBasis} so that the cell modules are indexed by $r$-multipartitions of $n$. We recall the definition of $r$-multipartitions. A partition of $n$ is a sequence $\la=(\la_1,\la_2,\ldots)$ of non-negative integers such that $\la_1 \geq \la_2 \geq \ldots$ and $\sum_{i \geq 1} \la_i =n$. We write $|\la|=n$. For convenience, we will often omit the terms equal to 0 in the partition and we will use exponent notation to gather together equal terms, for example we write $(4,4,3,2,1,1,1,0,0,\ldots) = (4^2,3,2,1^3)$. Let $\La_n$ denote the set of partitions of $n$ and $\La=\bigcup_{n \geq 0} \La_n$ denote the set of all partitions. We write $\emp$ to denote the unique partition of $0$. 

An $r$-multipartition, or multipartition, of $n$ is an $r$-tuple of partitions $\bla=(\la^{(1)},\la^{(2)},\ldots,\la^{(r)})$ such that $\sum_{k=1}^r |\la^{(k)}|=n$; we write $|\bla|=n$. Write $\Larn$ to denote the set of $r$-multipartitions of $n$ and $\Laz{r}=\bigcup_{n \geq 0} \Larn$ to denote the set of all $r$-multipartitions. 
For each multipartition $\bla \in \Larn$ we define a $\h$-module $S^{\bla}$ called a Specht module; these modules are the cell modules given by the cellular basis of $\h$~\cite{DJM:CellularBasis}. When $\h$ is semisimple, the Specht modules form a complete set of non-isomorphic irreducible $\h$-modules. However, we are mainly interested in the case when $\h$ is not semisimple. By the general theory of cellular algebras, each Specht module $S^{\bla}$ comes equipped with an $\h$-invariant bilinear form. Let $\rad(S^{\bla})$ be the radical of $S^{\bla}$ with respect to this form and set $D^{\bla}=S^{\bla}/ \rad(S^{\bla})$. Define $\Kl(\mc)=\{\bla \in \Larn \mid D^{\bla} \neq \{0\}\}$ and define $\Kla(\mc) = \bigcup_{n \geq 0} \Kl(\mc)$. Then $\{D^{\bla} \mid \bla \in \Kl(\mc)\}$ is a complete set of non-isomorphic irreducible $\h$-modules.  

If $\bla \in \Larn$ and $\bmu \in \Kl(\mc)$, let $d_{\bla\bmu}=[S^{\bla}:D^{\bmu}]$ denote the multiplicity of the simple module $D^{\bmu}$ as a composition factor of the Specht module $S^{\bla}$. The matrix $D=(d_{\bla\bmu})$ is called the decomposition matrix of $\h$ and determining its entries is one of the most important open problems in the representation theory of the Ariki-Koike algebras. Again using the general theory of cellular algebras, it is known that all of the composition factors of a Specht module $S^{\bla}$ belong to the same block; consequently we can talk about a Specht module belonging to a specific block. Whether or not $S^{\bla}$ and $S^{\bmu}$ lie in the same block depends only on the tuple $\mc \in I^r$
 and so we define an equivalence relation $\sd$ on $\La^{(r)}$ by saying that $\bla \sd \bmu$ if and only if there exists $n\geq0$ such that $|\bla|=|\bmu|=n$ and $S^{\bla}$ and $S^{\bmu}$ belong to the same block of $\h$. 

\subsection{Blocks and abacus configurations}\label{Blocks} In this section we fix $r\geq 1$. Take $q\in \mathbb{F} \setminus \{0,1\}$ such that $1+q+\ldots+q^{f-1}=0$ for some $f \in \Z$ and let $e \geq 2$ be minimal with this property. Set $I=\{0,1,\ldots,e-1\}$. If $\mc=(a_1,a_2,\ldots,a_r) \in I^r$ and $n \geq 0$ we set $\h=\h(q,{\bf Q})$ where ${\bf Q} = (q^{a_1},q^{a_2},\ldots,q^{a_r})$. 

Suppose that $\bla \in \Lar$ and $\mc \in I^r$. The Young diagram of $\bla$ is the set 
\[[\bla] = \{(x,y,k) \in \Z_{>0} \times \Z_{>0} \times \{1,2 \ldots r\} \mid y \leq \la_x^{(k)}\}.\]
If $r=1$, we may omit the third component. To each node $(x,y,k) \in [\bla]$ we associate its residue $\res_{\mc}(x,y,k) = a_k + y - x \mod e$. We draw the residue diagram of $\bla$ by replacing each node in the Young diagram by its residue.

\begin{ex} Suppose that $e=3$ and $\mc=(0,1,0)$. Let $\bla=((4,3,1),(3^2),(1^2))$ and $\bmu = ((1^3),(3^4),(1))$ so that $\bla,\bmu \in \Lambda^{(3)}_{16}$. Then the residue diagrams of $\bla$ and $\bmu$ are respectively given by 

\[\young(0120,201,1)\, \quad \young(120,012)\, \quad \young(0,2)  \qquad \text{ and } \qquad
 \young(0,2,1)\, \quad \young(120,012,201,120)\,  \quad \young(0)\,.\]  
\end{ex} 

For $\bla \in \La^{(r)}$, define the residue set of $\bla$ to be the multiset $\Res_{\mc}(\bla) = \{\res_{\mc}(\mathfrak{n}) \mid \mathfrak{n} \in [\bla]\}$. 

\begin{proposition}[\cite{LM:Blocks}, Theorem~2.11] 
Suppose that $\bla,\bmu \in \La^{(r)}$ and $\mc \in I^r$. Then $\bla \sd \bmu$ if and only if $\Res_{\mc}(\bla)=\Res_{\mc}(\bmu)$. 
\end{proposition}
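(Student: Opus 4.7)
The plan is to prove the biconditional as two separate implications, each with a distinct flavour.

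For the forward implication, I would exploit the Jucys--Murphy elements $L_1, \ldots, L_n$ of $\h$ together with the cellular basis. Together with $T_0$, these generate a commutative subalgebra containing enough central elements of $\h$ to detect blocks. On any composition factor of $S^{\bla}$, the elementary symmetric functions in the $L_k$ act by the elementary symmetric functions in the multiset $\{q^{\res_{\mc}(\mathfrak{n})} \mid \mathfrak{n} \in [\bla]\}$; this follows from the triangular action of the $L_k$ on the cellular basis, whose diagonal entries are precisely these eigenvalues. Since two Specht modules in the same block share the same central character, $\bla \sd \bmu$ forces the multisets $\Res_{\mc}(\bla)$ and $\Res_{\mc}(\bmu)$ to coincide.

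For the reverse implication, I would introduce an auxiliary equivalence relation $\approx$ on $\Lar$ generated by \emph{elementary moves}: $\bla \approx \bmu$ whenever $\bmu$ is obtained from $\bla$ by removing one $i$-node and adding one $i$-node, in possibly different components, for some $i \in I$. The proof then decomposes into two claims: (i) $\bla \approx \bmu$ implies $\bla \sd \bmu$; and (ii) $\Res_{\mc}(\bla)=\Res_{\mc}(\bmu)$ implies $\bla \approx \bmu$. Claim (i) follows from the $i$-restriction and $i$-induction functors $e_i$, $f_i$, which are block-preserving summands of $\operatorname{Res}^{\h}_{\mathcal{H}_{r,n-1}}$ and $\operatorname{Ind}^{\h}_{\mathcal{H}_{r,n-1}}$ cut out by the central-character shift by $q^i$; an elementary-move neighbour $\bmu$ of $\bla$ appears as a composition factor of $f_i e_i S^{\bla}$, so $\bla$ and $\bmu$ lie in the same block.

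Claim (ii) is the combinatorial heart of the argument. I would induct on $|\bla|$: fix a residue $i$ present in the common multiset, then find $i$-removable nodes in $\bla$ and in $\bmu$ whose removal produces smaller multipartitions with matching residue content, and apply the inductive hypothesis. The subtlety is that $i$-removable nodes may sit in different components and rows of $\bla$ and $\bmu$, so some mechanism for \emph{reshuffling} across components is needed. The cleanest approach, anticipating the abacus machinery of Section~\ref{Blocks}, is to translate to $r$-tuples of abacus configurations, where residues correspond to runners and elementary moves correspond to sliding a single bead one step on one runner; claim (ii) then becomes the assertion that two $r$-tuples of abacus configurations having the same bead count on each runner (after multicharge adjustment) are connected by bead slides, which is provable by a direct sorting argument. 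The main obstacle is precisely this combinatorial step: the forward direction and claim (i) are essentially bookkeeping once the functorial framework is in place, but without the abacus translation the simultaneous management of $r$ components makes (ii) genuinely delicate.
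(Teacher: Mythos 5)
The paper does not prove this proposition; it is quoted verbatim from \cite{LM:Blocks}, so your attempt has to be measured against the proof given there. Your forward implication is correct and is the standard argument: the symmetric functions in the Jucys--Murphy elements are central and act on $S^{\bla}$ through the multiset $\{q^{\res_{\mc}(\mathfrak{n})} \mid \mathfrak{n}\in[\bla]\}$, so modules in the same block have the same residue set. That is indeed the easy half.

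The gap is in claim (i) of your reverse implication. You conclude that $\bla$ and $\bmu$ lie in the same block because $S^{\bmu}$ occurs in a Specht filtration of $f_ie_iS^{\bla}$ and $e_i$, $f_i$ are ``block-preserving''. But all one knows a priori about $e_i$ and $f_i$ is that they respect the decomposition of module categories according to central characters, i.e.\ according to residue content --- and the assertion that each residue-content class is a \emph{single} block is exactly what is being proved. A priori $f_ie_iS^{\bla}$ could split as a direct sum of modules lying in distinct blocks that happen to share a residue set, with the $S^{\bla}$-subquotient in one summand and the $S^{\bmu}$-subquotient in another; exhibiting both as subquotients of $f_ie_iS^{\bla}$ therefore only re-establishes $\Res_{\mc}(\bla)=\Res_{\mc}(\bmu)$, which is obvious for an elementary move, and gives no linkage. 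This circularity is precisely why the converse direction was a genuinely hard open problem. Lyle and Mathas close it not with induction and restriction functors but with the Jantzen sum formula for the Specht modules, which produces actual common composition factors between explicitly described pairs of Specht modules and hence honest block linkage; the combinatorial work then consists of connecting any two multipartitions with the same residue set by a chain of such Jantzen-linked pairs. Your claim (ii) is a true combinatorial statement and the abacus strategy for it is sensible (compare Proposition~\ref{GenBlock}), but the essential difficulty of the theorem lies in the representation-theoretic step you have treated as bookkeeping.
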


\begin{ex}
Continuing the example above, we see that \[\Res_{\mc}(\bla)=\Res_{\mc}(\bmu)=\{0,0,0,0,0,0,1,1,1,1,1,2,2,2,2,2\}.\] Hence $\bla \sd \bmu$ and so $S^{\bla}$ and $S^{\bmu}$ lie in the same block of $\mathcal{H}_{3,16}(q,{\boldsymbol Q})$ where $q$ satisfies $1+q+q^2=0$ and ${\boldsymbol Q}=(q^0,q^1,q^0)=(1,q,1)$. 
\end{ex}

If $r=1$, it is obvious that the relations $\sim_{e,(a)}$ and $\sim_{e,(a')}$ agree for any $a,a' \in \Z$. In this case, we will omit the second subscript and write $\la \sim_e \mu$ if there exists $n$ such that $|\la|=|\mu|=n$ and $S^\la$ and $S^\mu$ lie in the same block of $\mathcal{H}_{n}$. For $r \geq 1$, it will be convenient later to talk about a relation $\sim_{e,\bs}$ for any $\bs \in \Z^r$. We define this relation in the natural way: If $\bs \in \Z^r$ then there is a unique $\mc \in I^r$ with $s_k \equiv a_k \mod e$ for all $1 \leq k \leq r$. Set $\sim_{e,\bs}=\sim_{e,\mc}$. 

From the definition of the equivalence relation $\sd$, it is clear that if  $\bmu \in \Kl(\mc)$ and $\bla \in \Larn$ then $d_{\bla\bmu}=0$ unless $\bmu\sd \bla$. Conversely (using the properties of a cellular basis) if $\bmu,\bla \in \Larn$ and $\bmu \sd \bla$ then there exists multipartitions $\bla=\bla^{(0)},\bla^{(1)},\ldots,\bla^{(t)}=\bmu \in \Larn$ such that for all $0 \leq z <t$ the Specht modules $S^{\bla^{(z)}}$ and $S^{\bla^{(z+1)}}$ have a common composition factor. 

The abacus was first introduced by James~\cite{James:Abacus} as a way to represent partitions. In this paper, we will use abacus configurations extensively. 
We say that an $e$-abacus configuration, or simply an abacus configuration, consists of
an abacus with $e$ runners which are infinite in both directions indexed from left to right by the elements of $I$, where the possible bead positions are indexed by the elements of $\Z$ such that bead position $b$ on the abacus is in row $m$ of runner $i$ where $b=me+i$ and $0 \leq i \leq e-1$, and where there is some row of the abacus such that all higher rows are full of beads and some row of the abacus such that all lower rows do not contain any beads.  
Let $\A_e$ denote the set of all $e$-abacus configurations. 

We say that a $\beta$-set is a subset $B \subset \mathbb{Z}$ such that for all $z \ll 0$ we have $z\in B$ and for all $z \gg 0$ we have $z \notin B$.  
Given a $\beta$-set $B$ we define the abacus configuration of $B$ to be the abacus configuration which has a bead at position $b$ for each $b \in B$. When we draw abacus configurations we will draw only a finite part of the runners and we will assume that above this point the runners are full of beads and below this point there are no beads. In our examples, we will draw a line between the strictly negative and the positive positions. 

From now on, we will identify $\beta$-sets with abacus configurations. Now suppose that $\la=(\la_1,\la_2,\ldots) \in \La$ and $s \in \mathbb{Z}$. Define the $\beta$-set \[B_s(\la)=\{\la_i - i + s \mid i \geq 1\}.\]

The following result has been used many times in the literature; we give a proof since understanding the abacus will be helpful in understanding the results in this paper. 

\begin{lemma} \label{PartBij}
Let $B$ be a $\beta$-set. Then there is a unique pair $(\la,s) \in \Lambda \times \Z$ such that $B=B_s(\la)$ and we may find this pair as follows. Suppose that $B=\{b_1,b_2,\ldots\}$ where $b_1>b_2>\ldots$. For $i \geq 1$, set $\la_i = \#\{c<b_i \mid c \notin B\}$; then $\la=(\la_1,\la_2,\ldots)$. To find $s$, repeatedly choose $b \in B$ such that there exists $c<b$ with $c \notin B$ and replace $b$ with $c$ until this is no longer possible. Then $s$ is minimal such that $s \notin B$. 
\end{lemma}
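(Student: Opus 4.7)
Uniqueness is immediate: if $B = B_s(\la) = B_{s'}(\la')$, then since $\la_i, \la'_i \to 0$, the equations $b_i = \la_i - i + s = \la'_i - i + s'$ force $s = \lim_i (b_i + i) = s'$, and therefore $\la_i = \la'_i$ for all $i$.

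For existence, the crucial point is that the sequence $b_i + i$ is eventually constant. Since $B$ is a $\beta$-set, all sufficiently small integers lie in $B$, so there is some $i_0$ with $b_{i+1} = b_i - 1$ for every $i \geq i_0$; hence $b_i + i$ equals some fixed $s \in \Z$ for all $i \geq i_0$. Setting $\la_i = b_i + i - s$, the inequality $b_{i+1} \leq b_i - 1$ simultaneously gives $\la_{i+1} \leq \la_i$ and shows the sequence $b_i + i$ is non-increasing, so $b_i + i \geq s$ and hence $\la_i \geq 0$; moreover $\la_i = 0$ for $i \geq i_0$. Thus $\la \in \La$ and by construction $B = B_s(\la)$. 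To recover the lemma's formula $\la_i = \#\{c < b_i \mid c \notin B\}$, count the non-$B$ integers in the finite window $[b_{i+j}, b_i)$: this window contains $b_i - b_{i+j}$ integers of which $j$ belong to $B$ (namely $b_{i+1}, \ldots, b_{i+j}$), leaving $b_i - b_{i+j} - j = b_i + i - s - \la_{i+j}$ outside $B$, which tends to $\la_i$ as $j \to \infty$.

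Finally I treat the bead-pushing characterisation of $s$. The key step is to verify that replacing $b \in B$ by some $c < b$ with $c \notin B$ preserves $s$; a brief case analysis on where $b$ and $c$ sit relative to a threshold $t$ shows the stronger fact that the functional $\#\{b' \in B \mid b' \geq t\} - \#\{c' < t \mid c' \notin B\}$ is invariant under each push for every $t$, from which invariance of $s$ is immediate. Each push strictly decreases $|\la|$, since it removes a rim hook from the Young diagram of $\la$, so the procedure terminates after at most $|\la|$ steps. The terminal $\beta$-set admits no bead with a gap below it and therefore equals $\{s' - 1, s' - 2, \ldots\}$ for some integer $s'$, which is plainly its minimum non-element; invariance of the functional forces $s' = s$. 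The one step I expect to require real care is confluence of this non-deterministic procedure, i.e.\ independence of the final configuration from the choices made along the way; this is settled in one stroke by the invariance of $s$ under a single push.
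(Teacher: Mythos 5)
Your proof is correct, and it takes a partly different route from the paper's. For the identification of $(\la,s)$ the two arguments rest on the same underlying fact --- that $b_i=s-i$ for all large $i$ --- but package it differently: the paper anchors everything at the index $l+1$ just past the last nonzero part, writes out $B_s(\la)$ explicitly, and computes the gap count $\#\{c<b_i\mid c\notin B\}$ directly against $b_{l+1}$, whereas you define $s$ as the stable value of $b_i+i$, set $\la_i=b_i+i-s$, verify the partition axioms, and then recover the counting formula by a limiting window count. The more substantive difference is the bead-pushing characterisation of $s$: the paper disposes of this in one sentence (``bearing in mind that $s=b_{l+1}+l+1$, we can see that $s$ is indeed determined as in the lemma''), while you actually prove it by exhibiting the push-invariant $\#\{b'\in B\mid b'\geq t\}-\#\{c'<t\mid c'\notin B\}$, which drops by exactly $1$ as $t$ increases by $1$ and so has a unique zero; as you note, this also settles confluence of the non-deterministic procedure for free, which is a genuine gain in rigour over the paper. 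Two small points you should make explicit: first, to close the loop you need that this functional vanishes at $t=s$ for the \emph{original} $B$ (evaluate it at $t=b_N$ for $N$ large, where it equals $N=s-b_N$, and use the unit decrement); second, your termination argument invokes the rim-hook interpretation of a bead move, which in the paper is only established in the following lemma --- it is cleaner and self-contained to observe that a push replaces $b$ by $c$ and hence strictly decreases $\sum_i(b_i+i-s)=|\la|$ by $b-c$, using the already-established invariance of $s$.
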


\begin{proof}
Suppose that $(\la,s) \in \Lambda \times \Z$ has the property that $B_s(\la)=B$ and that $\la=(\la_1,\la_2,\ldots,\la_l,0,0,\ldots)$ where $\la_l >0$. Then 
\[B_s(\la) = \{b_1,b_2,\ldots \ldots \} = \{\la_1+s-1,\la_2+s-2,\ldots,\la_l+s-l,s-l-1,s-l-2,\ldots\}.\]
We draw the beads on the integer line: 
\begin{center}
\begin{tikzpicture}\tikzset{yscale=0.3,xscale=0.3}
\foreach \a in {0,1,2,5,10,12,20,25} {
\node at (\a,0){$\bullet$};};
\node at (25,-1){$b_1$}; \node at (20,-1){$b_2$};
\node at (2,-1){$b_{l+1}$}; \node at (5,-1){$b_l$}; 
\node at (12,-1){$b_i$}; \node at (10,-1){$b_{i+1}$};
\foreach \a in {-1.5,16,7.5} {
\node at (\a,0){$\ldots$};}; 
\end{tikzpicture}
\end{center}
For $1 \leq i \leq l$, we have \[\#\{c<b_i \mid c \notin B\}=b_{i}-b_{l+1}-1-(l-i) = (\la_i+s-i) - (s-l-1)-1 - l+i = \la_i\]
and for $i >l$, we have $\#\{c<b_i \mid c \notin B\} = 0  = \la_{i}$. Hence $\la=(\la_1,\la_2,\ldots)$ is uniquely determined by the property that $\la_i = \{c<b_i \mid c \notin B\}$. Now bearing in mind that $s=b_{l+1}+l+1$, we can see that $s$ is indeed determined as in the lemma.

We have shown that if $(\la,s) \in \Lambda \times \Z$ is such that $B_s(\la)=B$ then $(\la,s)$ does indeed satisfy the properties above. Conversely, set $\la_i=\#\{c<b_i \mid c \notin B\}$ for all $i \geq 1$. Choose $l$ maximal such that $\la_i >0$ and set $s=b_{l+1}+l+1$. Then for all $1 \leq i \leq l$ we have
\[\la_i +s - i = \#\{c<b_i \mid c \notin B\} +b_{l+1} +l+1 - i = b_{i}-b_{l+1}-1-(l-i)+ b_{l+1}+l+1-i = b_i\]
and for $i >l$ we have $\la_i+s-i=b_{l+1}+l+1-i = b_i$. 
\end{proof}

Intuitively we can think of finding $s$ by pushing the beads on the abacus to smaller positions until there are no gaps between the beads.

\begin{corollary}\label{PartBijC}
The map sending $(\la,s) \in \La \times \Z$ to $B_s(\la) \in \A_e$ defines a bijection between $\La \times \Z$ and $\A_e$.
\end{corollary}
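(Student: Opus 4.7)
The plan is to deduce this corollary essentially for free from Lemma~\ref{PartBij}, since the identification of $\beta$-sets with $e$-abacus configurations is built into the definitions. Thus the claim to verify is that $(\la,s) \mapsto B_s(\la)$ is a well-defined map $\La \times \Z \to \A_e$, and that it is both injective and surjective.

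First I would check well-definedness: given $(\la,s) \in \La \times \Z$ with $\la = (\la_1,\la_2,\ldots,\la_l,0,0,\ldots)$, the set $B_s(\la) = \{\la_i - i + s \mid i \geq 1\}$ contains every sufficiently small integer (since for $i > l$ we get $s - i$, which ranges over all integers strictly less than $s - l$) and omits every sufficiently large integer (since $\la_i - i + s \leq \la_1 + s - 1$ for all $i$, and the entries are strictly decreasing). Hence $B_s(\la)$ is a $\beta$-set, which by our earlier identification corresponds to an element of $\A_e$.

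For injectivity and surjectivity, I would appeal directly to Lemma~\ref{PartBij}. That lemma says that every $\beta$-set $B$ arises as $B_s(\la)$ for a unique pair $(\la,s) \in \La \times \Z$: existence gives surjectivity, and uniqueness gives injectivity. Since by definition every abacus configuration corresponds to a $\beta$-set (the one whose elements are the bead positions) and vice versa, the map $(\la,s) \mapsto B_s(\la)$ viewed as a map into $\A_e$ is a bijection.

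There is no serious obstacle here; the substantive content has already been proven in Lemma~\ref{PartBij}. The only thing one has to be mildly careful about is that $\A_e$ was defined as the set of abacus configurations rather than as the set of $\beta$-sets, so the corollary is really recording the translation between the two formulations. If desired, one could make this slightly more explicit by noting that the bijection sends $(\la,s)$ to the abacus with a bead in row $m$ of runner $i$ precisely when $me+i \in B_s(\la)$.
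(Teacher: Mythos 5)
Your proposal is correct and matches the paper's (implicit) reasoning: the paper gives no separate proof of this corollary precisely because it follows immediately from Lemma~\ref{PartBij} together with the identification of $\beta$-sets with abacus configurations. Your added check that $B_s(\la)$ is genuinely a $\beta$-set is a reasonable bit of extra care but does not change the approach.
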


We now have an explicit bijection between $\La \times \mathbb{Z}$ and $\A_e$ and in future we will identify the two. 
Now suppose that $r \geq 1$. We extend the definition above to a bijection between $\La^{(r)}\times \Z^r$ and $\A_e^r$: 
\[(\la^{(1)},\la^{(2)},\ldots,\la^{(r)}) \times (s_1,s_2,\ldots,s_r) \mapsto (B_{s_1}(\la^{(1)}),B_{s_2}(\la^{(2)}),\ldots,B_{s_r}(\la^{(r)})).\]

Using the identification above, if we have an abacus configuration corresponding to $(\la,s)\in \Lambda\times\Z$ then we will simply refer to it as an abacus configuration $(\la,s)\in \A_e$; similarly if $(\bla,\bs) \in \Lambda^{(r)} \times \Z^r$ we will say that $(\bla,\bs) \in \A_e^r$. 

We define an equivalence relation on $\A_e^r$ by saying that $(\bla,\bs) \ap_e (\bmu,\bs')$ if and only if $\bs=\bs'$ and $\bla \sim_{e,\bs} \bmu$. Let $\R$ be a $\ap_e$-equivalence class of $\A_e^r$. Then there exist $n \geq 0$ and $\bs \in \Z^r$ such that each element of $\R$ has the form $(\bla,\bs)$ where $\bla \in \Lambda^{(r)}_n$. Let ${\bf Q}=(q^{s_1},q^{s_2},\ldots,q^{s_r})$ and let $\mathcal{H}=\mathcal{H}_{r,n}(q,{\bf Q})$. Then the Specht modules $S^{\bla}$ for $(\bla,\bs) \in \R$ all belong to the same block of $\mathcal{H}$. We will denote this block by $\widetilde{\R}$. We will also refer to the $\ap_e$-equivalence classes as blocks.

\begin{ex} As before, let $e=3$ and $\bla=((4,3,1),(3,3),(1,1)), \bmu=((1,1,1),(3,3,3,3),(1))$. Let $\bs=(0,1,0)$. Then the abacus configurations of $(\bla,\bs)$ and $(\bmu,\bs)$ are respectively given by  
\begin{center}
\begin{tikzpicture}\tikzset{yscale=0.3,xscale=0.3}
\foreach \k in {0,1,2,5,6,7,10,11,12} {
\fill [color=brown] (\k-.1,2)--(\k-.1,7.0) -- (\k+0.1,7.0)-- (\k+.1,2)--(\k-.1,2);
\fill[color=brown](\k-.1,7.1)--(\k+.1,7.1)--(\k+.1,7.3)--(\k-.1,7.3)--(\k-.1,7.1);
\fill[color=brown](\k-.1,1.9)--(\k+.1,1.9)--(\k+.1,1.7)--(\k-.1,1.7)--(\k-.1,1.9);
\fill[color=brown](\k-.1,1.6)--(\k+.1,1.6)--(\k+.1,1.4)--(\k-.1,1.4)--(\k-.1,1.6);
}
\foreach \k in {3.5,6.5} {
\fill[color=bead] (0,\k) circle (10pt);}
\foreach \k in {4.5,5.5,6.5} {
\fill[color=bead] (1,\k) circle (10pt);}
\foreach \k in {6.5} {
\fill[color=bead] (2,\k) circle (10pt);}
\foreach \k in {3.5,5.5,6.5} {
\fill[color=bead] (5,\k) circle (10pt);}
\foreach \k in {5.5,6.5} {
\fill[color=bead] (6,\k) circle (10pt);}
\foreach \k in {4.5,6.5} {
\fill[color=bead] (7,\k) circle (10pt);}
\foreach \k in {4.5,5.5,6.5} {
\fill[color=bead] (10,\k) circle (10pt);}
\foreach \k in {6.5} {
\fill[color=bead] (11,\k) circle (10pt);}
\foreach \k in {5.5,6.5} {
\fill[color=bead] (12,\k) circle (10pt);}
\draw(-0.5,5) -- (2.5,5); \draw(4.5,5)--(7.5,5); \draw(9.5,5)--(12.5,5); 
\node at (16,5){and};
\foreach \a in {0,1,2,5,6,7,10,11,12} {
\foreach \k in {2.5,3.5,4.5,5.5,6.5}{
\node at (\a,\k)[color=bead]{$-$};};};
\end{tikzpicture} \qquad
\begin{tikzpicture}\tikzset{yscale=0.3,xscale=0.3}
\foreach \k in {0,1,2,5,6,7,10,11,12} {
\fill [color=brown] (\k-.1,2)--(\k-.1,7.0) -- (\k+0.1,7.0)-- (\k+.1,2)--(\k-.1,2);
\fill[color=brown](\k-.1,7.1)--(\k+.1,7.1)--(\k+.1,7.3)--(\k-.1,7.3)--(\k-.1,7.1);
\fill[color=brown](\k-.1,1.9)--(\k+.1,1.9)--(\k+.1,1.7)--(\k-.1,1.7)--(\k-.1,1.9);
\fill[color=brown](\k-.1,1.6)--(\k+.1,1.6)--(\k+.1,1.4)--(\k-.1,1.4)--(\k-.1,1.6);
}
\foreach \k in {4.5,6.5} {
\fill[color=bead] (0,\k) circle (10pt);}
\foreach \k in {5.5,6.5} {
\fill[color=bead] (1,\k) circle (10pt);}
\foreach \k in {5.5,6.5} {
\fill[color=bead] (2,\k) circle (10pt);}
\foreach \k in {3.5,4.5,6.5} {
\fill[color=bead] (5,\k) circle (10pt);}
\foreach \k in {4.5,6.5} {
\fill[color=bead] (6,\k) circle (10pt);}
\foreach \k in {4.5,6.5} {
\fill[color=bead] (7,\k) circle (10pt);}
\foreach \k in {4.5,5.5,6.5} {
\fill[color=bead] (10,\k) circle (10pt);}
\foreach \k in {5.5,6.5} {
\fill[color=bead] (11,\k) circle (10pt);}
\foreach \k in {6.5} {
\fill[color=bead] (12,\k) circle (10pt);}
\draw(-0.5,5) -- (2.5,5); \draw(4.5,5)--(7.5,5); \draw(9.5,5)--(12.5,5); 
\foreach \a in {0,1,2,5,6,7,10,11,12} {
\foreach \k in {2.5,3.5,4.5,5.5,6.5}{
\node at (\a,\k)[color=bead]{$-$};};};
\node at (13,5){.};
\end{tikzpicture}
\end{center}
We have seen previously that $\bla \sim_{e,\bs} \bmu$ so $(\bla,\bs) \ap_e (\bmu,\bs)$. 
\end{ex}

We would now like to describe the relation $\ap_e$ in terms of abacus configurations. If $\la \in \La$, define the rim of $[\la]$ to be the set of nodes $\{(i,j) \in [\la] \mid (i+1,j+1) \notin [\la]\}$. Let $h >0$. Then a $h$-rim hook of $[\la]$ is a connected set of $h$ nodes from the rim of $[\la]$ such that removing these nodes from $[\la]$ leaves the Young diagram of a partition.

\begin{lemma}
Suppose that $(\la,s) \in \A_e$ corresponds to the $\beta$-set $B$. Suppose that $b,c \in \Z$ are such that $c<b$ and $b \in B$ and $c \notin B$. Let $B'$ be the $\beta$-set obtained by replacing $b$ with $c$. Then $B'$ corresponds to $(\mu,s) \in \A_e$ where $[\mu]$ is obtained from $[\la]$ by removing one $(b-c)$-rim hook. 
\end{lemma}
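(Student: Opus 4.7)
The plan is to identify $B$ with the pair $(\la,s)$ via Lemma~\ref{PartBij}, compute the pair $(\mu,s')$ corresponding to $B'$ using the explicit formulas given there, and then verify combinatorially that $[\la]\setminus[\mu]$ is a connected rim hook of size $b-c$.

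\smallskip

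\noindent\textbf{Step 1: $s'=s$.} Write $B=\{b_1>b_2>\cdots\}$ and $B'=\{b'_1>b'_2>\cdots\}$. Since $B$ and $B'$ agree on $(-\infty,c)$ (both contain all sufficiently negative integers), and differ only at the two positions $b$ and $c$, for all large $i$ we have $b'_i=b_i$. From the proof of Lemma~\ref{PartBij}, for $i$ larger than the number of non-zero parts of $\la$ we have $b_i=s-i$, and the same holds on the $B'$ side, so Lemma~\ref{PartBij} gives $s'=s$.

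\smallskip

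\noindent\textbf{Step 2: compute $\mu$.} Let $j$ be such that $b=b_j$, and let $k\geq 0$ be the number of elements of $B$ lying strictly between $c$ and $b$, so that $b_{j+1}>\cdots>b_{j+k}>c>b_{j+k+1}$. Then
\[
b'_i=b_i\ (i<j),\qquad b'_i=b_{i+1}\ (j\leq i\leq j+k-1),\qquad b'_{j+k}=c,\qquad b'_i=b_i\ (i>j+k).
\]
Using $\mu_i=\#\{z<b'_i\mid z\notin B'\}$ and $\la_i=\#\{z<b_i\mid z\notin B\}$, we compare the two complements case by case. For $i<j$ both $b,c<b_i$, so the swap leaves the cardinality unchanged and $\mu_i=\la_i$. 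For $j\leq i\leq j+k-1$, $b>b_{i+1}$ is irrelevant, while $c<b_{i+1}$ is removed from the complement, giving $\mu_i=\la_{i+1}-1$. For $i=j+k$, the two sets agree below $c$, so $\mu_{j+k}=\#\{z<c\mid z\notin B\}=\la_j-\bigl((b-c)-k\bigr)=\la_j+k-(b-c)$. For $i>j+k$, $\mu_i=\la_i$.

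\smallskip

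\noindent\textbf{Step 3: $[\la]\setminus[\mu]$ is a $(b-c)$-rim hook.} A direct sum gives
\[
\sum_{i=j}^{j+k}(\la_i-\mu_i)=\sum_{i=j}^{j+k-1}(\la_i-\la_{i+1}+1)+\bigl(\la_{j+k}-\la_j-k+(b-c)\bigr)=b-c,
\]
so exactly $b-c$ nodes are removed. The removed nodes in row $i$ are the cells $(i,y)$ with $\mu_i<y\leq\la_i$. For $j\leq i<j+k$ we have $\mu_i+1=\la_{i+1}$, so $(i,\la_{i+1})$ and $(i+1,\la_{i+1})$ are both removed, giving vertical adjacency between consecutive rows; this shows the removal is connected. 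Each removed node $(i,y)$ satisfies $y\geq\la_{i+1}$ (or $y\geq\la_{j+k+1}$ in row $j+k$, using $\mu_{j+k}\geq\la_{j+k+1}$, which follows from $c>b_{j+k+1}$), hence $(i+1,y+1)\notin[\la]$, placing it on the rim. Finally, $\mu$ is a genuine partition: $\mu_{j-1}=\la_{j-1}\geq\la_{j+1}>\mu_j$, the intermediate inequalities $\mu_i\geq\mu_{i+1}$ are immediate from $\la_{i+1}\geq\la_{i+2}$, and $\mu_{j+k}\geq\la_{j+k+1}=\mu_{j+k+1}$ follows from $b-c\leq b-b_{j+k+1}-1=\la_j-\la_{j+k+1}+k$.

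\smallskip

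The only real obstacle is the index bookkeeping: one must be careful that $c$ lands in position $j+k$ of the sorted $B'$, and that the comparison between $\mu_{j+k}$ and $\la_{j+k+1}$ really uses the strict inequality $c>b_{j+k+1}$. Everything else is routine once the formulas from Lemma~\ref{PartBij} are applied row by row.
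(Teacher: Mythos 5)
Your proof is correct, but it is doing considerably more work than the paper does. The paper's own proof is two sentences: it simply cites the well-known fact that decreasing a $\beta$-number by $h$ corresponds to removing an $h$-rim hook from $[\la]$, and observes (via Lemma~\ref{PartBij}) that moving a bead does not change $s$. You instead give a complete, self-contained verification of that folklore fact: you compute the sorted $\beta$-sequence of $B'$, derive $\mu_i$ row by row from the formula $\mu_i=\#\{z<b'_i\mid z\notin B'\}$, and then check all four requirements of the paper's definition of a rim hook (correct size, connectivity via the shared column $\la_{i+1}$ between consecutive rows, membership in the rim via $(i+1,y+1)\notin[\la]$, and that $\mu$ is a partition). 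The bookkeeping is right: in particular $\mu_{j+k}=\la_j+k-(b-c)$ agrees with $\mu_{j+k}+s-(j+k)=c$, and your use of the strict inequality $c>b_{j+k+1}$ in the two places where it is needed is exactly where the argument could otherwise break. The only cosmetic gap is that the chain $\mu_{j-1}=\la_{j-1}\geq\la_{j+1}>\mu_j$ implicitly assumes $k\geq1$; when $k=0$ one instead has $\mu_j=\la_j-(b-c)\leq\la_j\leq\la_{j-1}$, which is covered by your final inequality. What your approach buys is a proof readable without outside references; what the paper's buys is brevity, at the cost of leaning on a standard result from the abacus literature.
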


\begin{proof}
It is well-known that decreasing a $\beta$-number by $h$ corresponds to removing a $h$-rim hook from $[\la]$. It follows from Lemma~\ref{PartBij} that moving a bead in an abacus configuration does not change the value of $s$. 
\end{proof}

Suppose $\la \in \Lambda$. Choose $s \in\Z$ and draw the abacus configuration for $(\la,s)$. Define the $e$-core $\bar{\la}$ of $\la$ so that $(\bar{\la},s)$ is the abacus configuration obtained by repeated taking a bead on the abacus with a gap immediately above it and moving the bead into this gap until this is no longer possible; this corresponds to removing $e$-rim hooks from $\la$. (In fact, the $e$-core is independent of the choice of $s$ since changing $s$ only shifts the residues in the Young diagram, but we would like to use the abacus to describe it and so we must pick an $s$.) We define the weight of $\la$, $\wt(\la)$, to be the number of times we moved a bead up one position to get from $\la$ to $\bar{\la}$; equivalently $\wt(\la) = (|\la|-|\bar{\la}|)/e$. 
We can now state the (so-called) Nakayama Conjecture.

\begin{proposition} [\cite{Brauer,Robinson}] \label{Whenr1}
Suppose that $\la,\mu \in \La$. Then $\la \sim_e \mu$ if and only if $\wt(\la) = \wt(\mu)$ and $\bar{\la}=\bar{\mu}$.
\end{proposition}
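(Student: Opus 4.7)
The plan is to combine the preceding lemma---which identifies moving a bead up one position on the abacus with removing an $e$-rim hook---with the residue characterisation of blocks stated earlier: for $r=1$, $\la \sim_e \mu$ if and only if $\Res(\la) = \Res(\mu)$.

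For $(\Leftarrow)$, I would first observe that every $e$-rim hook of $[\la]$ contributes exactly one node of each residue to $\Res(\la)$. Indeed, an $e$-rim hook is a connected strip of $e$ rim nodes, and consecutive rim nodes differ in residue by $\pm 1$, so the $e$ residues that appear are $e$ consecutive integers modulo $e$ and thus cover $I=\{0,1,\ldots,e-1\}$ exactly once. Iterating the preceding lemma over the $\wt(\la)$ bead moves needed to reach $\bar{\la}$ yields $\Res(\la) = \Res(\bar{\la}) \sqcup \wt(\la)\cdot I$ as multisets. If $\bar{\la}=\bar{\mu}$ and $\wt(\la)=\wt(\mu)$ this immediately gives $\Res(\la)=\Res(\mu)$, hence $\la \sim_e \mu$.

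For $(\Rightarrow)$, suppose $\la \sim_e \mu$, so $\Res(\la)=\Res(\mu)$. Fix $s\in e\Z$ and draw the abacus configurations for $(\la,s)$ and $(\mu,s)$. The key claim is that $\Res(\la)$ determines the number of beads on each runner. Adding an addable node of residue $k$ to a partition corresponds (for $s\in e\Z$) to moving a bead from a position on runner $k-1$ to the adjacent position on runner $k$: adding the node $(i,\la_i+1)$ shifts the bead at position $p=\la_i-i+s$ to position $p+1$, and the residue $(\la_i+1-i)\bmod e$ of the new node equals $(p+1)\bmod e$. Building $[\la]$ one node at a time starting from $\emp$, the net change in the bead count on runner $k$ is therefore $n_k(\la)-n_{k+1}(\la)$ (indices taken mod $e$), where $n_k(\la)$ is the multiplicity of $k$ in $\Res(\la)$. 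Thus $\Res(\la)=\Res(\mu)$ forces the two abacus configurations to agree on the bead count of every runner. Since the $e$-core is obtained by pushing beads up independently on each runner, any two configurations with the same per-runner bead counts share the same $e$-core; hence $\bar{\la}=\bar{\mu}$, and then $\wt(\la) = (|\la|-|\bar{\la}|)/e = (|\mu|-|\bar{\mu}|)/e = \wt(\mu)$.

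The main obstacle is the bookkeeping in $(\Rightarrow)$: one must set up a well-defined notion of ``number of beads on a runner'' (via comparison with the reference configuration $B_s(\emp)$, which for $s\in e\Z$ contributes the same count to every runner) and verify that the node-by-node construction of $\la$ really corresponds bijectively to the stated sequence of single-step bead moves. Once these abacus identities are settled, the remainder of the proof is routine.
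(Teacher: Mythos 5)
The paper does not actually prove this proposition: it is the classical Nakayama conjecture and is simply cited to Brauer and Robinson, so there is no in-paper argument to compare against. Your derivation of it from the residue characterisation of blocks quoted from \cite{LM:Blocks} (namely that for $r=1$, $\la \sim_e \mu$ if and only if $\Res(\la)=\Res(\mu)$) is a legitimate and standard route, and the argument is essentially correct: the bead-move bookkeeping in the $(\Rightarrow)$ direction is right (adding a node of residue $k$ does move a bead from runner $k-1$ to runner $k$ when $s\in e\Z$, the relative per-runner counts against $B_s(\emp)$ are well defined, and the core is determined by these counts together with $s$), and combining $|\la|=|\mu|$ with equality of cores gives equality of weights. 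The one point that needs tightening is in the $(\Leftarrow)$ direction: the justification that an $e$-rim hook meets every residue exactly once does not follow from the assertion that consecutive rim nodes differ in residue by $\pm 1$ --- residues that merely oscillate by $\pm 1$ need not form $e$ consecutive values. The correct observation is that, traversing the hook from its bottom-left node to its top-right node, each step either increases $y$ by $1$ or decreases $x$ by $1$, and in either case the residue $a+y-x$ increases by exactly $1$; hence the $e$ residues are $e$ consecutive integers modulo $e$ and cover $I$ exactly once. With that fix the proof is complete.
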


In order to describe the $\approx_e$-equivalence classes of $\A_e$, we introduce a little more notation. Suppose that $(\la,s) \in \A_e$ corresponds to a $\beta$-set $B$. For $i \in I$, let \[B_i = \{b \in B \mid b \equiv i \mod e\}, \qquad \qquad C_i = \{(b-i)/e \mid b \in B_i\}.\] Then each $C_i$ is a $\beta$-set and hence corresponds to a pair $(\rho_i,t_i) \in \La \times \Z$. Define a map $\eta:\A_e \rightarrow \La^{(e)} \times \Z^e$ by setting $\eta(\la,s) = (\brho, \bt) = ((\rho_0,\rho_1,\ldots,\rho_{e-1}),(t_0,t_1,\ldots,t_{e-1}))$. 

In James's notation, $\brho$ is the $e$-quotient of $\la$ and $\bt$ determines the $e$-core. Note that knowledge of $(\la,s)$ is equivalent to knowledge of $\eta(\la,s)$; also that $\wt(\la)=|\brho|$. 

\begin{lemma}
Suppose that $(\la,s),(\mu,s') \in \A_e$ are such that $\eta(\la,s)=(\brho,\bt)$ and $\eta(\mu,s')=(\brho',\bt')$. Then $(\la,s) \ap_e (\mu,s')$ if and only if $\bt=\bt'$ and $|\brho|=|\brho'|$. 
\end{lemma}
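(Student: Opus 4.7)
The plan is to invoke the Nakayama conjecture (Proposition~\ref{Whenr1}) and then reinterpret its two conditions through the bijection $\eta$.

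Unpacking the definition, since $r=1$ we have $(\la,s) \ap_e (\mu,s')$ if and only if $s=s'$ and $\la \sim_e \mu$, and by Proposition~\ref{Whenr1} the latter is equivalent to $\bar\la=\bar\mu$ together with $\wt(\la)=\wt(\mu)$. The task therefore reduces to verifying two equivalences: $s=s'$ and $\bar\la=\bar\mu$ hold simultaneously if and only if $\bt=\bt'$; and $\wt(\la)=\wt(\mu)$ if and only if $|\brho|=|\brho'|$. The second of these is immediate from the identity $\wt(\la)=|\brho|$ noted just before the lemma.

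For the first equivalence I would argue via the runner-by-runner description built into $\eta$. Producing the $e$-core on the abacus is the operation of moving each bead up on its own runner until every runner is gap-free, and on an individual runner this is precisely the bead-sliding operation of Lemma~\ref{PartBij} that determines the parameters $(\rho_i, t_i)$ of the $\beta$-set $C_i$: when all gaps on runner $i$ are removed, $\rho_i$ becomes $\emp$ while $t_i$ is fixed. Hence $\eta(\bar\la, s) = (\emp^{(e)}, \bt)$, and likewise $\eta(\bar\mu, s') = (\emp^{(e)}, \bt')$. Since $\eta$ is a bijection onto $\La^{(e)} \times \Z^e$, the pair $(\bar\la, s)$ is determined by and determines $\bt$; an easy direct calculation applied to $B=\{s-1,s-2,\ldots\}$ recovers $s$ explicitly as $s=\sum_{i=0}^{e-1} t_i$ and confirms that $\bt$ records $s$ as well as $\bar\la$. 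Therefore $s=s'$ together with $\bar\la=\bar\mu$ is equivalent to $\bt=\bt'$.

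Combining the two equivalences finishes both directions of the lemma. The only step requiring care is the observation that the push-beads-up procedure defining the $e$-core acts on each runner independently, so that in coordinates it just applies the core-extraction of Lemma~\ref{PartBij} separately to each $C_i$ and therefore does not disturb the $t_j$ for $j \neq i$; everything else is straightforward bookkeeping.
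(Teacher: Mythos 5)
Your proposal is correct and follows essentially the same route as the paper: reduce to the Nakayama conjecture (Proposition~\ref{Whenr1}) via the definition of $\ap_e$, use $\wt(\la)=|\brho|$ for the weight condition, and identify $\bar\la=\bar\mu$ (together with $s=s'$) with $\bt=\bt'$ through the runner-by-runner description. The only difference is that you supply more justification for the core equivalence, including the observation that $\bt$ already determines $s$ via $s=\sum_{i=0}^{e-1}t_i$, whereas the paper simply conditions on $s=s'$ and asserts both equivalences.
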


\begin{proof}
We have $(\la,s) \ap_e (\mu,s')$ if and only if $s=s'$ and $\la\sim_e \mu$; so by Proposition~\ref{Whenr1}, $(\la,s) \ap_e (\mu,s')$ if and only if $s=s'$, $\wt(\la)=\wt(\mu)$ and $\bar{\la}=\bar{\mu}$. Now if $s=s'$ then
\[\wt(\la)=\wt(\mu) \iff |\brho|=|\brho'| \quad \text{and } \quad \bar{\la}=\bar{\mu} \iff \bt=\bt'.\]
\end{proof}

\begin{ex}
  Let $s=6$ and set $\la=(4,2^3)$ and $\mu=(4,3,2,1)$. Then the abacus configurations of $(\la,s)$ and $(\mu,s)$ are respectively given by
  
\begin{center}
\begin{tikzpicture}\tikzset{yscale=0.3,xscale=0.3}
\foreach \k in {0,1,2,8,9,10} {
\fill [color=brown] (\k-.1,2)--(\k-.1,7.0) -- (\k+0.1,7.0)-- (\k+.1,2)--(\k-.1,2);
\fill[color=brown](\k-.1,7.1)--(\k+.1,7.1)--(\k+.1,7.3)--(\k-.1,7.3)--(\k-.1,7.1);
\fill[color=brown](\k-.1,1.9)--(\k+.1,1.9)--(\k+.1,1.7)--(\k-.1,1.7)--(\k-.1,1.9);
\fill[color=brown](\k-.1,1.6)--(\k+.1,1.6)--(\k+.1,1.4)--(\k-.1,1.4)--(\k-.1,1.6);
}
\foreach \k in {2.5,3.5,5.5,6.5} {
\fill[color=bead] (0,\k) circle (10pt);}
\foreach \k in {4.5,5.5,6.5} {
\fill[color=bead] (1,\k) circle (10pt);}
\foreach \k in {6.5,4.5} {
\fill[color=bead] (2,\k) circle (10pt);}
\foreach \k in {2.5,4.5,5.5,6.5} {
\fill[color=bead] (8,\k) circle (10pt);}
\foreach \k in {3.5,5.5,6.5} {
\fill[color=bead] (9,\k) circle (10pt);}
\foreach \k in {4.5,6.5} {
\fill[color=bead] (10,\k) circle (10pt);}
\draw(-.5,6) -- (2.5,6); \draw(7.5,6)--(10.5,6);  
\foreach \a in {0,1,2,8,9,10} {
\foreach \k in {2.5,3.5,4.5,5.5,6.5}{
\node at (\a,\k)[color=bead]{$-$};};};
\node at (3,4){,}; \node at (11,4){,}; 
\end{tikzpicture} 
\end{center}
so that
\begin{align*}
\eta(\la,s) & = (((1^2),\emp,(1)),(3,2,1)), & \eta(\mu,s) & = (((1),(1),(1)),(3,2,1)),
\end{align*}
and $(\la,s) \ap_e (\mu,s)$. The mutual core $\bar{\la}=\bar{\mu}$ can be determined by noting that $\eta(\bar{\la},s)=\eta(\bar{\mu},s) = ((\emp,\emp,\emp),(3,2,1))$ and has abacus configuration
\begin{center}
\begin{tikzpicture}\tikzset{yscale=0.3,xscale=0.3}
\foreach \k in {0,1,2} {
\fill [color=brown] (\k-.1,2)--(\k-.1,7.0) -- (\k+0.1,7.0)-- (\k+.1,2)--(\k-.1,2);
\fill[color=brown](\k-.1,7.1)--(\k+.1,7.1)--(\k+.1,7.3)--(\k-.1,7.3)--(\k-.1,7.1);
\fill[color=brown](\k-.1,1.9)--(\k+.1,1.9)--(\k+.1,1.7)--(\k-.1,1.7)--(\k-.1,1.9);
\fill[color=brown](\k-.1,1.6)--(\k+.1,1.6)--(\k+.1,1.4)--(\k-.1,1.4)--(\k-.1,1.6);
}
\foreach \k in {3.5,4.5,5.5,6.5} {
\fill[color=bead] (0,\k) circle (10pt);}
\foreach \k in {4.5,5.5,6.5} {
\fill[color=bead] (1,\k) circle (10pt);}
\foreach \k in {6.5,5.5} {
\fill[color=bead] (2,\k) circle (10pt);}
\draw(-.5,6) -- (2.5,6); 
\foreach \a in {0,1,2} {
\foreach \k in {2.5,3.5,4.5,5.5,6.5}{
\node at (\a,\k)[color=bead]{$-$};};};
\end{tikzpicture} 
\end{center}
obtained by pushing the beads up on the abacus configuration of $(\la,s)$ (and $(\mu,s)$). Thus $\bar{\la}=\bar{\mu}=(1)$. 
\end{ex}

 If $\bt=(t_0,t_1,\ldots,t_{e-1}) \in \Z^e$ and $w \geq 0$, define the $\ap_e$-equivalence class
\[\Ra(t_0,t_1,\ldots,t_{e-1};w)=\Ra(\bt;w) = \Bigg\{(\la,s) \in \A_e \mid \eta(\la,s) = (\brho,\bt) \text{ and } |\brho| = w\Bigg\}.\]

When $r>1$ it is more complicated to describe the relation $\ap_e$ using abacus configurations but it will be useful to be able to do so. Let $(\bla,\bs),(\bmu,\bs) \in \A^r_e$. 
\begin{enumerate}
\item We say that $(\bla,\bs) \xrightarrow{e}_1 (\bmu,\bs)$ if we can obtain $(\bmu,\bs)$ from $(\bla,\bs)$ by decreasing a $\beta$-number by $e$ in component $k_1$ and increasing a $\beta$-number by $e$ in component $k_2$ for some $1 \leq k_1,k_2 \leq r$. 
\item We say that $(\bla,\bs) \xrightarrow{e}_{2} (\bmu,\bs)$ if we can obtain $(\bmu,\bs)$ from $(\bla,\bs)$ as follows. 
Suppose that there exist $1 \leq k_1,k_2 \leq r$, $b_1,b_2 \in \Z$ and $h > 0$ such that 
\begin{itemize}
\item $b_1 \equiv b_2 \mod e$;
\item $b_1 \in B_{s_{k_1}}(\la^{(k_1)})$ and $b_1+h \notin B_{s_{k_1}}(\la^{(k_1)})$;
\item $b_2 \notin B_{s_{k_2}}(\la^{(k_2)})$ and $b_2+h \in B_{s_{k_2}}(\la^{(k_2)})$.
\end{itemize}
Form $(\bmu,\bs) \in \A_e$ to be the configuration where the $\beta$-numbers in each component agree with those of $(\bla,\bs)$, except that we replace $b_1$ in $B_{s_{k_1}}(\la^{(k_1)})$ with $b_1+h$ and we replace $b_2+h$ in $B_{s_{k_2}}(\la^{(k_2)})$ with $b_2$.
\end{enumerate}
Say that $(\bla,\bs) \xrightarrow{e} (\bmu,\bs)$ if $(\bla,\bs) \xrightarrow{e}_{z} (\bmu,\bs)$ for $z\in \{1,2\}$.

Note that the operation described by the relation $(\bla,\bs) \xrightarrow{e}_2 (\bmu,\bs)$ corresponds to removing a $h$-rim hook from component $k_2$ of $[\bla]$ and adding a $h$-rim hook to component $k_1$ of $[\bla]$. The condition $b_1 \equiv b_2 \mod e$ ensures that if $\mc \in I^r$ with $a_i \equiv s_i$ for $0 \leq i \leq e-1$ then $\Res_{\mc}(\bmu)=\Res_{\mc}(\bla)$. We may see this in the example below.

\begin{ex} Take $(\bla,\bs)$ to be the abacus configuration on the left and $(\bmu,\bs)$ the abacus configuration on the right. Take $k_1=2$ and $k_2=1$. Let $b_1=-1, b_2 =3$ and $h=1$.  Then $(\bla,\bs) \xrightarrow{4}_2 (\bmu,\bs)$. 

\begin{center}
\begin{tikzpicture}\tikzset{yscale=0.3,xscale=0.3}
\foreach \k in {-1,0,1,2,5,6,7,8} {
\fill [color=brown] (\k-.1,2)--(\k-.1,7.0) -- (\k+0.1,7.0)-- (\k+.1,2)--(\k-.1,2);
\fill[color=brown](\k-.1,7.1)--(\k+.1,7.1)--(\k+.1,7.3)--(\k-.1,7.3)--(\k-.1,7.1);
\fill[color=brown](\k-.1,1.9)--(\k+.1,1.9)--(\k+.1,1.7)--(\k-.1,1.7)--(\k-.1,1.9);
\fill[color=brown](\k-.1,1.6)--(\k+.1,1.6)--(\k+.1,1.4)--(\k-.1,1.4)--(\k-.1,1.6);
}
\foreach \a in {-1,0,1,2,5,6,7,8} {
\foreach \k in {2.5,3.5,4.5,5.5,6.5}{
\node at (\a,\k)[color=bead]{$-$};};};
\foreach \k in {3.5,6.5} {
\fill[color=bead] (-1,\k) circle (10pt);}
\foreach \k in {6.5} {
\fill[color=bead] (0,\k) circle (10pt);}
\foreach \k in {4.5,5.5,6.5} {
\fill[color=bead] (1,\k) circle (10pt);}
\foreach \k in {6.5} {
\fill[color=bead] (2,\k) circle (10pt);}
\foreach \k in {3.5,5.5,6.5} {
\fill[color=bead] (5,\k) circle (10pt);}
\foreach \k in {5.5,6.5} {
\fill[color=bead] (6,\k) circle (10pt);}
\foreach \k in {4.5,6.5} {
\fill[color=bead] (7,\k) circle (10pt);}
\foreach \k in {3.5,6.5} {
\fill[color=bead] (8,\k) circle (10pt);}
\draw(-1.5,5) -- (2.5,5); \draw(4.5,5)--(8.5,5); 
\fill[color=red] (8,5.5) circle (10pt); 
\fill[color=red] (-1,3.5) circle (10pt);
\end{tikzpicture} 
\raisebox{0.7cm}{$ \xrightarrow{4}_2$}
\begin{tikzpicture}\tikzset{yscale=0.3,xscale=0.3}
\foreach \k in {-1,0,1,2,5,6,7,8} {
\fill [color=brown] (\k-.1,2)--(\k-.1,7.0) -- (\k+0.1,7.0)-- (\k+.1,2)--(\k-.1,2);
\fill[color=brown](\k-.1,7.1)--(\k+.1,7.1)--(\k+.1,7.3)--(\k-.1,7.3)--(\k-.1,7.1);
\fill[color=brown](\k-.1,1.9)--(\k+.1,1.9)--(\k+.1,1.7)--(\k-.1,1.7)--(\k-.1,1.9);
\fill[color=brown](\k-.1,1.6)--(\k+.1,1.6)--(\k+.1,1.4)--(\k-.1,1.4)--(\k-.1,1.6);
}
\foreach \a in {-1,0,1,2,5,6,7,8} {
\foreach \k in {2.5,3.5,4.5,5.5,6.5}{
\node at (\a,\k)[color=bead]{$-$};};};
\foreach \k in {6.5} {
\fill[color=bead] (-1,\k) circle (10pt);}
\foreach \k in {6.5} {
\fill[color=bead] (0,\k) circle (10pt);}
\foreach \k in {4.5,5.5,6.5} {
\fill[color=bead] (1,\k) circle (10pt);}
\foreach \k in {6.5} {
\fill[color=bead] (2,\k) circle (10pt);}
\foreach \k in {3.5,5.5,6.5} {
\fill[color=bead] (5,\k) circle (10pt);}
\foreach \k in {5.5,6.5} {
\fill[color=bead] (6,\k) circle (10pt);}
\foreach \k in {4.5,6.5} {
\fill[color=bead] (7,\k) circle (10pt);}
\foreach \k in {3.5,6.5} {
\fill[color=bead] (8,\k) circle (10pt);}
\draw(-1.5,5) -- (2.5,5); \draw(4.5,5)--(8.5,5); 
\fill[color=red] (5,4.5) circle (10pt);
\fill[color=red] (2,4.5) circle (10pt);  
\end{tikzpicture} 

\end{center}
\end{ex}

\begin{proposition}[\cite{Fayers:Cores} Proposition~3.7] \label{GenBlock}
Let $\bla,\bmu \in \Lar$ and $\bs \in \Z^r$. Then $(\bla,\bs)\ap_e (\bmu,\bs)$ if and only if there exists a sequence
  \[(\bla,\bs) = (\bla_0,\bs) \xrightarrow{e} (\bla_1,\bs) \xrightarrow{e} \ldots \xrightarrow{e} (\bla_t,\bs)=(\bmu,\bs).\]
\end{proposition}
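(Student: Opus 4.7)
The forward direction is a direct verification, while the converse I would handle by induction on the combinatorial distance between the abacus configurations.

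For the forward direction, I would check that each of the moves $\xrightarrow{e}_1$ and $\xrightarrow{e}_2$ preserves the residue multiset $\Res_\mc(\bla)$; the conclusion then follows from the proposition of \cite{LM:Blocks} quoted above. For a type-$1$ move, we remove an $e$-rim hook from $\la^{(k_1)}$ and add one to $\la^{(k_2)}$; since every $e$-rim hook contains exactly one node of each residue in $I$, the multiset $\Res_\mc$ is unchanged. For a type-$2$ move, the residues of the $h$-rim hook added at $\beta$-position $b_1$ in component $k_1$ form a contiguous block of length $h$ in $\Z/e\Z$ whose starting point is determined by $b_1 \pmod e$, and similarly for the hook removed at $b_2$ in component $k_2$; the hypothesis $b_1 \equiv b_2 \pmod e$ is exactly what is needed to force these two multisets to coincide.

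For the converse, suppose $(\bla,\bs) \ap_e (\bmu,\bs)$ with $\bla \neq \bmu$. I would induct on a combinatorial distance $d(\bla, \bmu)$, measured for instance as the number of positions $(b, k) \in \Z \times \{1, \ldots, r\}$ at which their abacus configurations disagree. The goal is to exhibit a single $\xrightarrow{e}$-move $(\bla, \bs) \to (\bla', \bs)$ with $(\bla',\bs) \ap_e (\bmu,\bs)$ and $d(\bla', \bmu) < d(\bla, \bmu)$, after which the inductive hypothesis completes the argument. To find such a move, I would locate the first (in some chosen ordering) position $(b, k_1)$ at which $\bla$ has a bead and $\bmu$ does not; since $\Res_\mc(\bla) = \Res_\mc(\bmu)$, there must exist a compensating position $(b', k_2)$, possibly with $k_1 = k_2$, where $\bmu$ has a bead, $\bla$ does not, and $b \equiv b' \pmod e$. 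From this pair one reads off a candidate $\xrightarrow{e}_1$ move (when $b - b' \in e\Z$ and $k_1 \neq k_2$) or a $\xrightarrow{e}_2$ move in the general case.

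The main obstacle is guaranteeing that the resulting candidate move is actually admissible: the naive positions may fail the inequality constraints in the definitions, for example because the required source or target positions are not beads or gaps of the correct type in the correct component. To overcome this, I would allow preparatory $\xrightarrow{e}_1$ moves, which shift $e$-rim hooks around without altering $\Res_\mc$, to rearrange the abacus until the desired $\xrightarrow{e}_2$ move becomes available. A careful runner-by-runner comparison of $(\bla,\bs)$ and $(\bmu,\bs)$ should make this rigorous and yield a terminating procedure. An alternative, conceptually cleaner route that becomes available after Section \ref{S:Uglov} is to transport the statement through Uglov's bijection $\Psi_r:\A^r_e\to\A_e$ to the single-abacus setting, where the analogous claim reduces to Proposition \ref{Whenr1}; this avoids the detailed case analysis at the cost of depending on later material.
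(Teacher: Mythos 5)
The paper does not actually prove this statement: it is quoted from \cite{Fayers:Cores}, Proposition~3.7, so there is no internal argument to compare yours against. Judged on its own terms, your forward direction is correct and essentially complete: both types of move preserve $\Res_{\mc}$ (an $e$-rim hook contains exactly one node of each residue, and for a type-$2$ move the congruence $b_1\equiv b_2 \bmod e$ together with the common value of $h$ forces the added and removed $h$-rim hooks to have equal residue multisets), and the quoted Lyle--Mathas block criterion then gives $\bla\sd\bmu$.

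The converse, which is the substantial half, has a genuine gap. You correctly note that equality of residue multisets forces, for each $i$, the total bead population on runner $i$ summed over all components to agree, so a compensating position $(b',k_2)$ with $b'\equiv b\bmod e$ does exist. But passing from this mismatched pair to an admissible move is exactly where the work lies, and you do not do it: a type-$2$ move requires a single $h>0$ with $b_1\in B$, $b_1+h\notin B$ in one component and $b_2\notin B$, $b_2+h\in B$ in the other, so it moves two beads by the same amount in opposite directions, and there is no reason the first mismatched pair you locate supports such a move. Your proposed repair --- interposing preparatory $\xrightarrow{e}_1$ moves until the desired type-$2$ move ``becomes available'' --- comes with no argument that the process terminates or that it does not increase your distance function $d$, so the induction is not closed; ``a careful runner-by-runner comparison should make this rigorous'' is a statement of intent rather than a proof. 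Finally, the suggested alternative route through $\Psi_r$ cannot work as described: within this paper, Proposition~\ref{BlockPreserve}, Lemmas~\ref{EqSwap1} and~\ref{EqSwap2} and Theorem~\ref{EqEq} are all deduced \emph{from} Proposition~\ref{GenBlock}, so invoking them here would be circular; and in any case $\Psi_r$ preserves the block relation in one direction only (the paper exhibits $\la\sim_e\mu$ with $\Phi_r(\la,s)\not\ap_e\Phi_r(\mu,s)$), so the $r$-fold statement does not reduce to Proposition~\ref{Whenr1} by transport of structure.
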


Proposition~\ref{GenBlock} shows that we may generate the $\sd$-equivalence class of $\bla$ (that is, find the Specht modules $S^{\bmu}$ which lie in the same block as $S^{\bla}$) by repeatedly adding and removing rim hooks from $[\bla]$ such that $\Res_{\mc}(\bla)$ is preserved at each stage. In practical terms, this can be slow. In the next section we use a map defined by Uglov~\cite{Uglov} which can sometimes make this process much quicker. We will also use Uglov's map in Section~\ref{RouquierMP}. 

\subsection{Uglov's map} \label{S:Uglov}
In this section we continue with the assumptions from Section~\ref{Blocks}, namely that $r$, $q$ and $e$ are fixed, and given $n \geq 0$ and $\mc \in I^r$ we define an algebra $\h$.  
For $1 \leq k\leq r$, we define a map $\psi_k: \Z \rightarrow \Z$ as follows. For $x\in \Z$, write $x=me+i$ where $0 \leq i <e$. Then $\psi_k(x) = ((m+1)r-k)e+i$. 

The next result follows immediately from this definition. 

\begin{lemma} \label{PsiMove}
Suppose $1 \leq k \leq r$ and $x \in \Z$; write $x=me+i$ where $0 \leq i <e$. Suppose $h \in \Z$ with $h>0$ and write $h=m'e+i'$ where $0 \leq i'<e$. Then
\[\psi_k(x+h) = \begin{cases}
\psi_k(x) + m're + i', & i+i' <e, \\
\psi_k(x) + m're + i' + (r-1)e, & i+i' \geq e.
\end{cases}\]
\end{lemma}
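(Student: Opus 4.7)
The plan is to prove this by direct computation, splitting into the two cases according to whether $i+i' < e$ or $i+i' \geq e$, since these are exactly the cases that determine the base-$e$ expansion of $x+h$.

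First I would write $x+h = (m+m')e + (i+i')$ and observe that to apply the definition of $\psi_k$ I need the expression of $x+h$ in the form $Me+I$ with $0 \leq I < e$. In the case $i+i' < e$, this expression is immediate: $M = m+m'$ and $I = i+i'$, so by definition
\[\psi_k(x+h) = ((m+m'+1)r - k)e + (i+i') = ((m+1)r-k)e + m're + i' = \psi_k(x) + m're + i',\]
which matches the first branch.

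In the case $i+i' \geq e$, I would rewrite $i+i' = e + (i+i'-e)$ with $0 \leq i+i'-e < e$, so that $x+h = (m+m'+1)e + (i+i'-e)$. Applying the definition of $\psi_k$ gives
\[\psi_k(x+h) = ((m+m'+2)r - k)e + (i+i'-e) = ((m+1)r-k)e + (m'+1)re + i' - e,\]
and rearranging yields $\psi_k(x) + m're + i' + (r-1)e$, as required.

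Since the proof is purely a routine unpacking of the definition, there is no substantial obstacle; the only point requiring a moment's care is not to conflate the ``internal'' base-$e$ expression used to define $\psi_k$ with the expression $(m+m')e + (i+i')$ for $x+h$ when $i+i' \geq e$, which is why the case split is necessary.
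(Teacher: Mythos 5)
Your proof is correct and is exactly the routine verification the paper has in mind; the paper gives no argument at all, stating only that the lemma ``follows immediately from this definition.'' The one blemish is that in both displayed intermediate expressions you have dropped a ``$+i$'' (for instance $((m+m'+1)r-k)e+(i+i') = ((m+1)r-k)e + i + m're + i'$, not $((m+1)r-k)e + m're + i'$), but since $\psi_k(x)=((m+1)r-k)e+i$ absorbs that $i$, your final equalities in both cases are nevertheless correct.
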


Now let $(\bla,\bs) \in \A_e^r$. For each $1 \leq k \leq r$ we have the $\beta$-set $B_{s_k}(\la^{(k)})$. Set $B=\bigsqcup_{k=1}^r \psi_k(B_{s_k}(\la^{(k)}))$; note that this is a disjoint union since if $k \neq k'$ then $\im \psi_k \cap \im \psi_{k'} = \emptyset$. 
If $z\in\Z$ then there exist unique integers $m,\,k$ and $i$ such that $z=((m+1)r-k)e+i$ and if $z$ is sufficiently small then, since $B_{s_k}(\la^{(k)})$ is a $\beta$-set, we have $me+i \in B_{s_k}(\la^{(k)})$ so that $z \in B$; similarly if $z$ is sufficiently large then $me+i \notin B_{s_k}(\la^{(k)})$ and so $z \notin B$. Thus $B$ itself is a $\beta$-set and so corresponds to $(\tilde{\la},\tilde{s}) \in \A_e$. We define $\Psi_r:\A_e^r \rightarrow \A_e$ by setting $\Psi_r(\bla,\bs)=(\tilde{\la},\tilde{s})$.

The map $\Psi_r$ was originally defined by Uglov~\cite[\S~ 4.1]{Uglov}. It appears in~\cite{JL}, and indeed, our understanding of the map came from this paper. The easiest way to understand it (in our opinion) is to look at the abacus configurations. Draw the $e$-abacus configurations for $(\bla,\bs)$. Create a new $e$-abacus by going from right to left and from top to bottom across these configurations, repeatedly taking a row of an abacus and adding it to the new abacus. By considering the map $\Psi_r$ in this way, it is clear that it is a bijection from $\A_e^r$ to $\A_e$. We let $\Phi_r = \Psi_r^{-1}$. 

If $r=1$ then $\Psi_r$ and $\Phi_r$ are simply the identity maps. 

\begin{ex}
We can see this procedure in the example below. Each $3 \times 3$ block on the right corresponds to 3 rows of length 3 on the left. 

\begin{center}
\begin{tikzpicture}\tikzset{yscale=0.3,xscale=0.3}
\foreach \k in {0,1,2,5,6,7,10,11,12} {
\fill [color=brown] (\k-.1,0)--(\k-.1,7) -- (\k+0.1,7)-- (\k+.1,0)--(\k-.1,0);
\fill[color=brown](\k-.1,7.1)--(\k+.1,7.1)--(\k+.1,7.3)--(\k-.1,7.3)--(\k-.1,7.1);
\fill[color=brown](\k-.1,-.1)--(\k+.1,-.1)--(\k+.1,-.3)--(\k-.1,-.3)--(\k-.1,-.1);
\fill[color=brown](\k-.1,-.4)--(\k+.1,-.4)--(\k+.1,-.6)--(\k-.1,-.6)--(\k-.1,-.4);
}
\foreach \k in {1.5,2.5,5.5,6.5} {
\fill[color=bead] (0,\k) circle (10pt);}
\foreach \k in {2.5,3.5,4.5,5.5,6.5} {
\fill[color=bead] (1,\k) circle (10pt);}
\foreach \k in {1.5,4.5,5.5,6.5} {
\fill[color=bead] (2,\k) circle (10pt);}
\foreach \k in {1.5,2.5,5.5,6.5} {
\fill[color=bead] (5,\k) circle (10pt);}
\foreach \k in {2.5,4.5,5.5,6.5} {
\fill[color=bead] (6,\k) circle (10pt);}
\foreach \k in {1.5,3.5,5.5,6.5} {
\fill[color=bead] (7,\k) circle (10pt);}
\foreach \k in {5.5,6.5} {
\fill[color=bead] (10,\k) circle (10pt);}
\foreach \k in {3.5,5.5,6.5} {
\fill[color=bead] (11,\k) circle (10pt);}
\foreach \k in {2.5,4.5,5.5,6.5} {
\fill[color=bead] (12,\k) circle (10pt);}
\draw(-.5,6) -- (2.5,6); \draw(4.5,6)--(7.5,6); \draw(9.5,6)--(12.5,6); 
\foreach \a in {0,1,2,5,6,7,10,11,12} {
\foreach \k in {0.5,1.5,2.5,3.5,4.5,5.5,6.5}{
\node at (\a,\k)[color=bead]{$-$};};};
\foreach \k in {20,21,22} {
\fill [color=brown] (\k-.1,-9)--(\k-.1,7) -- (\k+0.1,7)-- (\k+.1,-9)--(\k-.1,-9);
\fill[color=brown](\k-.1,7.1)--(\k+.1,7.1)--(\k+.1,7.3)--(\k-.1,7.3)--(\k-.1,7.1);
\fill[color=brown](\k-.1,-9.1)--(\k+.1,-9.1)--(\k+.1,-9.3)--(\k-.1,-9.3)--(\k-.1,-9.1);
\fill[color=brown](\k-.1,-9.4)--(\k+.1,-9.4)--(\k+.1,-9.6)--(\k-.1,-9.6)--(\k-.1,-9.4);
}
\foreach \k in {-8.5,-7.5,-5.5,-4.5,3.5,4.5,5.5,6.5} {
\fill[color=bead] (20,\k) circle (10pt);}
\foreach \k in {-5.5,-4.5,-2.5,-0.5,0.5,1.5,3.5,4.5,5.5,6.5} {
\fill[color=bead] (21,\k) circle (10pt);}
\foreach \k in {-8.5,-7.5,-3.5,-1.5,0.5,2.5,3.5,4.5,5.5,6.5} {
\fill[color=bead] (22,\k) circle (10pt);}
\draw(19.5,6) -- (22.5,6);
\draw[dotted](19.5,3)--(22.5,3);
\draw[dotted](19.5,0)--(22.5,0);
\draw[dotted](19.5,-3)--(22.5,-3);
\draw[dotted](19.5,-6)--(22.5,-6);
\foreach \a in {20,21,22} {
\foreach \k in {-8.5,...,6.5}{
\node at (\a,\k)[color=bead]{$-$};};};
\node at (16,3){$\overset{\Psi_3}{\longrightarrow}$};

\end{tikzpicture}
\end{center}
\end{ex}

Jacon and Lecouvey~\cite[Corollary~2.27]{JL} show that if we make certain assumptions on $\bs$, the maps $\Psi_r$ and $\Phi_r$ both preserve blocks. Without their assumptions, one direction of this result still holds. 

\begin{proposition}\label{BlockPreserve}
Let $\bs \in \Z^r$. Suppose that $\bla,\bmu \in \Lambda^{(r)}$ and $(\bla,\bs) \ap_e (\bmu,\bs)$. Then $\Psi_r(\bla,\bs) \ap_e \Psi_r(\bmu,\bs)$.
\end{proposition}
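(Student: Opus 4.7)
\emph{Proof proposal.} By Proposition~\ref{GenBlock}, the relation $\ap_e$ on $\A^r_e$ is generated by the elementary moves $\xrightarrow{e}_1$ and $\xrightarrow{e}_2$, so it is enough to show that $(\bla,\bs) \xrightarrow{e} (\bmu,\bs)$ implies $\Psi_r(\bla,\bs) \ap_e \Psi_r(\bmu,\bs)$. I would handle the two types of move separately.

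For a $\xrightarrow{e}_2$ move with data $k_1, k_2, b_1, b_2, h$, write $h = m'e + i'$ and let $i$ be the common residue of $b_1$ and $b_2$ modulo $e$. Lemma~\ref{PsiMove} shows that the displacement $H := \psi_{k_j}(b_j + h) - \psi_{k_j}(b_j)$ depends only on $i$ and $i'$, so in particular $H$ is the same for $j = 1$ and $j = 2$, it is positive, and $H \equiv h \pmod e$. The positions $\psi_{k_1}(b_1)$ and $\psi_{k_2}(b_2)$ are both congruent to $i$ modulo $e$, and since each $\psi_k$ is injective with pairwise disjoint images, the $\beta$-set occupancy conditions on the four relevant positions in $\A^r_e$ transfer directly to the corresponding conditions in $B = \bigsqcup_k \psi_k(B_{s_k}(\la^{(k)}))$. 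Thus under $\Psi_r$ the move becomes a $\xrightarrow{e}_2$ move in $\A_e$, and Proposition~\ref{GenBlock} applied with $r = 1$ gives $\Psi_r(\bla,\bs) \ap_e \Psi_r(\bmu,\bs)$.

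For a $\xrightarrow{e}_1$ move (decrease a $\beta$-number by $e$ in component $k_1$, increase by $e$ in component $k_2$), Lemma~\ref{PsiMove} with $h = e$ gives $\psi_k(b \pm e) = \psi_k(b) \pm re$. Hence under $\Psi_r$ one bead moves down by $re$ on runner $i_1 = b_1 \bmod e$ and another moves up by $re$ on runner $i_2 = b_2 \bmod e$; when $i_1 \neq i_2$ this is not a single elementary move in $\A_e$. Instead, write $\Psi_r(\bla,\bs) = (\tilde\la, \tilde s)$ with $\eta(\tilde\la, \tilde s) = (\brho, \bt)$, and analogously for $\bmu$, and apply Proposition~\ref{Whenr1}. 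The $\tilde s$-values agree because the total bead count is preserved; $\bt$ is preserved, and hence so is the $e$-core, because each affected bead stays on its original runner (as $\psi_k$ preserves residues modulo $e$), so the bead count on each runner is unchanged; and the weight $\wt(\tilde\la) = |\brho|$ is preserved because on runner $i_1$ the affected bead's position in $C_{i_1}$ drops by exactly $r$, reducing $|\rho_{i_1}|$ by $r$, while symmetrically $|\rho_{i_2}|$ increases by $r$.

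The main obstacle is the $\xrightarrow{e}_1$ case, where the image under $\Psi_r$ is typically not a single elementary move of $\A_e$. Resisting the temptation to decompose it into a sequence of $\xrightarrow{e}_1$ moves in $\A_e$ (which would require careful control of intermediate $\beta$-set configurations) and instead invoking Nakayama's conjecture for $r = 1$ via the three invariants $\tilde s$, $e$-core and weight is what makes this step manageable.
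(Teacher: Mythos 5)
Your proposal is correct and follows essentially the same route as the paper: reduce to the two generating moves via Proposition~\ref{GenBlock} and use Lemma~\ref{PsiMove} to track the effect of each move on the image under $\Psi_r$. The only difference is cosmetic: in the type-2 case you observe that the image is literally a single $\xrightarrow{e}_2$ move in $\A_e$ (an observation the paper only makes later, in Lemma~\ref{EqSwap2}), whereas the paper checks directly that the two equal-length bead moves preserve $|\la|$ and the runner contents and then invokes Proposition~\ref{Whenr1}.
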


\begin{proof}
Using Proposition~\ref{GenBlock}, it is sufficient to show that the result is true if $(\bla,\bs) \xrightarrow{e}_1 (\bmu,\bs)$ or $(\bla,\bs) \xrightarrow{e}_2 (\bmu,\bs)$. Suppose that $(\bla,\bs) \xrightarrow{e}_z (\bmu,\bs)$ for $z \in \{1,2\}$. Since we obtain $(\bmu,\bs)$ from $(\bla,\bs)$ by moving beads around in the abacus configurations, Lemma~\ref{PartBij} tells us that $\Psi_r(\bla,\bs)=(\la,s)$ and $\Psi_r(\bmu,\bs)=(\mu,s)$ for some $\la,\mu \in \La$ and $s \in \Z$. 
It remains to show that $\la \sim_e \mu$. 

Suppose $(\bla,\bs) \xrightarrow{e}_1 (\bmu,\bs)$. Note that by Lemma~\ref{PsiMove}, if $b \in \Z$ then for $1 \leq k \leq r$ we have $\psi_k(b + e) = \psi_k(b) + re$. So if we form $\bmu$ by removing an $e$-rim hook from component $k_1$ of $\bla$ and adding an $e$-rim hook to component $k_2$ of $\bla$ then we form $(\mu,s)$ from $(\la,s)$ by removing $r$ $e$-rim hooks and adding $r$ $e$-rim hooks. Hence $\la \sim_e \mu$.

Now suppose $(\bla,\bs) \xrightarrow{e}_2 (\bmu,\bs)$, where we use the notation given in the definition. Suppose $b_1=me+i$ and $h=m'e+i'$. First suppose that $i+i'<e$. Then we form $(\mu,s)$ from $(\la,s)$ by first moving a bead up $m're+i'$ positions so that it moves from runner $i$ to runner $i+i'$ and then moving a bead down $m're+i'$ positions so that it moves from runner $i+i'$ to runner $i$. Since we move the beads the same distance, $\la$ and $\mu$ are partitions of the same integer and since we end up with the number of beads on each runner unchanged, they have the same $e$-core. 

The argument is similar if $i+i'\geq e$. 
\end{proof}

The natural converse to Proposition~\ref{BlockPreserve} is false as we see below.

\begin{ex}
Let $s=16$ and suppose that $(\la,s)$ and $(\mu,s)$ have abacus configurations given respectively by
\begin{center}
\begin{tikzpicture}\tikzset{yscale=0.3,xscale=0.3}
\foreach \k in {0,1,2} {
\fill [color=brown] (\k-.1,-1)--(\k-.1,7) -- (\k+0.1,7)-- (\k+.1,-1)--(\k-.1,-1);
\fill[color=brown](\k-.1,7.1)--(\k+.1,7.1)--(\k+.1,7.3)--(\k-.1,7.3)--(\k-.1,7.1);
\fill[color=brown](\k-.1,-1.1)--(\k+.1,-1.1)--(\k+.1,-1.3)--(\k-.1,-1.3)--(\k-.1,-1.3);
\fill[color=brown](\k-.1,-1.4)--(\k+.1,-1.4)--(\k+.1,-1.6)--(\k-.1,-1.6)--(\k-.1,-1.4);
}
\foreach \k in {2.5,5.5,6.5} {
\fill[color=bead] (0,\k) circle (10pt);}
\foreach \k in {1.5,2.5,3.5,4.5,5.5,6.5} {
\fill[color=bead] (1,\k) circle (10pt);}
\foreach \k in {0.5,1.5,2.5,3.5,4.5,5.5,6.5} {
\fill[color=bead] (2,\k) circle (10pt);}
\draw(-.5,7) -- (2.5,7);
\node at (3.5,4) {,};
\foreach \k in {10,11,12} {
\fill [color=brown] (\k-.1,-1)--(\k-.1,7) -- (\k+0.1,7)-- (\k+.1,-1)--(\k-.1,-1);
\fill[color=brown](\k-.1,7.1)--(\k+.1,7.1)--(\k+.1,7.3)--(\k-.1,7.3)--(\k-.1,7.1);
\fill[color=brown](\k-.1,-1.1)--(\k+.1,-1.1)--(\k+.1,-1.3)--(\k-.1,-1.3)--(\k-.1,-1.3);
\fill[color=brown](\k-.1,-1.4)--(\k+.1,-1.4)--(\k+.1,-1.6)--(\k-.1,-1.6)--(\k-.1,-1.4);
}
\foreach \k in {3.5,5.5,6.5} {
\fill[color=bead] (10,\k) circle (10pt);}
\foreach \k in {1.5,2.5,3.5,4.5,5.5,6.5} {
\fill[color=bead] (11,\k) circle (10pt);}
\foreach \k in {-0.5,1.5,2.5,3.5,4.5,5.5,6.5} {
\fill[color=bead] (12,\k) circle (10pt);}
\draw(9.5,7) -- (12.5,7);
\node at (13.5,4){,}; 
\foreach \a in {0,1,2,10,11,12} {
\foreach \k in {-0.5,...,6.5}{
\node at (\a,\k)[color=bead]{$-$};};};
\end{tikzpicture}
\end{center}
so that $\la \sim_e \mu$. Then their images under $\Phi_2$ are
\begin{center}
\begin{tikzpicture}\tikzset{yscale=0.3,xscale=0.3}
\foreach \k in {0,1,2,5,6,7} {
\fill [color=brown] (\k-.1,3)--(\k-.1,7) -- (\k+0.1,7)-- (\k+.1,3)--(\k-.1,3);
\fill[color=brown](\k-.1,7.1)--(\k+.1,7.1)--(\k+.1,7.3)--(\k-.1,7.3)--(\k-.1,7.1);
\fill[color=brown](\k-.1,2.9)--(\k+.1,2.9)--(\k+.1,2.7)--(\k-.1,2.7)--(\k-.1,2.9);
\fill[color=brown](\k-.1,2.6)--(\k+.1,2.6)--(\k+.1,2.4)--(\k-.1,2.4)--(\k-.1,2.6);
}
\foreach \k in {6.5} {
\fill[color=bead] (0,\k) circle (10pt);}
\foreach \k in {4.5,5.5,6.5} {
\fill[color=bead] (1,\k) circle (10pt);}
\foreach \k in {4.5,5.5,6.5} {
\fill[color=bead] (2,\k) circle (10pt);}
\foreach \k in {4.5,6.5} {
\fill[color=bead] (5,\k) circle (10pt);}
\foreach \k in {4.5,5.5,6.5} {
\fill[color=bead] (6,\k) circle (10pt);}
\foreach \k in {3.5,4.5,5.5,6.5} {
  \fill[color=bead] (7,\k) circle (10pt);}
\draw(-.5,7)--(2.5,7); \draw(4.5,7)--(7.5,7);
\node at (8.5,5) {,};
\foreach \k in {20,21,22,25,26,27} {
\fill [color=brown] (\k-.1,3)--(\k-.1,7) -- (\k+0.1,7)-- (\k+.1,3)--(\k-.1,3);
\fill[color=brown](\k-.1,7.1)--(\k+.1,7.1)--(\k+.1,7.3)--(\k-.1,7.3)--(\k-.1,7.1);
\fill[color=brown](\k-.1,2.9)--(\k+.1,2.9)--(\k+.1,2.7)--(\k-.1,2.7)--(\k-.1,2.9);
\fill[color=brown](\k-.1,2.6)--(\k+.1,2.6)--(\k+.1,2.4)--(\k-.1,2.4)--(\k-.1,2.6);
}
\foreach \k in {5.5,6.5} {
\fill[color=bead] (20,\k) circle (10pt);}
\foreach \k in {4.5,5.5,6.5} {
\fill[color=bead] (21,\k) circle (10pt);}
\foreach \k in {3.5,4.5,5.5,6.5} {
\fill[color=bead] (22,\k) circle (10pt);}
\foreach \k in {6.5} {
\fill[color=bead] (25,\k) circle (10pt);}
\foreach \k in {4.5,5.5,6.5} {
\fill[color=bead] (26,\k) circle (10pt);}
\foreach \k in {4.5,5.5,6.5} {
  \fill[color=bead] (27,\k) circle (10pt);}
\draw(19.5,7)--(22.5,7); \draw(24.5,7)--(27.5,7);
\node at (28.5,5) {.};
\foreach \a in {0,1,2,5,6,7,20,21,22,25,26,27} {
\foreach \k in {3.5,...,6.5}{
\node at (\a,\k)[color=bead]{$-$};};};
\end{tikzpicture}
\end{center}
So $\Phi_2(\la,s) = (((2^2,1^2),(3,1^5)),(7,9))$ and $\Phi_2(\mu,s) = (((3,1^2),(2^2,1^2)),(9,7))$ and $\Phi_2(\la,s) \not\ap_e \Phi_2(\mu,s)$.  
\end{ex}

Proposition~\ref{BlockPreserve} does immediately give us the following result.

\begin{corollary} \label{DU} 
Suppose that $\Ra$ is a $\ap_e$-equivalence class of $\A_e$. Then $\Phi_r(\Ra)$ is the disjoint union of $\ap_e$-equivalence classes in $\A_e^{(r)}$. 
\end{corollary}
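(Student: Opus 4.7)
The plan is to derive this almost immediately from Proposition~\ref{BlockPreserve}, which is the non-trivial input. The claim really amounts to saying that the set $\Phi_r(\Ra)$ is saturated with respect to the equivalence relation $\ap_e$ on $\A_e^r$: once we know this, writing $\Phi_r(\Ra)$ as the (disjoint) union of the $\ap_e$-equivalence classes of its elements finishes the job, since any two distinct equivalence classes of any equivalence relation are automatically disjoint.

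To show saturation, I would take $(\bla,\bs) \in \Phi_r(\Ra)$ and suppose $(\bmu,\bs') \ap_e (\bla,\bs)$, and verify that $(\bmu,\bs') \in \Phi_r(\Ra)$. From the definition of $\ap_e$ on $\A_e^r$ we get $\bs' = \bs$. Applying Proposition~\ref{BlockPreserve} yields $\Psi_r(\bmu,\bs) \ap_e \Psi_r(\bla,\bs)$. But $\Psi_r(\bla,\bs) \in \Ra$ by assumption, and $\Ra$ is a $\ap_e$-equivalence class, so $\Psi_r(\bmu,\bs) \in \Ra$ as well. Applying $\Phi_r = \Psi_r^{-1}$ gives $(\bmu,\bs) \in \Phi_r(\Ra)$, as required.

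Having established that $\Phi_r(\Ra)$ is a union of $\ap_e$-equivalence classes in $\A_e^r$, the disjointness is immediate from the general fact that distinct equivalence classes intersect trivially, and we conclude.

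There really isn't a main obstacle here; the content of the corollary is entirely packaged in Proposition~\ref{BlockPreserve}, and the proof is a one-line verification using the fact that $\Phi_r$ is a bijection with inverse $\Psi_r$. The only point that requires any care is noticing that the multicharge component of $(\bla,\bs)$ is forced to be preserved by $\ap_e$, so that the hypothesis of Proposition~\ref{BlockPreserve} (which is only stated for pairs sharing a common $\bs$) applies.
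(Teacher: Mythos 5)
Your proof is correct and is exactly the argument the paper intends: the paper gives no explicit proof, stating only that the corollary follows immediately from Proposition~\ref{BlockPreserve}, and your saturation argument (using that $\Phi_r$ is the inverse bijection of $\Psi_r$ and that $\ap_e$ forces $\bs'=\bs$) is the standard one-line verification being alluded to.
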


In fact, it is not difficult to describe when the full converse to Proposition~\ref{BlockPreserve} holds. Recall the map $\eta:\A_e \rightarrow \La^{(e)} \times \Z^e$ that we introduced in Section~\ref{Blocks}. If $r=1$, the maps $\Psi_r$ and $\Phi_r$ are just the identity maps, so for the remainder of this section, we assume that $r \geq 2$. 

\begin{lemma}\label{EqSwap1}
Suppose that $\la,\mu \in \Lambda$ and that $\la \sim_e \mu$. Let $s \in \Z$. Then $\eta(\la,s) \xrightarrow{r}_1 \eta(\mu,s)$ if and only if $\Phi_r(\la,s) \xrightarrow{e}_1 \Phi_r(\mu,s)$.  
\end{lemma}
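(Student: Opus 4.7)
The plan is to unwind both sides of the claimed equivalence into the language of bead moves on the single $e$-abacus for $(\la,s)$, and then observe that these bead moves coincide. The map $\eta$ just partitions the $\beta$-set of $(\la,s)$ by runner, so a move on runner $i$ in $\A_r^e$ of the form ``decrease a $\beta$-number by $r$'' corresponds precisely, under the rescaling $c\mapsto ce+i$ built into the definition of $C_i$, to decreasing a $\beta$-number of $B_s(\la)$ by $re$ in runner $i$ of the big $e$-abacus, that is, to sliding a bead up by $r$ rows on runner $i$. Hence $\eta(\la,s)\xrightarrow{r}_1\eta(\mu,s)$ holds if and only if $(\mu,s)$ is obtained from $(\la,s)$ by moving one bead up $r$ rows on some runner $i_1$ and one bead down $r$ rows on some runner $i_2$ of the $e$-abacus.

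Next I would translate the right-hand condition through $\Psi_r$. By definition $\psi_k(me+i)=((m+1)r-k)e+i$, so the beads coming from component $k$ of $\Phi_r(\la,s)$ occupy exactly the rows of the big $e$-abacus that are congruent to $-k\pmod r$. Therefore moving a bead up one row on runner $j$ in component $k$ of $\Phi_r(\la,s)$ corresponds (by Lemma~\ref{PsiMove} with $h=e$, so $m'=1$, $i'=0$) to shifting a bead on runner $j$ of the big abacus from row $(m+1)r-k$ to row $mr-k$, i.e.\ up by exactly $r$ rows; and conversely, any move of $r$ rows upward on runner $j$ of the big abacus preserves the row class mod $r$, and so lifts under $\Phi_r$ to a move of one row in the (unique) component singled out by that class. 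Thus $\Phi_r(\la,s)\xrightarrow{e}_1\Phi_r(\mu,s)$ is equivalent to the same big-abacus description: move one bead up $r$ rows on some runner $i_1$ and one bead down $r$ rows on some runner $i_2$. Matching this with step one gives the claimed equivalence.

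The only remaining verification is bookkeeping: I need to check that the tuple $\bs'$ component of $\Phi_r(\la,s)$ and of $\Phi_r(\mu,s)$ really does coincide (so that $\xrightarrow{e}_1$ makes sense), and similarly for $\bt$ under $\eta$. In both cases the operation moves a bead within a single runner of the big $e$-abacus, which preserves the bead count on every runner; translated back through $\Psi_r$, it also preserves the bead count on every runner of every component (the row class mod $r$ is unchanged), so by Lemma~\ref{PartBij} the ``charge'' data $\bs'$ and $\bt$ do not change. I do not expect any genuine obstacle here; the only thing one must be careful about is the precise indexing convention of $\psi_k$ and the fact that an $r$-row move on the big abacus stays in one residue class mod $r$, both of which are built directly into the definitions.
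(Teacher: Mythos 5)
Your proposal is correct and follows essentially the same route as the paper: both translate $\eta(\la,s)\xrightarrow{r}_1\eta(\mu,s)$ into moving one bead up and one bead down by $r$ rows on runners of the single $e$-abacus, and then use Lemma~\ref{PsiMove} (with $h=e$) to identify this with a one-row move in the appropriate components of $\Phi_r(\la,s)$. Your extra bookkeeping on the residue class mod $r$ of the rows and on the invariance of the charges $\bs'$ and $\bt$ is sound and just makes explicit what the paper leaves implicit.
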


\begin{proof}
We have $\eta(\la,s) \xrightarrow{r}_1 \eta(\mu,s)$ if and only if there exist $0 \leq k_1,k_2 <e$ such that $(\mu,s)$ is formed from $(\la,s)$ by moving a bead on runner $k_1$ up by $r$ positions and moving a bead on runner $k_2$ down by $r$ positions. By Lemma~\ref{PsiMove}, this occurs if and only if $\Phi_r(\mu,s)$ is formed from $\Phi_r(\la,s)$ by moving a bead on runner $k_1$ of some component up by one position and moving a bead on runner $k_2$ of some component down by one position, that is, if and only if $\Phi_r(\la,s) \xrightarrow{e}_1 \Phi_r(\mu,s)$.
\end{proof}

\begin{lemma}\label{EqSwap2}
Suppose that $\la,\mu \in \Lambda$ and that $\la \sim_e \mu$. Let $s \in \Z$. Then $\eta(\la,s) \xrightarrow{r}_2 \eta(\mu,s)$ if and only if $\Phi_r(\la,s) \xrightarrow{e}_2 \Phi_r(\mu,s)$.  
\end{lemma}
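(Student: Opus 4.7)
The plan is to translate both $\eta(\la,s) \xrightarrow{r}_2 \eta(\mu,s)$ and $\Phi_r(\la,s) \xrightarrow{e}_2 \Phi_r(\mu,s)$ into a common description of the symmetric difference between the $\beta$-sets of $(\la,s)$ and $(\mu,s)$, and then to verify that each relation admits a valid decomposition as the other's two rim-hook moves. Unlike in Lemma~\ref{EqSwap1}, the two operations cannot be expected to share parameters; the same pair $(\la,\mu)$ will be realised by different pairings of the four changed bead-positions in the single abacus.

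For the forward direction, suppose $\eta(\la,s) \xrightarrow{r}_2 \eta(\mu,s)$ with parameters $(i_1,i_2,b_1,b_2,h)$. Since a bead at position $b$ of the $i$-th component of $\eta(\la,s)$ is by construction a bead at $be+i$ of $(\la,s)$, the four positions changed on the single abacus are $b_1e+i_1$, $(b_2+h)e+i_2$ (removed) and $(b_1+h)e+i_1$, $b_2e+i_2$ (added). Writing $b_1=m_1r+j$, $b_2=m_2r+j$, $b_1+h=m_1'r+j'$, $b_2+h=m_2'r+j'$ (with common residues $j,j'$ because $b_1\equiv b_2\bmod r$), the definition of $\psi_k$ places these four positions at $m_1e+i_1$ and $m_2e+i_2$ in component $r-j$ and at $m_1'e+i_1$ and $m_2'e+i_2$ in component $r-j'$ of $\Phi_r$. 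I would then exhibit the $\xrightarrow{e}_2$-move via the \emph{cross-pairing}: take $k_1=r-j$, $k_2=r-j'$, $b_1^{\Phi}=m_1e+i_1$ with $b_1^{\Phi}+h^{\Phi}=m_2e+i_2$ in component $k_1$, and $b_2^{\Phi}=m_1'e+i_1$ with $b_2^{\Phi}+h^{\Phi}=m_2'e+i_2$ in component $k_2$. The two resulting expressions $h^{\Phi}=(m_2-m_1)e+(i_2-i_1)=(m_2'-m_1')e+(i_2-i_1)$ agree because $m_1'-m_1=m_2'-m_2=\lfloor(j+h)/r\rfloor$, and the required congruence $b_1^{\Phi}\equiv b_2^{\Phi}\pmod e$ is automatic since both residues equal $i_1$. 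If $h^{\Phi}$ turns out negative I would simply swap the two pairs. The cases $j=j'$ (giving $k_1=k_2$) and $j\neq j'$ (giving $k_1\neq k_2$) are handled uniformly by this pairing.

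The converse direction runs in reverse and is where the main bookkeeping lies. Given $\Phi_r(\la,s) \xrightarrow{e}_2 \Phi_r(\mu,s)$ with parameters $(k_1,k_2,b_1,b_2,h)$, I would write $b_1=m_1e+i$, $b_2=m_2e+i$, $h=m'e+i'$ and apply Lemma~\ref{PsiMove} in the two cases $i+i'<e$ and $i+i'\geq e$ to compute the four positions $\psi_{k_\alpha}(b_\alpha)$, $\psi_{k_\alpha}(b_\alpha+h)$ that change on the single abacus. These always lie on exactly two runners of $(\la,s)$, namely runner $i$ and runner $(i+i')\bmod e$, so pairing changes by common runner produces an $\xrightarrow{r}_2$-step on $\eta(\la,s)$ with $b_1^{\eta}=(m_1+1)r-k_1$ and $b_2^{\eta}=(m_1+m'+1)r-k_1$ in the first case (or $(m_1+m'+2)r-k_1$ in the second) and $h^{\eta}=(m_2-m_1)r+(k_1-k_2)$. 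The required congruence $b_1^{\eta}\equiv b_2^{\eta}\pmod r$ holds automatically since both are $\equiv -k_1\pmod r$. The main obstacle here is simply the bookkeeping across the two cases of Lemma~\ref{PsiMove} together with the possible sign-switches needed to make $h^{\eta}>0$; the latter is handled, as before, by swapping the roles of $(i_1^{\eta},k_1)$ and $(i_2^{\eta},k_2)$ if necessary.
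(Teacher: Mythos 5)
Your proposal is correct and takes essentially the same route as the paper: both arguments translate the two moves into the four changed bead positions on the single abacus via the maps $\psi_k$, observe that the cross-pairing of these positions realises the other relation's two rim-hook moves, and note that the required congruences ($\bmod\ e$ on one side, $\bmod\ r$ on the other) hold automatically because paired positions share a runner or a row-residue. The only differences are cosmetic — you unify the paper's two cases ($h'+t<r$ versus $h'+t\geq r$) with a single floor-function computation and you make explicit the sign-swap needed when the derived $h$ comes out negative, a point the paper's proof leaves implicit.
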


Before reading the proof, it might be helpful to consider the following example.

\begin{ex}
Let $e=4$ and $r=3$ and suppose that $(\la,s),(\mu,s) \in \A_4$ have abacus diagrams respectively given by
\begin{center}
\begin{tikzpicture}\tikzset{yscale=0.3,xscale=0.3}
\foreach \k in {0,1,2,3} {
\fill [color=brown] (\k-.1,0)--(\k-.1,7) -- (\k+0.1,7)-- (\k+.1,0)--(\k-.1,0);
\fill[color=brown](\k-.1,7.1)--(\k+.1,7.1)--(\k+.1,7.3)--(\k-.1,7.3)--(\k-.1,7.1);
\fill[color=brown](\k-.1,-.1)--(\k+.1,-.1)--(\k+.1,-.3)--(\k-.1,-.3)--(\k-.1,-.3);
\fill[color=brown](\k-.1,-.4)--(\k+.1,-.4)--(\k+.1,-.6)--(\k-.1,-.6)--(\k-.1,-.6);
}
\foreach \k in {2.5,6.5} {
\fill[color=bead] (0,\k) circle (10pt);}
\foreach \k in {3.5,4.5,6.5} {
\fill[color=bead] (1,\k) circle (10pt);}
\foreach \k in {4.5,5.5,6.5} {
\fill[color=bead] (2,\k) circle (10pt);}
\foreach \k in {6.5,5.5,3.5,1.5} {
\fill[color=bead] (3,\k) circle (10pt);}
\fill[color=red] (0,5.5) circle (10pt);
\fill[color=green] (3,0.5) circle (10pt);
\draw(-.5,7)--(3.5,7); 
\node at (4.5,4) {,};
\foreach \a in {0,1,2,3} {
\foreach \k in {1.5,...,4.5}{
\node at (\a,\k)[color=bead]{$-$};};};
\node at (1,5.5)[color=bead]{$-$};
\foreach \a in {0,1,2} { \node at (\a,0.5)[color=bead]{$-$};};
\end{tikzpicture}
\qquad \qquad 
\begin{tikzpicture}\tikzset{yscale=0.3,xscale=0.3}
\foreach \k in {0,1,2,3} {
\fill [color=brown] (\k-.1,0)--(\k-.1,7) -- (\k+0.1,7)-- (\k+.1,0)--(\k-.1,0);
\fill[color=brown](\k-.1,7.1)--(\k+.1,7.1)--(\k+.1,7.3)--(\k-.1,7.3)--(\k-.1,7.1);
\fill[color=brown](\k-.1,-.1)--(\k+.1,-.1)--(\k+.1,-.3)--(\k-.1,-.3)--(\k-.1,-.3);
\fill[color=brown](\k-.1,-.4)--(\k+.1,-.4)--(\k+.1,-.6)--(\k-.1,-.6)--(\k-.1,-.6);
}
\foreach \k in {2.5,6.5} {
\fill[color=bead] (0,\k) circle (10pt);}
\foreach \k in {3.5,4.5,6.5} {
\fill[color=bead] (1,\k) circle (10pt);}
\foreach \k in {4.5,5.5,6.5} {
\fill[color=bead] (2,\k) circle (10pt);}
\foreach \k in {6.5,5.5,3.5,1.5} {
\fill[color=bead] (3,\k) circle (10pt);}
\fill[color=red] (0,3.5) circle (10pt);
\fill[color=green] (3,2.5) circle (10pt);
\draw(-.5,7)--(3.5,7); 
\node at (4.5,4) {,};
\foreach \a in {0,1,2,3} {
\foreach \k in {0.5,1.5,4.5,5.5}{
\node at (\a,\k)[color=bead]{$-$};};};
\node at (1,2.5)[color=bead]{$-$}; \node at (2,2.5)[color=bead]{$-$}; \node at (2,3.5)[color=bead]{$-$};
\end{tikzpicture}
\end{center}
so that $\eta(\la,s) \xrightarrow{r}_2 \eta(\mu,s)$. Then $\Phi_r(\la,s)$ and $\Phi_r(\mu,s)$ are respectively given by

\begin{center}
\begin{tikzpicture} \tikzset{yscale=0.3,xscale=0.3}
\foreach \k in {0,1,2,3,5,6,7,8,10,11,12,13} {
\fill [color=brown] (\k-.1,2)--(\k-.1,5) -- (\k+0.1,5)-- (\k+.1,2)--(\k-.1,2);
\fill[color=brown](\k-.1,5.1)--(\k+.1,5.1)--(\k+.1,5.3)--(\k-.1,5.3)--(\k-.1,5.1);
\fill[color=brown](\k-.1,1.9)--(\k+.1,1.9)--(\k+.1,1.7)--(\k-.1,1.7)--(\k-.1,1.9);
\fill[color=brown](\k-.1,1.6)--(\k+.1,1.6)--(\k+.1,1.4)--(\k-.1,1.4)--(\k-.1,1.6);
}
\foreach \k in {} {
\fill[color=bead] (0,\k) circle (10pt);}
\foreach \k in {4.5} {
\fill[color=bead] (1,\k) circle (10pt);}
\foreach \k in {4.5} {
\fill[color=bead] (2,\k) circle (10pt);}
\foreach \k in {3.5} {
\fill[color=bead] (3,\k) circle (10pt);}
\foreach \k in {3.5} {
\fill[color=bead] (5,\k) circle (10pt);}
\foreach \k in {} {
  \fill[color=bead] (6,\k) circle (10pt);}
\foreach \k in {4.5} {
  \fill[color=bead] (7,\k) circle (10pt);}
\foreach \k in {4.5} {
  \fill[color=bead] (8,\k) circle (10pt);}
\foreach \k in {4.5} {
  \fill[color=bead] (10,\k) circle (10pt);}
\foreach \k in {3.5,4.5} {
  \fill[color=bead] (11,\k) circle (10pt);}
\foreach \k in {4.5} {
  \fill[color=bead] (12,\k) circle (10pt);}
\foreach \k in {3.5,4.5} {
  \fill[color=bead] (13,\k) circle (10pt);}
\fill[color=red] (5,4.5) circle (10pt);
\fill[color=green] (13,2.5) circle (10pt);
\draw(-.5,5)--(3.5,5); \draw(4.5,5)--(8.5,5); \draw(9.5,5)--(13.5,5); 
\node at (14.5,3){,}; 
\foreach \a in {0,3,6,7,8} {
\node at (\a,4.5)[color=bead]{$-$};};
\foreach \a in {0,1,2,6,7,8,10,12} {
\node at (\a,3.5)[color=bead]{$-$};};
\foreach \a in {0,1,2,3,5,6,7,8,10,11,12} {
\node at (\a,2.5)[color=bead]{$-$};};
\end{tikzpicture}
\qquad \qquad 
\begin{tikzpicture} \tikzset{yscale=0.3,xscale=0.3}
\foreach \k in {0,1,2,3,5,6,7,8,10,11,12,13} {
\fill [color=brown] (\k-.1,2)--(\k-.1,5) -- (\k+0.1,5)-- (\k+.1,2)--(\k-.1,2);
\fill[color=brown](\k-.1,5.1)--(\k+.1,5.1)--(\k+.1,5.3)--(\k-.1,5.3)--(\k-.1,5.1);
\fill[color=brown](\k-.1,1.9)--(\k+.1,1.9)--(\k+.1,1.7)--(\k-.1,1.7)--(\k-.1,1.9);
\fill[color=brown](\k-.1,1.6)--(\k+.1,1.6)--(\k+.1,1.4)--(\k-.1,1.4)--(\k-.1,1.6);
}
\foreach \k in {} {
\fill[color=bead] (0,\k) circle (10pt);}
\foreach \k in {4.5} {
\fill[color=bead] (1,\k) circle (10pt);}
\foreach \k in {4.5} {
\fill[color=bead] (2,\k) circle (10pt);}
\foreach \k in {3.5} {
\fill[color=bead] (3,\k) circle (10pt);}
\foreach \k in {3.5} {
\fill[color=bead] (5,\k) circle (10pt);}
\foreach \k in {} {
  \fill[color=bead] (6,\k) circle (10pt);}
\foreach \k in {4.5} {
  \fill[color=bead] (7,\k) circle (10pt);}
\foreach \k in {4.5} {
  \fill[color=bead] (8,\k) circle (10pt);}
\foreach \k in {4.5} {
  \fill[color=bead] (10,\k) circle (10pt);}
\foreach \k in {3.5,4.5} {
  \fill[color=bead] (11,\k) circle (10pt);}
\foreach \k in {4.5} {
  \fill[color=bead] (12,\k) circle (10pt);}
\foreach \k in {4.5,3.5} {
  \fill[color=bead] (13,\k) circle (10pt);}
\fill[color=red] (10,3.5) circle (10pt);
\fill[color=green] (8,3.5) circle (10pt);
\draw(-.5,5)--(3.5,5); \draw(4.5,5)--(8.5,5); \draw(9.5,5)--(13.5,5); 
\node at (14.5,3){,}; 
\foreach \a in {0,3,5,6,7,8} {
\node at (\a,4.5)[color=bead]{$-$};};
\foreach \a in {0,1,2,6,7,12} {
\node at (\a,3.5)[color=bead]{$-$};};
\foreach \a in {0,1,2,3,5,6,7,8,10,11,12,13} {
\node at (\a,2.5)[color=bead]{$-$};};
\end{tikzpicture}
\end{center}
and we can see that $\Phi_r(\la,s) \xrightarrow{e}_2 \Phi_r(\mu,s)$. The condition that $\eta(\la,s) \xrightarrow{r}_2 \eta(\mu,s)$ ensures that in the second set of pictures if the red bead jumps from component $x$ to component $y$ then green bead jumps from component $y$ to component $x$.  
\end{ex}

\begin{proof}[Proof of Lemma~\ref{EqSwap2}]
Suppose $\eta(\la,s) \xrightarrow{r}_2 \eta(\mu,s)$. Then there exist $0 \leq k_1,k_2 < e$, $b_1,b_2 \in \Z$ and $h>0$ such that $b_1 \equiv b_2 \mod r$ and $(\mu,s)$ is formed from $(\la,s)$ by moving the bead at $z_1=b_1 e+ k_1$ to $(b_1+h)e+k_1$ and the bead at $(b_2+h)e+k_2$ to $z_2=b_2e+k_2$. 
Write $b_1=m_1r+t$ and $b_2=m_2 r+t$ where $0 \leq t <r$.  Suppose that $h=h^\ast r +h'$ where $0 \leq h'<r$.
So $\psi_{r-t}(m_1e+k_1)=z_1$ and $\psi_{r-t}(m_2 e+k_2)=z_2$.
Suppose that $h'+t<r$ so that 
\begin{align*}
z_1 + he & = (b_1+h)e+k_1 \\ & = (m_1r+t+h^\ast r + h')e + k_1 &  \\
& = ((m_1+h^\ast)r + t+h')e+k_1 \\
& = \psi_{r-t-h'}((m_1+h^\ast)e+k_1)
\end{align*}
and this combined with other computations gives us 
\begin{align*}
z_1+he & = \psi_{r-t-h'}((m_1+h^\ast)e+k_1),& z_1 & = \psi_{r-t}(m_1e+k_1),\\
z_2 + he & = \psi_{r-t-h'}((m_2+h^\ast)e+k_2), & z_2 &= \psi_{r-t}(m_2e+k_2).
\end{align*}

Then $\Phi_r(\mu,s)$ is formed from $\Phi_r(\la,s)$ by moving the bead of $\beta$-number $m_1e+k_1$ on component $r-t$ to position $(m_1+h^\ast)e+k_1$ of component $r-t-h'$ and moving the bead at $(m_2+h^\ast)e+k_2$ on component $r-t-h'$ to position $m_2 e+k_2$ on component $r-t$. Or, to put it another way, on abacus $r-t$ we replace a bead at $m_1e+k_1$ with a bead at $m_2e+k_2$ and on abacus $r-t-h'$ we replace a bead at $(m_2+h^\ast)e+k_2$ with a bead at $(m_1+h^\ast)e + k_1$. Thus $\Phi_r(\la,s) \xrightarrow{e}_2 \Phi_r(\mu,s)$. The argument is almost identical if $h'+t \geq r$. 

If $\Phi_r(\la,s) \xrightarrow{e}_2 \Phi_r(\mu,s)$ then reversing the argument above shows that $\eta(\la,s) \xrightarrow{r}_2 \eta(\la,s)$. 
\end{proof}

\begin{theorem}\label{EqEq}
Suppose that $\la,\mu \in \Lambda$ and that $\la \sim_e \mu$. Let $s \in \Z$. 
Then $\eta(\la,s) \ap_r \eta(\mu,s)$ if and only if $\Phi_r(\la,s) \ap_e \Phi_r(\mu,s)$.
\end{theorem}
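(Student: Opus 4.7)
The strategy is to combine Proposition~\ref{GenBlock} with Lemmas~\ref{EqSwap1} and~\ref{EqSwap2}. Both $\ap_r$ on $\A_r^e$ and $\ap_e$ on $\A_e^r$ are generated by their respective elementary moves, and the two lemmas say that under the bijections $\eta$ and $\Phi_r$ these elementary moves correspond step-by-step, provided the partitions involved are $\sim_e$-equivalent. The work is therefore to verify that any chain produced by Proposition~\ref{GenBlock} on one side pulls back through $\eta^{-1}$, respectively $\Psi_r$, to a sequence of partitions each $\sim_e$-equivalent to its neighbour, so that the lemmas can be applied at every stage of the chain.

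For the forward direction, I would fix an $\ap_r$-chain $\eta(\la,s) = Z_0 \xrightarrow{r} Z_1 \xrightarrow{r} \cdots \xrightarrow{r} Z_t = \eta(\mu,s)$ in $\A_r^e$ given by Proposition~\ref{GenBlock}, and write $Z_i = \eta(\la_i,s_i)$ using the bijectivity of $\eta$. Both types of $\xrightarrow{r}$-move permute beads inside the $r$-abacus configurations, so they preserve the charge vector $\bt$; and inspection of the definitions shows they also preserve $|\brho|$, since type~1 removes an $r$-rim hook from one component of $\brho$ and adds one to another, while type~2 does the same for an $h$-rim hook. The invariance of $\bt$ forces $s_i = s$ for all $i$ and, because $\bt$ records the $e$-core, gives $\bar{\la}_i = \bar{\la}_{i+1}$; the invariance of $|\brho|$ gives $\wt(\la_i)=\wt(\la_{i+1})$. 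Hence $\la_i \sim_e \la_{i+1}$ by Proposition~\ref{Whenr1}, so Lemmas~\ref{EqSwap1} and~\ref{EqSwap2} supply $\Phi_r(\la_i,s) \xrightarrow{e} \Phi_r(\la_{i+1},s)$ for each $i$, and $\Phi_r(\la,s) \ap_e \Phi_r(\mu,s)$ follows from a second application of Proposition~\ref{GenBlock}.

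The converse proceeds in mirror image: I would take an $\ap_e$-chain $\Phi_r(\la,s) = Y_0 \xrightarrow{e} \cdots \xrightarrow{e} Y_t = \Phi_r(\mu,s)$ in $\A_e^r$ and set $(\tilde{\la}_i,\tilde{s}_i) = \Psi_r(Y_i)$. Every $\xrightarrow{e}$-move merely swaps beads in the combined configuration underlying $\Psi_r$, so $\tilde{s}_i = s$ throughout; Proposition~\ref{BlockPreserve} applied to each consecutive pair then gives $\tilde{\la}_i \sim_e \tilde{\la}_{i+1}$, and Lemmas~\ref{EqSwap1} and~\ref{EqSwap2} (used in the other direction of their biconditionals) yield $\eta(\tilde{\la}_i,s) \xrightarrow{r} \eta(\tilde{\la}_{i+1},s)$. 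A final appeal to Proposition~\ref{GenBlock} completes the argument. The main point needing a little care is the preservation of the integer $s$ along the chain; this reduces, via Lemma~\ref{PartBij}, to the observation that moving a single bead to an empty position inside an abacus configuration leaves the background charge unchanged.
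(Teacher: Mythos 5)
Your argument is correct and takes essentially the same route as the paper, whose proof simply states that the theorem follows from Proposition~\ref{GenBlock} combined with Lemmas~\ref{EqSwap1} and~\ref{EqSwap2}. The extra detail you supply --- checking that the intermediate terms of each chain preserve $s$, the $e$-core and the weight (hence remain $\sim_e$-equivalent so that the two lemmas apply at every step) --- is exactly the routine verification the paper leaves implicit.
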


\begin{proof}
Using Proposition~\ref{GenBlock}, the result follows immediately from Lemma~\ref{EqSwap1} and Lemma~\ref{EqSwap2}. 
\end{proof}

Proposition~\ref{BlockPreserve} shows that the map $\Psi_r$ sends a block into a single block while its inverse $\Phi_r$ splits a block into a disjoint union of blocks, with this disjoint union determined by Theorem~\ref{EqEq}. 

\begin{ex}
Let $r=2$ and $e=3$. Take $s=16$ and consider $(\la,s) \in \A_3$ given by 
\begin{center}
\begin{tikzpicture}\tikzset{yscale=0.3,xscale=0.3}
\foreach \k in {0,1,2} {
\fill [color=brown] (\k-.1,0)--(\k-.1,7) -- (\k+0.1,7)-- (\k+.1,0)--(\k-.1,0);
\fill[color=brown](\k-.1,7.1)--(\k+.1,7.1)--(\k+.1,7.3)--(\k-.1,7.3)--(\k-.1,7.1);
\fill[color=brown](\k-.1,-.1)--(\k+.1,-.1)--(\k+.1,-.3)--(\k-.1,-.3)--(\k-.1,-.3);
\fill[color=brown](\k-.1,-.4)--(\k+.1,-.4)--(\k+.1,-.6)--(\k-.1,-.6)--(\k-.1,-.4);
}
\foreach \k in {2.5,5.5,6.5} {
\fill[color=bead] (0,\k) circle (10pt);}
\foreach \k in {1.5,2.5,3.5,4.5,5.5,6.5} {
\fill[color=bead] (1,\k) circle (10pt);}
\foreach \k in {0.5,1.5,2.5,3.5,4.5,5.5,6.5} {
\fill[color=bead] (2,\k) circle (10pt);}
\draw(-.5,7) -- (2.5,7);
\node at (3.5,4) {.};
\foreach \a in {0,1,2} {
\foreach \k in {0.5,...,6.5}{
\node at (\a,\k)[color=bead]{$-$};};};
\end{tikzpicture}
\end{center}
The $\sim_3$-equivalence class of $\la$ is in bijection with $\La^{(3)}_2$ via the map $\eta$:
$$\la \sim_3 \mu \iff (\mu,s) \ap_3 (\la,s) \iff \eta(\mu,s) = (\brho,(3,6,7))  \text{ for some } \brho \in \La^{(3)}_2.$$ We consider the $\sim_{2,(3,6,7)}$-equivalence classes, or (equivalently) the $\sim_{2,(1,0,1)}$-equivalence classes of $\La^{(3)}_2$; since we are looking at multipartitions of 2, it is easy to write down the two classes using their residue diagrams:   
\begin{align*}
\Bigg\{ \; & \young(10)\;\emp\;\emp,& &
\young(1,0)\;\emp\;\emp, & &
\emp\;\young(01)\;\emp, & &
\emp\;\young(0,1)\;\emp, \\
&\emp\;\emp\;\young(10)\,, & &
\emp\;\emp\;\young(1,0)\,, & &
\young(1)\;\young(0)\;\emp, & &
\emp\;\young(0)\;\young(1) \; \Bigg\},
\end{align*}
\[\Bigg\{\; \young(1)\;\emp\;\young(1)\; \Bigg\}.\]

Below we give the abacus configuration of each $(\mu,s) \ap_3 (\la,s)$ followed by its image under $\Phi_2$. 

\bigskip
\begin{tikzpicture}\tikzset{yscale=0.3,xscale=0.3}
\draw(2.5,16)--(5.5,16);
\foreach \k in {3,4,5} {
\fill [color=brown] (\k-.1,8)--(\k-.1,16) -- (\k+0.1,16)-- (\k+.1,8)--(\k-.1,8);
\fill[color=brown](\k-.1,16.1)--(\k+.1,16.1)--(\k+.1,16.3)--(\k-.1,16.3)--(\k-.1,16.1);
\fill[color=brown](\k-.1,7.9)--(\k+.1,7.9)--(\k+.1,7.7)--(\k-.1,7.7)--(\k-.1,7.9);
\fill[color=brown](\k-.1,7.6)--(\k+.1,7.6)--(\k+.1,7.4)--(\k-.1,7.4)--(\k-.1,7.6);
}
\foreach \k in {12.5,13.5,15.5} {
\fill[color=bead] (3,\k) circle (10pt);}
\foreach \k in {10.5,11.5,12.5,13.5,14.5,15.5} {
\fill[color=bead] (4,\k) circle (10pt);}
\foreach \k in {9.5,10.5,11.5,12.5,13.5,14.5,15.5} {
\fill[color=bead] (5,\k) circle (10pt);}
\node at (7.5,13.5){$\xrightarrow{\Phi_2}$}; 
\draw(9.5,16)--(12.5,16); \draw(13.5,16)--(16.5,16); 
\foreach \k in {10,11,12,14,15,16} {
\fill [color=brown] (\k-.1,12)--(\k-.1,16) -- (\k+0.1,16)-- (\k+.1,12)--(\k-.1,12);
\fill[color=brown](\k-.1,16.1)--(\k+.1,16.1)--(\k+.1,16.3)--(\k-.1,16.3)--(\k-.1,16.1);
\fill[color=brown](\k-.1,11.9)--(\k+.1,11.9)--(\k+.1,11.7)--(\k-.1,11.7)--(\k-.1,11.9);
\fill[color=brown](\k-.1,11.6)--(\k+.1,11.6)--(\k+.1,11.4)--(\k-.1,11.4)--(\k-.1,11.6);
}
\foreach \k in {14.5} {
\fill[color=bead] (10,\k) circle (10pt);}
\foreach \k in {13.5,14.5,15.5} {
\fill[color=bead] (11,\k) circle (10pt);}
\foreach \k in {13.5,14.5,15.5} {
\fill[color=bead] (12,\k) circle (10pt);}
\foreach \k in {14.5,15.5} {
\fill[color=bead] (14,\k) circle (10pt);}
\foreach \k in {13.5,14.5,15.5} {
\fill[color=bead] (15,\k) circle (10pt);}
\foreach \k in {12.5,13.5,14.5,15.5} {
  \fill[color=bead] (16,\k) circle (10pt);}
\foreach \a in {3,4,5} {
\foreach \k in {8.5,...,15.5}{
\node at (\a,\k)[color=bead]{$-$};};};
\foreach \a in {10,11,12,14,15,16} {
\foreach \k in {12.5,...,15.5}{
\node at (\a,\k)[color=bead]{$-$};};};
\node at (17,13.5){,};
\end{tikzpicture}
\qquad
\begin{tikzpicture}\tikzset{yscale=0.3,xscale=0.3}
\draw(2.5,16)--(5.5,16);
\foreach \k in {3,4,5} {
\fill [color=brown] (\k-.1,8)--(\k-.1,16) -- (\k+0.1,16)-- (\k+.1,8)--(\k-.1,8);
\fill[color=brown](\k-.1,16.1)--(\k+.1,16.1)--(\k+.1,16.3)--(\k-.1,16.3)--(\k-.1,16.1);
\fill[color=brown](\k-.1,7.9)--(\k+.1,7.9)--(\k+.1,7.7)--(\k-.1,7.7)--(\k-.1,7.9);
\fill[color=brown](\k-.1,7.6)--(\k+.1,7.6)--(\k+.1,7.4)--(\k-.1,7.4)--(\k-.1,7.6);
}
\foreach \k in {13.5,14.5,15.5} {
\fill[color=bead] (3,\k) circle (10pt);}
\foreach \k in {9.5,10.5,12.5,13.5,14.5,15.5} {
\fill[color=bead] (4,\k) circle (10pt);}
\foreach \k in {9.5,10.5,11.5,12.5,13.5,14.5,15.5} {
\fill[color=bead] (5,\k) circle (10pt);}
\node at (7.5,13.5){$\xrightarrow{\Phi_2}$}; 
\draw(9.5,16)--(12.5,16); \draw(13.5,16)--(16.5,16); 
\foreach \k in {10,11,12,14,15,16} {
\fill [color=brown] (\k-.1,12)--(\k-.1,16) -- (\k+0.1,16)-- (\k+.1,12)--(\k-.1,12);
\fill[color=brown](\k-.1,16.1)--(\k+.1,16.1)--(\k+.1,16.3)--(\k-.1,16.3)--(\k-.1,16.1);
\fill[color=brown](\k-.1,11.9)--(\k+.1,11.9)--(\k+.1,11.7)--(\k-.1,11.7)--(\k-.1,11.9);
\fill[color=brown](\k-.1,11.6)--(\k+.1,11.6)--(\k+.1,11.4)--(\k-.1,11.4)--(\k-.1,11.6);
}
\foreach \k in {15.5} {
\fill[color=bead] (10,\k) circle (10pt);}
\foreach \k in {13.5,14.5,15.5} {
\fill[color=bead] (11,\k) circle (10pt);}
\foreach \k in {13.5,14.5,15.5} {
\fill[color=bead] (12,\k) circle (10pt);}
\foreach \k in {14.5,15.5} {
\fill[color=bead] (14,\k) circle (10pt);}
\foreach \k in {12.5,14.5,15.5} {
\fill[color=bead] (15,\k) circle (10pt);}
\foreach \k in {12.5,13.5,14.5,15.5} {
  \fill[color=bead] (16,\k) circle (10pt);}
\foreach \a in {3,4,5} {
\foreach \k in {8.5,...,15.5}{
\node at (\a,\k)[color=bead]{$-$};};};
\foreach \a in {10,11,12,14,15,16} {
\foreach \k in {12.5,...,15.5}{
\node at (\a,\k)[color=bead]{$-$};};};
\node at (17,13.5){,};
\end{tikzpicture}
\qquad
\begin{tikzpicture}\tikzset{yscale=0.3,xscale=0.3}
\draw(2.5,16)--(5.5,16);
\foreach \k in {3,4,5} {
\fill [color=brown] (\k-.1,8)--(\k-.1,16) -- (\k+0.1,16)-- (\k+.1,8)--(\k-.1,8);
\fill[color=brown](\k-.1,16.1)--(\k+.1,16.1)--(\k+.1,16.3)--(\k-.1,16.3)--(\k-.1,16.1);
\fill[color=brown](\k-.1,7.9)--(\k+.1,7.9)--(\k+.1,7.7)--(\k-.1,7.7)--(\k-.1,7.9);
\fill[color=brown](\k-.1,7.6)--(\k+.1,7.6)--(\k+.1,7.4)--(\k-.1,7.4)--(\k-.1,7.6);
}
\foreach \k in {13.5,14.5,15.5} {
\fill[color=bead] (3,\k) circle (10pt);}
\foreach \k in {10.5,11.5,12.5,13.5,14.5,15.5} {
\fill[color=bead] (4,\k) circle (10pt);}
\foreach \k in {8.5,9.5,11.5,12.5,13.5,14.5,15.5} {
\fill[color=bead] (5,\k) circle (10pt);}
\node at (7,13.5){$\xrightarrow{\Phi_2}$}; 
\draw(9.5,16)--(12.5,16); \draw(13.5,16)--(16.5,16); 
\foreach \k in {10,11,12,14,15,16} {
\fill [color=brown] (\k-.1,12)--(\k-.1,16) -- (\k+0.1,16)-- (\k+.1,12)--(\k-.1,12);
\fill[color=brown](\k-.1,16.1)--(\k+.1,16.1)--(\k+.1,16.3)--(\k-.1,16.3)--(\k-.1,16.1);
\fill[color=brown](\k-.1,11.9)--(\k+.1,11.9)--(\k+.1,11.7)--(\k-.1,11.7)--(\k-.1,11.9);
\fill[color=brown](\k-.1,11.6)--(\k+.1,11.6)--(\k+.1,11.4)--(\k-.1,11.4)--(\k-.1,11.6);
}
\foreach \k in {15.5} {
\fill[color=bead] (10,\k) circle (10pt);}
\foreach \k in {13.5,14.5,15.5} {
\fill[color=bead] (11,\k) circle (10pt);}
\foreach \k in {12.5,14.5,15.5} {
\fill[color=bead] (12,\k) circle (10pt);}
\foreach \k in {14.5,15.5} {
\fill[color=bead] (14,\k) circle (10pt);}
\foreach \k in {13.5,14.5,15.5} {
\fill[color=bead] (15,\k) circle (10pt);}
\foreach \k in {12.5,13.5,14.5,15.5} {
  \fill[color=bead] (16,\k) circle (10pt);}
\foreach \a in {3,4,5} {
\foreach \k in {8.5,...,15.5}{
\node at (\a,\k)[color=bead]{$-$};};};
\foreach \a in {10,11,12,14,15,16} {
\foreach \k in {12.5,...,15.5}{
\node at (\a,\k)[color=bead]{$-$};};};
\node at (17,13.5){,};
\end{tikzpicture}

\bigskip
\begin{tikzpicture}\tikzset{yscale=0.3,xscale=0.3}
\draw(2.5,16)--(5.5,16);
\foreach \k in {3,4,5} {
\fill [color=brown] (\k-.1,7)--(\k-.1,16) -- (\k+0.1,16)-- (\k+.1,7)--(\k-.1,7);
\fill[color=brown](\k-.1,16.1)--(\k+.1,16.1)--(\k+.1,16.3)--(\k-.1,16.3)--(\k-.1,16.1);
\fill[color=brown](\k-.1,6.9)--(\k+.1,6.9)--(\k+.1,6.7)--(\k-.1,6.7)--(\k-.1,6.9);
\fill[color=brown](\k-.1,6.6)--(\k+.1,6.6)--(\k+.1,6.4)--(\k-.1,6.4)--(\k-.1,6.6);
}
\foreach \k in {11.5,14.5,15.5} {
\fill[color=bead] (3,\k) circle (10pt);}
\foreach \k in {10.5,11.5,12.5,13.5,14.5,15.5} {
\fill[color=bead] (4,\k) circle (10pt);}
\foreach \k in {9.5,10.5,11.5,12.5,13.5,14.5,15.5} {
\fill[color=bead] (5,\k) circle (10pt);}
\node at (7.5,13.5){$\xrightarrow{\Phi_2}$}; 
\draw(9.5,16)--(12.5,16); \draw(13.5,16)--(16.5,16); 
\foreach \k in {10,11,12,14,15,16} {
\fill [color=brown] (\k-.1,11)--(\k-.1,16) -- (\k+0.1,16)-- (\k+.1,11)--(\k-.1,11);
\fill[color=brown](\k-.1,16.1)--(\k+.1,16.1)--(\k+.1,16.3)--(\k-.1,16.3)--(\k-.1,16.1);
\fill[color=brown](\k-.1,10.9)--(\k+.1,10.9)--(\k+.1,10.7)--(\k-.1,10.7)--(\k-.1,10.9);
\fill[color=brown](\k-.1,10.6)--(\k+.1,10.6)--(\k+.1,10.4)--(\k-.1,10.4)--(\k-.1,10.6);
}
\foreach \k in {15.5} {
\fill[color=bead] (10,\k) circle (10pt);}
\foreach \k in {13.5,14.5,15.5} {
\fill[color=bead] (11,\k) circle (10pt);}
\foreach \k in {13.5,14.5,15.5} {
\fill[color=bead] (12,\k) circle (10pt);}
\foreach \k in {13.5,15.5} {
\fill[color=bead] (14,\k) circle (10pt);}
\foreach \k in {13.5,14.5,15.5} {
\fill[color=bead] (15,\k) circle (10pt);}
\foreach \k in {12.5,13.5,14.5,15.5} {
  \fill[color=bead] (16,\k) circle (10pt);}
\foreach \a in {3,4,5} {
\foreach \k in {7.5,...,15.5}{
\node at (\a,\k)[color=bead]{$-$};};};
\foreach \a in {10,11,12,14,15,16} {
\foreach \k in {11.5,12.5,...,15.5}{
\node at (\a,\k)[color=bead]{$-$};};};
\node at (17,13.5){,};
\end{tikzpicture}
\qquad
\begin{tikzpicture}\tikzset{yscale=0.3,xscale=0.3}
\draw(2.5,16)--(5.5,16);
\foreach \k in {3,4,5} {
\fill [color=brown] (\k-.1,7)--(\k-.1,16) -- (\k+0.1,16)-- (\k+.1,7)--(\k-.1,7);
\fill[color=brown](\k-.1,16.1)--(\k+.1,16.1)--(\k+.1,16.3)--(\k-.1,16.3)--(\k-.1,16.1);
\fill[color=brown](\k-.1,6.9)--(\k+.1,6.9)--(\k+.1,6.7)--(\k-.1,6.7)--(\k-.1,6.9);
\fill[color=brown](\k-.1,6.6)--(\k+.1,6.6)--(\k+.1,6.4)--(\k-.1,6.4)--(\k-.1,6.6);
}
\foreach \k in {13.5,14.5,15.5} {
\fill[color=bead] (3,\k) circle (10pt);}
\foreach \k in {8.5,11.5,12.5,13.5,14.5,15.5} {
\fill[color=bead] (4,\k) circle (10pt);}
\foreach \k in {9.5,10.5,11.5,12.5,13.5,14.5,15.5} {
\fill[color=bead] (5,\k) circle (10pt);}
\node at (7.5,13.5){$\xrightarrow{\Phi_2}$}; 
\draw(9.5,16)--(12.5,16); \draw(13.5,16)--(16.5,16); 
\foreach \k in {10,11,12,14,15,16} {
\fill [color=brown] (\k-.1,11)--(\k-.1,16) -- (\k+0.1,16)-- (\k+.1,11)--(\k-.1,11);
\fill[color=brown](\k-.1,16.1)--(\k+.1,16.1)--(\k+.1,16.3)--(\k-.1,16.3)--(\k-.1,16.1);
\fill[color=brown](\k-.1,10.9)--(\k+.1,10.9)--(\k+.1,10.7)--(\k-.1,10.7)--(\k-.1,10.9);
\fill[color=brown](\k-.1,10.6)--(\k+.1,10.6)--(\k+.1,10.4)--(\k-.1,10.4)--(\k-.1,10.6);
}
\foreach \k in {15.5} {
\fill[color=bead] (10,\k) circle (10pt);}
\foreach \k in {12.5,14.5,15.5} {
\fill[color=bead] (11,\k) circle (10pt);}
\foreach \k in {13.5,14.5,15.5} {
\fill[color=bead] (12,\k) circle (10pt);}
\foreach \k in {14.5,15.5} {
\fill[color=bead] (14,\k) circle (10pt);}
\foreach \k in {13.5,14.5,15.5} {
\fill[color=bead] (15,\k) circle (10pt);}
\foreach \k in {12.5,13.5,14.5,15.5} {
  \fill[color=bead] (16,\k) circle (10pt);}
\foreach \a in {3,4,5} {
\foreach \k in {7.5,...,15.5}{
\node at (\a,\k)[color=bead]{$-$};};};
\foreach \a in {10,11,12,14,15,16} {
\foreach \k in {11.5,...,15.5}{
\node at (\a,\k)[color=bead]{$-$};};};
\node at (17,13.5){,};
\end{tikzpicture}
\qquad
\begin{tikzpicture}\tikzset{yscale=0.3,xscale=0.3}
\draw(2.5,16)--(5.5,16);
\foreach \k in {3,4,5} {
\fill [color=brown] (\k-.1,7)--(\k-.1,16) -- (\k+0.1,16)-- (\k+.1,7)--(\k-.1,7);
\fill[color=brown](\k-.1,16.1)--(\k+.1,16.1)--(\k+.1,16.3)--(\k-.1,16.3)--(\k-.1,16.1);
\fill[color=brown](\k-.1,6.9)--(\k+.1,6.9)--(\k+.1,6.7)--(\k-.1,6.7)--(\k-.1,6.9);
\fill[color=brown](\k-.1,6.6)--(\k+.1,6.6)--(\k+.1,6.4)--(\k-.1,6.4)--(\k-.1,6.6);
}
\foreach \k in {13.5,14.5,15.5} {
\fill[color=bead] (3,\k) circle (10pt);}
\foreach \k in {10.5,11.5,12.5,13.5,14.5,15.5} {
\fill[color=bead] (4,\k) circle (10pt);}
\foreach \k in {7.5,10.5,11.5,12.5,13.5,14.5,15.5} {
\fill[color=bead] (5,\k) circle (10pt);}
\node at (7,13.5){$\xrightarrow{\Phi_2}$}; 
\draw(9.5,16)--(12.5,16); \draw(13.5,16)--(16.5,16); 
\foreach \k in {10,11,12,14,15,16} {
\fill [color=brown] (\k-.1,11)--(\k-.1,16) -- (\k+0.1,16)-- (\k+.1,11)--(\k-.1,11);
\fill[color=brown](\k-.1,16.1)--(\k+.1,16.1)--(\k+.1,16.3)--(\k-.1,16.3)--(\k-.1,16.1);
\fill[color=brown](\k-.1,10.9)--(\k+.1,10.9)--(\k+.1,10.7)--(\k-.1,10.7)--(\k-.1,10.9);
\fill[color=brown](\k-.1,10.6)--(\k+.1,10.6)--(\k+.1,10.4)--(\k-.1,10.4)--(\k-.1,10.6);
}
\foreach \k in {15.5} {
\fill[color=bead] (10,\k) circle (10pt);}
\foreach \k in {13.5,14.5,15.5} {
\fill[color=bead] (11,\k) circle (10pt);}
\foreach \k in {13.5,14.5,15.5} {
\fill[color=bead] (12,\k) circle (10pt);}
\foreach \k in {14.5,15.5} {
\fill[color=bead] (14,\k) circle (10pt);}
\foreach \k in {13.5,14.5,15.5} {
\fill[color=bead] (15,\k) circle (10pt);}
\foreach \k in {11.5,13.5,14.5,15.5} {
  \fill[color=bead] (16,\k) circle (10pt);}
\foreach \a in {3,4,5} {
\foreach \k in {7.5,...,15.5}{
\node at (\a,\k)[color=bead]{$-$};};};
\foreach \a in {10,11,12,14,15,16} {
\foreach \k in {11.5,...,15.5}{
\node at (\a,\k)[color=bead]{$-$};};};
\node at (17,13.5){,};
\end{tikzpicture}

\bigskip
\begin{tikzpicture}\tikzset{yscale=0.3,xscale=0.3}
\draw(2.5,16)--(5.5,16);
\foreach \k in {3,4,5} {
\fill [color=brown] (\k-.1,8)--(\k-.1,16) -- (\k+0.1,16)-- (\k+.1,8)--(\k-.1,8);
\fill[color=brown](\k-.1,16.1)--(\k+.1,16.1)--(\k+.1,16.3)--(\k-.1,16.3)--(\k-.1,16.1);
\fill[color=brown](\k-.1,7.9)--(\k+.1,7.9)--(\k+.1,7.7)--(\k-.1,7.7)--(\k-.1,7.9);
\fill[color=brown](\k-.1,7.6)--(\k+.1,7.6)--(\k+.1,7.4)--(\k-.1,7.4)--(\k-.1,7.6);
}
\foreach \k in {12.5,14.5,15.5} {
\fill[color=bead] (3,\k) circle (10pt);}
\foreach \k in {9.5,11.5,12.5,13.5,14.5,15.5} {
\fill[color=bead] (4,\k) circle (10pt);}
\foreach \k in {9.5,10.5,11.5,12.5,13.5,14.5,15.5} {
\fill[color=bead] (5,\k) circle (10pt);}
\node at (7.5,13.5){$\xrightarrow{\Phi_2}$}; 
\draw(9.5,16)--(12.5,16); \draw(13.5,16)--(16.5,16); 
\foreach \k in {10,11,12,14,15,16} {
\fill [color=brown] (\k-.1,12)--(\k-.1,16) -- (\k+0.1,16)-- (\k+.1,12)--(\k-.1,12);
\fill[color=brown](\k-.1,16.1)--(\k+.1,16.1)--(\k+.1,16.3)--(\k-.1,16.3)--(\k-.1,16.1);
\fill[color=brown](\k-.1,11.9)--(\k+.1,11.9)--(\k+.1,11.7)--(\k-.1,11.7)--(\k-.1,11.9);
\fill[color=brown](\k-.1,11.6)--(\k+.1,11.6)--(\k+.1,11.4)--(\k-.1,11.4)--(\k-.1,11.6);
}
\foreach \k in {14.5,15.5} {
\fill[color=bead] (10,\k) circle (10pt);}
\foreach \k in {14.5,15.5} {
\fill[color=bead] (11,\k) circle (10pt);}
\foreach \k in {13.5,14.5,15.5} {
\fill[color=bead] (12,\k) circle (10pt);}
\foreach \k in {15.5} {
\fill[color=bead] (14,\k) circle (10pt);}
\foreach \k in {12.5,13.5,14.5,15.5} {
\fill[color=bead] (15,\k) circle (10pt);}
\foreach \k in {12.5,13.5,14.5,15.5} {
  \fill[color=bead] (16,\k) circle (10pt);}
\foreach \a in {3,4,5} {
\foreach \k in {8.5,...,15.5}{
\node at (\a,\k)[color=bead]{$-$};};};
\foreach \a in {10,11,12,14,15,16} {
\foreach \k in {12.5,...,15.5}{
\node at (\a,\k)[color=bead]{$-$};};};
\node at (17,13.5){,};
\end{tikzpicture}
\qquad
\begin{tikzpicture}\tikzset{yscale=0.3,xscale=0.3}
\draw(2.5,16)--(5.5,16);
\foreach \k in {3,4,5} {
\fill [color=brown] (\k-.1,8)--(\k-.1,16) -- (\k+0.1,16)-- (\k+.1,8)--(\k-.1,8);
\fill[color=brown](\k-.1,16.1)--(\k+.1,16.1)--(\k+.1,16.3)--(\k-.1,16.3)--(\k-.1,16.1);
\fill[color=brown](\k-.1,7.9)--(\k+.1,7.9)--(\k+.1,7.7)--(\k-.1,7.7)--(\k-.1,7.9);
\fill[color=brown](\k-.1,7.6)--(\k+.1,7.6)--(\k+.1,7.4)--(\k-.1,7.4)--(\k-.1,7.6);
}
\foreach \k in {13.5,14.5,15.5} {
\fill[color=bead] (3,\k) circle (10pt);}
\foreach \k in {9.5,11.5,12.5,13.5,14.5,15.5} {
\fill[color=bead] (4,\k) circle (10pt);}
\foreach \k in {8.5,10.5,11.5,12.5,13.5,14.5,15.5} {
\fill[color=bead] (5,\k) circle (10pt);}
\node at (7.5,13.5){$\xrightarrow{\Phi_2}$}; 
\draw(9.5,16)--(12.5,16); \draw(13.5,16)--(16.5,16); 
\foreach \k in {10,11,12,14,15,16} {
\fill [color=brown] (\k-.1,12)--(\k-.1,16) -- (\k+0.1,16)-- (\k+.1,12)--(\k-.1,12);
\fill[color=brown](\k-.1,16.1)--(\k+.1,16.1)--(\k+.1,16.3)--(\k-.1,16.3)--(\k-.1,16.1);
\fill[color=brown](\k-.1,11.9)--(\k+.1,11.9)--(\k+.1,11.7)--(\k-.1,11.7)--(\k-.1,11.9);
\fill[color=brown](\k-.1,11.6)--(\k+.1,11.6)--(\k+.1,11.4)--(\k-.1,11.4)--(\k-.1,11.6);
}
\foreach \k in {15.5} {
\fill[color=bead] (10,\k) circle (10pt);}
\foreach \k in {14.5,15.5} {
\fill[color=bead] (11,\k) circle (10pt);}
\foreach \k in {12.5,13.5,14.5,15.5} {
\fill[color=bead] (12,\k) circle (10pt);}
\foreach \k in {14.5,15.5} {
\fill[color=bead] (14,\k) circle (10pt);}
\foreach \k in {12.5,13.5,14.5,15.5} {
\fill[color=bead] (15,\k) circle (10pt);}
\foreach \k in {13.5,14.5,15.5} {
  \fill[color=bead] (16,\k) circle (10pt);}
\foreach \a in {3,4,5} {
\foreach \k in {8.5,...,15.5}{
\node at (\a,\k)[color=bead]{$-$};};};
\foreach \a in {10,11,12,14,15,16} {
\foreach \k in {12.5,...,15.5}{
\node at (\a,\k)[color=bead]{$-$};};};
\node at (17,13.5){,};
\end{tikzpicture}
\qquad
\begin{tikzpicture}\tikzset{yscale=0.3,xscale=0.3}
\draw(2.5,16)--(5.5,16);
\foreach \k in {3,4,5} {
\fill [color=brown] (\k-.1,8)--(\k-.1,16) -- (\k+0.1,16)-- (\k+.1,8)--(\k-.1,8);
\fill[color=brown](\k-.1,16.1)--(\k+.1,16.1)--(\k+.1,16.3)--(\k-.1,16.3)--(\k-.1,16.1);
\fill[color=brown](\k-.1,7.9)--(\k+.1,7.9)--(\k+.1,7.7)--(\k-.1,7.7)--(\k-.1,7.9);
\fill[color=brown](\k-.1,7.6)--(\k+.1,7.6)--(\k+.1,7.4)--(\k-.1,7.4)--(\k-.1,7.6);
}
\foreach \k in {12.5,14.5,15.5} {
\fill[color=bead] (3,\k) circle (10pt);}
\foreach \k in {10.5,11.5,12.5,13.5,14.5,15.5} {
\fill[color=bead] (4,\k) circle (10pt);}
\foreach \k in {8.5,10.5,11.5,12.5,13.5,14.5,15.5} {
\fill[color=bead] (5,\k) circle (10pt);}
\node at (7,13.5){$\xrightarrow{\Phi_2}$}; 
\draw(9.5,16)--(12.5,16); \draw(13.5,16)--(16.5,16); 
\foreach \k in {10,11,12,14,15,16} {
\fill [color=brown] (\k-.1,12)--(\k-.1,16) -- (\k+0.1,16)-- (\k+.1,12)--(\k-.1,12);
\fill[color=brown](\k-.1,16.1)--(\k+.1,16.1)--(\k+.1,16.3)--(\k-.1,16.3)--(\k-.1,16.1);
\fill[color=brown](\k-.1,11.9)--(\k+.1,11.9)--(\k+.1,11.7)--(\k-.1,11.7)--(\k-.1,11.9);
\fill[color=brown](\k-.1,11.6)--(\k+.1,11.6)--(\k+.1,11.4)--(\k-.1,11.4)--(\k-.1,11.6);
}
\foreach \k in {14.5,15.5} {
\fill[color=bead] (10,\k) circle (10pt);}
\foreach \k in {13.5,14.5,15.5} {
\fill[color=bead] (11,\k) circle (10pt);}
\foreach \k in {12.5,13.5,14.5,15.5} {
\fill[color=bead] (12,\k) circle (10pt);}
\foreach \k in {15.5} {
\fill[color=bead] (14,\k) circle (10pt);}
\foreach \k in {13.5,14.5,15.5} {
\fill[color=bead] (15,\k) circle (10pt);}
\foreach \k in {13.5,14.5,15.5} {
  \fill[color=bead] (16,\k) circle (10pt);}
\foreach \a in {3,4,5} {
\foreach \k in {8.5,...,15.5}{
\node at (\a,\k)[color=bead]{$-$};};};
\foreach \a in {10,11,12,14,15,16} {
\foreach \k in {12.5,...,15.5}{
\node at (\a,\k)[color=bead]{$-$};};};
\node at (17,13.5){,};
\end{tikzpicture}

Thus we have listed the elements of two of the $\ap_3$-equivalence classes of $\A_e^2$; equivalently we've generated all the multipartitions corresponding to Specht modules in two blocks, the first for the algebra $\mathcal{H}_{2,14}(q,{\bf Q})$ where $1+q+q^2=0$ and ${\bf Q}=(q^7,q^9) = (q,1)$; and the second for the algebra $\mathcal{H}_{2,11}(q,{\bf Q})$ where $1+q+q^2=0$ and ${\bf Q}=(q^9,q^7)=(1,q)$. 
\end{ex}

\begin{corollary}
Let $(\bla,\bs) \in \A_e^r$. Then the map $ \eta\circ \Phi_r: \A_e^r \rightarrow \A_r^e$ is a bijection between the $\ap_e$-equivalence class of $(\bla,\bs)$ and the $\ap_r$-equivalence class of $\eta(\Phi_r(\bla,\bs))$.
\end{corollary}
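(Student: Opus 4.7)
The plan is to reduce the statement to Theorem~\ref{EqEq}. Reading $\Phi_r$ in the statement as $\Psi_r$ (the only interpretation that type-checks, since $\Phi_r$ has domain $\A_e$), I first observe that $\Psi_r\colon \A_e^r\to\A_e$ is a bijection and that the map $\eta$ is a bijection $\A_e\to\La^{(e)}\times\Z^e$; via the identification of $\La^{(e)}\times\Z^e$ with $\A_r^e$ from Corollary~\ref{PartBijC} applied componentwise, $\eta\circ\Psi_r$ is a bijection $\A_e^r\to\A_r^e$. It therefore suffices to show that this bijection carries the $\ap_e$-class of $(\bla,\bs)$ onto the $\ap_r$-class of $\eta(\Psi_r(\bla,\bs))$.

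For the forward containment, let $(\bmu,\bs)\ap_e(\bla,\bs)$ in $\A_e^r$. By Proposition~\ref{BlockPreserve}, $\Psi_r(\bmu,\bs)\ap_e\Psi_r(\bla,\bs)$ in $\A_e$; writing these as $(\mu,s)$ and $(\la,s)$ we get $\la\sim_e\mu$. Theorem~\ref{EqEq} then gives $\eta(\mu,s)\ap_r\eta(\la,s)$, which is the desired membership.

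For the reverse containment, suppose $(\brho,\bt)\ap_r\eta(\la,s)$ where $(\la,s)=\Psi_r(\bla,\bs)$, and write $(\brho,\bt)=\eta(\mu,s')$ for the unique $(\mu,s')\in\A_e$. The $\ap_r$-relation forces the $\bt$-coordinate of $\eta(\mu,s')$ to match that of $\eta(\la,s)$, so $s'=s$ and $\bar\mu=\bar\la$. Moreover, equal residue multisets have equal cardinality, so $|\brho|$ equals the size of the $e$-quotient of $(\la,s)$, i.e.\ $\wt(\mu)=\wt(\la)$. Proposition~\ref{Whenr1} then yields $\la\sim_e\mu$. Now Theorem~\ref{EqEq} applies and gives $\Phi_r(\mu,s)\ap_e\Phi_r(\la,s)=(\bla,\bs)$; thus $(\bmu,\bs):=\Phi_r(\mu,s)$ lies in the $\ap_e$-class of $(\bla,\bs)$ and satisfies $(\eta\circ\Psi_r)(\bmu,\bs)=(\brho,\bt)$.

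The only delicate point is verifying the hypothesis $\la\sim_e\mu$ required to invoke Theorem~\ref{EqEq} in the reverse direction; this is a quick unpacking of what $\ap_r$ means on $\A_r^e$ (same $\bt$-coordinate plus same residue multiset) combined with Nakayama (Proposition~\ref{Whenr1}). After that the result is essentially a composition of bijections controlled by Theorem~\ref{EqEq}.
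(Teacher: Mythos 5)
Your proof is correct and takes essentially the same route as the paper, which states this corollary as an immediate consequence of Proposition~\ref{BlockPreserve} (the forward containment) and Theorem~\ref{EqEq} together with the characterisation of $\ap_e$ on $\A_e$ via $\eta$ (the reverse containment), exactly as you have written it out. You are also right that the $\Phi_r$ in the statement must be read as $\Psi_r$ for the composition to type-check.
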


This corollary gives a natural bijection between Specht modules indexed by $r$-multipartitions in a block of one Ariki-Koike algebra (which has quantum characteristic $e$) and Specht modules indexed by $e$-multipartitions in a block of another Ariki-Koike algebra (which has quantum characteristic $r$). In general, there is no algebraic equivalence between the blocks. 

\begin{ex}
  Let $e=r=3$ and $\mc=(0,0,0)$. Let $\R$ be the $\ap_3$-equivalence class of $\A^3_3$ consisting of the multipartitions
  \begin{align*}
  & (((1),\emp,\emp)),\mc), &  & ((\emp,(1),\emp),\mc) &  &  ((\emp,\emp,(1)),\mc). 
  \end{align*}
  Applying the map $\eta \circ \Phi_3$ to the each of the elements of $\R$ respectively gives a $\ap_3$-equivalence class $\R'$ consisting of the multipartitions
\begin{align*}
  & (((2),\emp,\emp)),(1,0,-1)), &  & (((1),\emp,(1)),(1,0,-1)) &  &  ((\emp,\emp,(1^2)),(1,0,-1)). 
\end{align*}
Following the notation of Section~\ref{Blocks}, we let $\tilde{\R}$ and $\tilde{\R}'$ denote the blocks of the respective Ariki-Koike algebras corresponding to $\R$ and $\R'$. 
Then (up to isomorphism) there is one simple module lying in the block $\tilde{\R}$, but two simple modules lying in the block $\tilde{\R}'$. 
(To see how to determine which multipartitions index simple modules, we refer the reader to~\cite[Definition~2.23]{Mathas:AKSurvey}.) 
\end{ex}

It would be interesting to consider when some kind of algebraic equivalence does occur.
In particular, we would like to know how our bijection relates to the level-rank duality proposed by Chuang and Miyachi~\cite{CM} and studied in~\cite[Section~6.3]{SVV}. 
From a computational point of view, our bijection can come in useful when trying to generate the multipartitions in a block.  

\begin{ex}
Take $e=3$ and $r=2$ and let $\bla=((2,2,1,1),(3,1^5))$ and $\mc=(1,0)$.
Suppose that we want to generate the $\sd$-equivalence class of $\bla$.
Choose $\bs=(7,9)$ so that $(\bla,\bs)$ has abacus configuration 
\begin{center} 
\begin{tikzpicture}\tikzset{yscale=0.3,xscale=0.3}
\foreach \k in {0,1,2,4,5,6} {
\fill [color=brown] (\k-.1,3)--(\k-.1,7) -- (\k+0.1,7)-- (\k+.1,3)--(\k-.1,3);
\fill[color=brown](\k-.1,7.1)--(\k+.1,7.1)--(\k+.1,7.3)--(\k-.1,7.3)--(\k-.1,7.1);
\fill[color=brown](\k-.1,2.9)--(\k+.1,2.9)--(\k+.1,2.7)--(\k-.1,2.7)--(\k-.1,2.9);
\fill[color=brown](\k-.1,2.6)--(\k+.1,2.6)--(\k+.1,2.4)--(\k-.1,2.4)--(\k-.1,2.6);
}
\foreach \k in {6.5} {
\fill[color=bead] (0,\k) circle (10pt);}
\foreach \k in {4.5,5.5,6.5} {
\fill[color=bead] (1,\k) circle (10pt);}
\foreach \k in {4.5,5.5,6.5} {
\fill[color=bead] (2,\k) circle (10pt);}
\foreach \k in {4.5,6.5} {
\fill[color=bead] (4,\k) circle (10pt);}
\foreach \k in {4.5,5.5,6.5} {
\fill[color=bead] (5,\k) circle (10pt);}
\foreach \k in {3.5,4.5,5.5,6.5} {
  \fill[color=bead] (6,\k) circle (10pt);}
\draw(-.5,7)--(2.5,7); \draw(3.5,7)--(6.5,7);
\foreach \a in {0,1,2,4,5,6} {
\foreach \k in {3.5,...,6.5}{
\node at (\a,\k)[color=bead]{$-$};};};
\node at (7,5){.};
\end{tikzpicture}
\end{center}
As we have seen above, $\eta(\Phi_2((\bla,\bs)))= (((2),\emp,\emp),(3,6,7))$ and there is a bijection between the multipartitions in the block of $(((2,\emp,\emp))$ with quantum characteristic 2 and multicharge $(1,0,1)$ and the multipartitions in the block we are trying to generate. However, generating multipartitions in $\La^{(3)}_2$ is easier than generating multipartitions in $\La^{(2)}_{14}$.  
\end{ex}

\section{Rouquier multipartitions} \label{RouquierMP}
\subsection{Rouquier multipartitions and Uglov's map}
Suppose that $(\la,s)\in \A_e$ and that $$\eta(\la,s) = ((\rho_0,\rho_1,\ldots,\rho_{e-1}),(t_0,t_1,\ldots,t_{e-1})).$$ 
We say that $(\la,s)$ is a Rouquier partition if 
\[\wt(\la) \leq t_{i+1}-t_i+1 \quad \text{for all} \quad 0 \leq i < e-1.\] 

Intuitively, we can think of $(\la,s)$ as being a Rouquier partition if the abacus configuration of its core (with respect to $s$) has the property that the difference between the number of beads on runner $i+1$ and runner $i$ is at least $\wt(\la)-1$. However this is not well-defined, since our convention is to have infinitely many beads on each runner. We say that $(\bla,\bs) \in \A_e^r$ is a Rouquier multipartition if each component $(\la^{(k)},s_k)$ is a Rouquier partition for $1 \leq k \leq r$. Say that a $\ap_e$-equivalence class $\R \subset \A_e^{r}$ is a Rouquier block if $(\bla,\bs)$ is a Rouquier multipartition for all $(\bla,\bs) \in \R$. 

On one hand, this is a perfectly reasonable definition. On the other hand, it is not so satisfactory because without writing out all the elements of $\R$ it seems difficult to tell whether or not it is a Rouquier block.

\begin{ex}
Suppose that $(\bla,\bs)$ and $(\bmu,\bs)$ have abacus configurations respectively given by 
\begin{center}
\begin{tikzpicture}\tikzset{yscale=0.3,xscale=0.3}
\foreach \k in {0,1,2,3} {
\fill [color=brown] (\k-.1,-2)--(\k-.1,7) -- (\k+0.1,7)-- (\k+.1,-2)--(\k-.1,-2);
\fill[color=brown](\k-.1,7.1)--(\k+.1,7.1)--(\k+.1,7.3)--(\k-.1,7.3)--(\k-.1,7.1);
\fill[color=brown](\k-.1,-2.1)--(\k+.1,-2.1)--(\k+.1,-2.3)--(\k-.1,-2.3)--(\k-.1,-2.3);
\fill[color=brown](\k-.1,-2.4)--(\k+.1,-2.4)--(\k+.1,-2.6)--(\k-.1,-2.6)--(\k-.1,-2.6);
}
\foreach \k in {5.5,6.5} {
\fill[color=bead] (0,\k) circle (10pt);}
\foreach \k in {1.5,4.5,5.5,6.5} {
\fill[color=bead] (1,\k) circle (10pt);}
\foreach \k in {2.5,3.5,4.5,5.5,6.5} {
\fill[color=bead] (2,\k) circle (10pt);}
\foreach \k in {6.5,5.5,4.5,3.5,2.5,1.5,0.5} {
\fill[color=bead] (3,\k) circle (10pt);}
\draw(-.5,7)--(3.5,7); 
\foreach \a in {0,1,2,3} {
\foreach \k in {-1.5,...,6.5}{
\node at (\a,\k)[color=bead]{$-$};};};
\end{tikzpicture}
\; \,
\begin{tikzpicture}\tikzset{yscale=0.3,xscale=0.3}
\foreach \k in {0,1,2,3} {
\fill [color=brown] (\k-.1,-2)--(\k-.1,7) -- (\k+0.1,7)-- (\k+.1,-2)--(\k-.1,-2);
\fill[color=brown](\k-.1,7.1)--(\k+.1,7.1)--(\k+.1,7.3)--(\k-.1,7.3)--(\k-.1,7.1);
\fill[color=brown](\k-.1,-2.1)--(\k+.1,-2.1)--(\k+.1,-2.3)--(\k-.1,-2.3)--(\k-.1,-2.1);
\fill[color=brown](\k-.1,-2.4)--(\k+.1,-2.4)--(\k+.1,-2.6)--(\k-.1,-2.6)--(\k-.1,-2.6);
}
\foreach \k in {6.5} {
\fill[color=bead] (0,\k) circle (10pt);}
\foreach \k in {3.5,4.5,5.5,6.5} {
\fill[color=bead] (1,\k) circle (10pt);}
\foreach \k in {1.5,2.5,3.5,4.5,5.5,6.5} {
\fill[color=bead] (2,\k) circle (10pt);}
\foreach \k in {6.5,5.5,4.5,3.5,2.5,1.5,0.5,-0.5,-1.5} {
\fill[color=bead] (3,\k) circle (10pt);}
\draw(-.5,7)--(3.5,7); 
\node at (4.5,4) {,};
\foreach \a in {0,1,2,3} {
\foreach \k in {-1.5,...,6.5}{
\node at (\a,\k)[color=bead]{$-$};};};
\end{tikzpicture}
\qquad \qquad \qquad 
\begin{tikzpicture}\tikzset{yscale=0.3,xscale=0.3}
\foreach \k in {0,1,2,3} {
\fill [color=brown] (\k-.1,-2)--(\k-.1,7) -- (\k+0.1,7)-- (\k+.1,-2)--(\k-.1,-2);
\fill[color=brown](\k-.1,7.1)--(\k+.1,7.1)--(\k+.1,7.3)--(\k-.1,7.3)--(\k-.1,7.1);
\fill[color=brown](\k-.1,-2.1)--(\k+.1,-2.1)--(\k+.1,-2.3)--(\k-.1,-2.3)--(\k-.1,-2.3);
\fill[color=brown](\k-.1,-2.4)--(\k+.1,-2.4)--(\k+.1,-2.6)--(\k-.1,-2.6)--(\k-.1,-2.6);
}
\foreach \k in {6.5} {
\fill[color=bead] (0,\k) circle (10pt);}
\foreach \k in {0.5,4.5,5.5,6.5} {
\fill[color=bead] (1,\k) circle (10pt);}
\foreach \k in {2.5,3.5,4.5,5.5,6.5} {
\fill[color=bead] (2,\k) circle (10pt);}
\foreach \k in {6.5,5.5,4.5,3.5,2.5,1.5,0.5,-0.5} {
\fill[color=bead] (3,\k) circle (10pt);}
\draw(-.5,7)--(3.5,7); 
\foreach \a in {0,1,2,3} {
\foreach \k in {-1.5,...,6.5}{
\node at (\a,\k)[color=bead]{$-$};};};
\end{tikzpicture}
\;\,
\begin{tikzpicture}\tikzset{yscale=0.3,xscale=0.3}
\foreach \k in {0,1,2,3} {
\fill [color=brown] (\k-.1,-2)--(\k-.1,7) -- (\k+0.1,7)-- (\k+.1,-2)--(\k-.1,-2);
\fill[color=brown](\k-.1,7.1)--(\k+.1,7.1)--(\k+.1,7.3)--(\k-.1,7.3)--(\k-.1,7.1);
\fill[color=brown](\k-.1,-2.1)--(\k+.1,-2.1)--(\k+.1,-2.3)--(\k-.1,-2.3)--(\k-.1,-2.1);
\fill[color=brown](\k-.1,-2.4)--(\k+.1,-2.4)--(\k+.1,-2.6)--(\k-.1,-2.6)--(\k-.1,-2.6);
}
\foreach \k in {5.5,6.5} {
\fill[color=bead] (0,\k) circle (10pt);}
\foreach \k in {3.5,4.5,5.5,6.5} {
\fill[color=bead] (1,\k) circle (10pt);}
\foreach \k in {1.5,2.5,3.5,4.5,5.5,6.5} {
\fill[color=bead] (2,\k) circle (10pt);}
\foreach \k in {6.5,5.5,4.5,3.5,2.5,1.5,0.5,-0.5} {
\fill[color=bead] (3,\k) circle (10pt);}
\draw(-.5,7)--(3.5,7); 
\node at (4.5,4) {.};
\foreach \a in {0,1,2,3} {
\foreach \k in {-1.5,...,6.5}{
\node at (\a,\k)[color=bead]{$-$};};};
\end{tikzpicture}
\end{center}
Then $(\bla,\bs) \ap_4 (\bmu,\bs)$. (One may see this by comparing the $e$-residue diagrams.) However $(\bla,\bs)$ is a Rouquier multipartition while $(\bmu,\bs)$ is not. 
\end{ex}

We note a slight disparity between the cases that $r=1$ and $r>1$. If $r=1$ and $(\la,s) \ap_e (\mu,s)$ then $(\la,s)$ is a Rouquier partition if and only if $(\mu,s)$ is a Rouquier partition. However as we have seen above, if $r>1$ then it is perfectly possible to have $(\bla,\bs) \ap_e (\bmu,\bs)$ where $(\bla,\bs)$ is a Rouquier multipartition but $(\bmu,\bs)$ is not. 

In fact, it is easier than we first believed to check if a $\ap_e$-equivalence class $\R$ is a Rouquier block. For $\bla \in \R$, let $\hk(\bla)=\sum_{k=1}^r \wt(\la^{(k)})$ 
and let $\hk(\R)=\max\{\hk(\bla) \mid \bla \in \R\}$. Let $\R^{\circ} = \{\bla \in \R \mid \hk(\bla)=\hk(\R)\}$. 
In~\cite{Fayers:Cores}, Fayers defines a core block of the Ariki-Koike algebra to be a block in which none of the associated multipartitions have a removable $e$-rim hook. To each block $\R \subset \A^{r}_e$ there is an associated core block $\mathscr{C}$, and the subset $\R^\circ$ is precisely the set of multipartitions obtained by adding $\hk(\R)$ $e$-rim hooks to any of the multipartitions in $\mathscr{C}$. The following result can be deduced from results of Dell'Arciprete~\cite[Proposition~4.8]{DA} and appears explictly in her thesis~\cite{DAThesis}. 

\begin{lemma}
Suppose that $\R$ is a $\ap_e$-equivalence class of $\R$. Then $\R$ is a Rouquier block if and only if every multipartition $\bla \in \R^\circ$ is a Rouquier multipartition.
\end{lemma}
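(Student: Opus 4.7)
The forward direction is immediate: since $\R^\circ \subseteq \R$, if every $\bla \in \R$ is a Rouquier multipartition then so is every $\bla \in \R^\circ$.

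For the converse, assume every element of $\R^\circ$ is Rouquier, and let $\bmu \in \R$. The plan is to compare $\bmu$ with a suitable $\bla \in \R^\circ$ whose hub and weight data dominate those of $\bmu$ component by component. By Proposition~\ref{GenBlock}, $\bmu$ and any element of $\R^\circ$ are connected by a sequence of $\xrightarrow{e}_1$ and $\xrightarrow{e}_2$ moves. A $\xrightarrow{e}_1$ move preserves every hub and the total weight $\hk$, so it preserves $\R^\circ$-membership and leaves the Rouquier condition untouched. The essential case is $\xrightarrow{e}_2$, which can lower $\hk$ while simultaneously shifting beads between two runners across two components.

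Invoking Dell'Arciprete's Proposition~4.8, which provides combinatorial control over how the hubs $\bt^{(k)}$ and component weights $\wt(\la^{(k)})$ vary across a $\ap_e$-equivalence class, I would select $\bla \in \R^\circ$ to play the role of a dominating element: one in which, for every component $k$ and every $0 \leq i < e-1$, we have $\wt(\mu^{(k)}) \leq \wt(\la^{(k)})$ and $t^{(k)}_{i+1}(\bmu) - t^{(k)}_i(\bmu) \geq t^{(k)}_{i+1}(\bla) - t^{(k)}_i(\bla)$. Chaining these bounds with the Rouquier inequality for $\bla$ then gives
\[
\wt(\mu^{(k)}) \leq \wt(\la^{(k)}) \leq t^{(k)}_{i+1}(\bla) - t^{(k)}_i(\bla) + 1 \leq t^{(k)}_{i+1}(\bmu) - t^{(k)}_i(\bmu) + 1,
\]
which is the Rouquier inequality for $\bmu$ in component $k$.

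The hard part will be verifying the dominance on hub gaps: that as one descends from the maximum-weight stratum $\R^\circ$ through a sequence of $\xrightarrow{e}_2$ moves, every adjacent-runner gap in every component only grows or stays the same. Each $\xrightarrow{e}_2$ move trades a bead on one runner for one on another across two components under the congruence $b_1 \equiv b_2 \pmod e$, which strongly restricts the possible motions, but one must still rule out any trade that could shrink a critical gap below the corresponding component weight. This monotonicity is the substance of Dell'Arciprete's Proposition~4.8, and applying it stepwise to a path from $\bla$ to $\bmu$ is what completes the argument.
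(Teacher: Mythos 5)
Your forward direction is correct and immediate, but the converse contains a fatal structural flaw: the dominating element $\bla\in\R^{\circ}$ that your chain of inequalities requires cannot exist unless $\bmu$ already lies in $\R^{\circ}$. As noted at the start of the proof of Lemma~\ref{Drop}, Proposition~\ref{GenBlock} implies that the runner totals $\sum_{k=1}^r t^k_i$ are constant on a $\ap_e$-equivalence class. So if the gaps satisfy $t^k_{i+1}(\bla)-t^k_i(\bla)\leq t^k_{i+1}(\bmu)-t^k_i(\bmu)$ for \emph{every} $k$ and every $i$, summing over $k$ forces all of these inequalities to be equalities; and since $\sum_{i}t^k_i=s_k$ for each $k$ (both configurations carry the same $\bs$), equal gaps force $t^k_i(\bla)=t^k_i(\bmu)$ for all $i,k$. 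Then $\bla$ and $\bmu$ have the same multicore, hence, as $|\bla|=|\bmu|$, the same total weight $\hk$ --- contradicting $\bla\in\R^{\circ}$, $\bmu\notin\R^{\circ}$. The same invariance refutes the monotonicity you defer to \cite{DA}: a single $\xrightarrow{e}_2$ move takes a bead from runner $i$ to runner $j$ in one component and from runner $j$ to runner $i$ in another, so whenever some adjacent gap grows another must shrink; the assertion that every gap grows or stays the same along a weight-decreasing path can only hold if no gap changes at all, in which case the weight does not decrease either. A secondary error: a $\xrightarrow{e}_1$ move does \emph{not} leave the Rouquier condition untouched --- it fixes all hubs but transfers an $e$-rim hook between components, changing the individual $\wt(\la^{(k)})$, and this alone can create or destroy the Rouquier property.

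For context, the paper gives no proof of this lemma, deferring entirely to \cite[Proposition~4.8]{DA} and to Dell'Arciprete's thesis, so leaning on that reference is not in itself objectionable; but the statement you want from it cannot be what it proves, by the invariance argument above. What the converse actually needs is a comparison across \emph{different} multicores: for a fixed multicore, the worst elements of $\R$ sharing it are those with all the available $e$-rim hooks piled into a single component, so the lemma amounts to showing that the deficit (number of addable rim hooks) $-\,(t^k_{i+1}-t^k_i)$, taken over all $k$ and $i$, is maximised when the multicore lies in the associated core block $\mathscr{C}$. Proving, or precisely quoting, that estimate is the step your argument is missing.
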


Thus to see if a block is a Rouquier block, we just need to look at the associated core block.  
To further investigate Rouquier blocks, we use the map $\Phi_r$ introduced in the last section. We begin with one more definition. If $(\la,s)\in \A_e$ is such that $\eta(\la,s) = (\brho,\bt)$ then we say that $(\la,s)$ is an $r$-Rouquier partition if
\[\wt(\la) \leq t_{i+1}-t_i+r\quad  \text{for all} \quad 0 \leq i < e-1.\] In this case, every $(\mu,s)\in \A_e$ with $(\mu,s) \ap_e (\la,s)$ is also a $r$-Rouquier partition and we say that the $\ap_e$-equivalence class of $(\la,s)$ is a $r$-Rouquier block. 

\begin{lemma} \label{NotRR}
Suppose that $(\bla,\bs) \in \A_e^r$ is not a Rouquier multipartition. Then $\Psi_r(\bla,\bs) \in \A_e$ is not an $r$-Rouquier partition.
\end{lemma}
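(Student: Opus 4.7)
The strategy is to relate the weight of $\Psi_r(\bla,\bs)$ to the weights of the components via an explicit formula, and then trace how a Rouquier failure in one component forces a failure in the combined image.

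First, I establish the charge identity $\tilde t_j = \sum_{k=1}^r t_j^{(k)}$ for each $j \in I$. This follows by tracing Uglov's construction on runner $j$: the $\beta$-set of the output is the disjoint interleaving $\bigsqcup_k \{Mr + (r-k) : M \in C_j^{(k)}\}$ of the component $\beta$-sets, and counting beads against the baseline shows that the charge is additive. Next, I prove the weight identity
\[\wt(\tilde \la) = w_0 + r \sum_{k=1}^r \wt(\la^{(k)}), \qquad w_0 := \wt\bigl(\Psi_r(\bar{\bla}, \bs)\bigr),\]
where $\bar{\bla} = (\bar{\la}^{(1)}, \ldots, \bar{\la}^{(r)})$ is the componentwise multi-core. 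By Lemma~\ref{PsiMove}, adding an $e$-rim hook to $\la^{(k)}$ corresponds under $\Psi_r$ to shifting a bead on some runner of $\tilde B$ from $\psi_k(x)$ to $\psi_k(x)+re$, that is, by exactly $r$ positions on a single runner; a direct inversion count on that runner shows such a shift always changes its partition weight by exactly $r$, regardless of any beads at the intermediate positions. Iterating from $\bar{\bla}$ to $\bla$ gives the identity.

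Now suppose $(\bla,\bs)$ is not a Rouquier multipartition, so there exist $k^*$ and $i^*$ with $\wt(\la^{(k^*)}) \geq d_{k^*} + 2$, where $d_k := t_{i^*+1}^{(k)} - t_{i^*}^{(k)}$. Combining the two identities, the $r$-Rouquier discrepancy at index $i^*$ satisfies
\[\wt(\tilde\la) - (\tilde t_{i^*+1} - \tilde t_{i^*}) - r \geq w_0 + (r-1)d_{k^*} + r - \sum_{k \neq k^*} d_k,\]
so it suffices to prove the combinatorial inequality $w_0 \geq \sum_{k \neq k^*} d_k - (r-1)d_{k^*} - r + 1$.

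Verifying this last inequality is the main obstacle. Here $w_0 = \sum_j |\tilde \rho_j|$, where each $\tilde \rho_j$ is the partition of an interleaved empty-partition $\beta$-set determined by the charges $t_j^{(1)}, \ldots, t_j^{(r)}$. One computes $|\tilde \rho_j|$ explicitly: for $r=2$ it equals a triangular-number expression in the pairwise difference $t_j^{(1)} - t_j^{(2)}$, and in general it is a sum of binomial-type terms in the pairwise charge differences. The crucial feature is that $w_0$ grows quadratically in these differences while the right-hand side above is linear, so it suffices to check the inequality in a finite number of boundary cases determined by the signs of the relevant differences $t_{i^*+1}^{(k)} - t_{i^*}^{(k)}$ and $t_j^{(k)} - t_j^{(k')}$, yielding the required bound.
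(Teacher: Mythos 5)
Your Steps 1--3 are sound and take a genuinely different route from the paper: the charge additivity $\tilde t_j=\sum_{k} t^{(k)}_j$ and the weight identity $\wt(\tilde\la)=w_0+r\sum_{k}\wt(\la^{(k)})$ are both correct (the latter because each $e$-rim hook added to a component moves a bead down $r$ rows on a single runner of the combined abacus, which changes the size of that runner's quotient component by exactly $r$), and they correctly reduce the lemma to the inequality $w_0\ge\sum_{k\ne k^*}(d_k-d_{k^*})-r+1$. The paper proceeds differently: it first replaces $\la^{(k^*)}$ within its block by a partition whose $e$-quotient is concentrated on runner $i^*$, using Proposition~\ref{BlockPreserve} to transfer the $r$-Rouquier question to the new configuration, and then performs a direct two-runner bead count on the image.

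The gap is in your Step 4. The justification ``$w_0$ grows quadratically while the right-hand side is linear, so it suffices to check finitely many boundary cases'' does not work as stated, for two reasons. First, the quadratic growth of $w_0=\sum_j|\tilde\rho_j|$ is in the cross-component differences $t^{(k)}_j-t^{(k')}_j$ on a fixed runner $j$, whereas the right-hand side is linear in the within-component differences $d_k=t^{(k)}_{i^*+1}-t^{(k)}_{i^*}$; these are different variables, and one must first rewrite $\sum_{k\ne k^*}(d_k-d_{k^*})$ as $\sum_{k\ne k^*}(u_k-v_k)$ with $u_k=t^{(k)}_{i^*+1}-t^{(k^*)}_{i^*+1}$ and $v_k=t^{(k)}_{i^*}-t^{(k^*)}_{i^*}$ before any comparison is meaningful. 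Second, and more seriously, the inequality is \emph{tight} exactly in the regime where all quantities are small: for $r=2$, $k^*=1$, taking $u_2=2$, $v_2=-1$ and all other runners balanced gives $w_0=f(2)+f(-1)=1+1=2$ and right-hand side $u_2-v_2-1=2$, where $f(\delta)$ denotes the inversion count of two interleaved empty runners at charge difference $\delta$ (namely $\binom{\delta}{2}$ for $\delta\ge0$ and $\binom{|\delta|+1}{2}$ for $\delta<0$). So no asymptotic-domination argument can dispose of the case analysis; what is actually required is the exact pairwise bound $f(u)+f(v)\ge u-v-1$, together with its $r$-component analogue in which the tolerances are asymmetric according to the position of each component within a block of $r$ rows, summed over $k\ne k^*$. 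Establishing those exact bounds is precisely the bead-counting that occupies the bulk of the paper's proof (the sets $S,T,U,V$ and the lower bounds for $|\sigma_{i}|$ and $|\sigma_{i+1}|$); your proposal defers this rather than carrying it out, so as written the proof is incomplete, although the overall strategy is salvageable.
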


\begin{proof}
  Suppose that $(\bla,\bs)$ is not a Rouquier multipartition. Then there exists $1 \leq k \leq r$ such that $(\la^{(k)},s_k)$ is not a Rouquier partition. Write $\la^{(k)}=\la$. If we suppose that $\eta(\la,s_k) = ((\rho_0,\rho_1,\ldots,\rho_{e-1}),(t_0,t_1,\ldots,t_{e-1}))$ then there exists $0 \leq i <e-1$ such that $\wt(\la)>t_{i+1}-t_i +1$. Take $\mu \sim_e \la$ to be the partition with $\eta(\mu,s_k) = (\emp,\ldots,\emp,(\wt(\la)),\emp,\ldots,\emp),(t_0,t_1,\ldots,t_{e-1}))$ where $(\wt(\la))$ occurs in position $i$. Let $\bmu$ be the multipartition obtained from $\bla$ by replacing $\la$ with $\mu$ in component $k$. Then $\Psi_r(\bla,\bs) \ap_e \Psi_r(\bmu,\bs)$ by Proposition~\ref{BlockPreserve} so $\Psi_r(\bla,\bs)$ is an $r$-Rouquier partition if and only if $\Psi_r(\bmu,\bs)$ is an $r$-Rouquier partition.

Our aim then is to show that $\Psi_r(\bmu,\bs)$ is not an $r$-Rouquier partition. Set $\wt(\la)=\bar{w}$ and $t_{i+1}-t_i=\bar{d}$ so that by assumption $\bar{w}>\bar{d}+1$. Suppose that $\eta(\Psi_r(\bmu,\bs)) = ((\sigma_0,\sigma_1,\ldots,\sigma_{e-1}),(r_0,r_1,\ldots,r_{e-1}))$. Let $w=|\sigma_i|+|\sigma_{i+1}|$ and $d=r_{i+1}-r_i$ so that $\wt(\Psi_r(\bmu,\bs)) \geq w$.  We will show that $w>d+r$.

Let $B$ be the $\beta$-set corresponding to $\Psi_r(\bmu,\bs)$. Let $M$ be maximal such that $M \equiv r-k \mod r$ and $Me+i \in B$. Note that the condition $\bar{w}>\bar{d}+1$ implies that $(M-r)e+i+1, Me+i+1 \notin B$. Let $\Z^{\vee (r-k)} = \{ m \in \Z \mid m \not \equiv r-k\mod r\}$. Define
  \begin{align*}
S & = \{m \in\Z^{\vee( r-k)} \mid me+i \notin B \text{ and } me+i+1 \in B\}, & 
T & = \{m \in\Z^{\vee( r-k)} \mid me+i \in B \text{ and } me+i+1 \notin B\}, \\
U & = \{m \in\Z^{\vee( r-k)} \mid me+i \in B \text{ and } me+i+1 \in B\}, &
V & = \{m \in\Z^{\vee( r-k)} \mid me+i \notin B \text{ and } me+i+1 \notin B\}.
  \end{align*}
  Now set
  \[s_1 = \#\{m \in S \mid m<M-r\}, \qquad s_2 = \#\{m \in S \mid M-r<m<M\}, \qquad s_3 = \#\{m \in S \mid M<m\},\]
  and define $t_1,t_2,t_3,u_2,u_3,v_1$ and $v_2$ analogously. Although the sets $U$ and $V$ are infinite, $u_2,\,u_3,\,v_1$ and $v_2$ all measure finite subsets and we do not define $u_1$ or $v_3$. Now
  \begin{align*}
d & = \#\{m \in\Z \mid me+i \notin B \text{ and } me+i+1 \in B\} - \#\{m \in\Z \mid me+i \in B \text{ and } me+i+1 \notin B\} \\
& = \#\{m \in\Z^{\vee(r-k)} \mid me+i \notin B \text{ and } me+i+1 \in B\} -  \#\{m \in\Z^{\vee( r-k)} \mid me+i \in B \text{ and } me+i+1 \notin B\} \\
& \qquad + \#\{m \equiv r-k \mod r \mid me+i \notin B \text{ and } me+i+1 \in B\}\\ & \qquad -  \#\{m \equiv r-k \mod r \mid me+i \in B \text{ and } me+i+1 \notin B\} \\
&=s_1+s_2+s_3-t_1-t_2-t_3+\bar{d}
  \end{align*}
so that
  \begin{align*}
    d+r&=s_1+s_2+s_3-t_1-t_2-t_3+\bar{d}+r\\
    &= s_1+s_2+s_3-t_1-t_2-t_3 + \bar{d} + s_2+t_2+u_2+v_2 +1 \\
    & \leq s_1+s_2+s_3+\bar{d} +s_2+u_2+v_2 +1\\
    & < s_1+s_2+s_3+\bar{w}+s_2+u_2+v_2.
  \end{align*}
 Now we give a lower bound for $w = |\sigma_i|+|\sigma_{i+1}|$. Recalling the way in which we chose $M$, we have
  \begin{align*}
    |\sigma_i| &= \#\{(m_1,m_2) \in \Z \times \Z \mid m_1 < m_2 \text{ and } m_1e+i \notin B, \, m_2e+i \in B\} \\
    & \geq (t_3+u_3+1)(s_1+s_2+v_1+v_2+\bar{w}) + (t_2+u_2)(\bar{w}+s_1+v_1), \\
    |\sigma_{i+1}| &= \#\{(m_1,m_2) \in \Z \times \Z \mid m_1 < m_2 \text{ and } m_1e+i+1 \notin B, \, m_2e+i+1 \in B\} \\
    & \geq (s_3+u_3)(2+t_1+t_2+v_1+v_2) + (s_2+u_2)(1+t_1+v_1).
   \end{align*}
Hence    
\begin{align*} 
    w &\geq (t_3+u_3+1)(s_1+s_2+v_1+v_2+\bar{w}) + (t_2+u_2)(\bar{w}+s_1+v_1) \\
    & \qquad + (s_3+u_3)(2+t_1+t_2+v_1+v_2) + (s_2+u_2)(1+t_1+v_1) \\ 
    & \geq s_1+s_2+v_2 + \bar{w}+s_3 +s_2+u_2 
    \end{align*}
    so that $w>d+r$ as required. 
  \end{proof}

Lemma~\ref{NotRR} shows that if $(\la,s) \in \A_e$ is an $r$-Rouquier partition then $\Phi_r(\la,s) \in \A_e^r$ is a Rouquier multipartition. Since all elements in the same block as $(\la,s)$ are also $r$-Rouquier partitions, the next result follows by applying Corollary~\ref{DU}.

\begin{corollary} \label{RR}
Suppose that $\Ra \subset \A_e$ is an $r$-Rouquier block. Then $\Phi_r(\Ra) \subset \A^r_e$ is a disjoint union of Rouquier blocks.  
\end{corollary}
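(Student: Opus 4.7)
The plan is to deduce this directly from the contrapositive of Lemma~\ref{NotRR} together with Corollary~\ref{DU}, so no further combinatorial work is required. First, apply Corollary~\ref{DU} to the $\ap_e$-equivalence class $\Ra \subset \A_e$: this already tells us that $\Phi_r(\Ra) \subset \A_e^r$ is a disjoint union of $\ap_e$-equivalence classes. So the only remaining content is to show that each of these classes is in fact a Rouquier block, which by definition means every element is a Rouquier multipartition.

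Next I would fix an arbitrary $(\bla,\bs) \in \Phi_r(\Ra)$ and argue by contradiction. Applying $\Psi_r = \Phi_r^{-1}$, the element $\Psi_r(\bla,\bs)$ lies in $\Ra$. By hypothesis $\Ra$ is an $r$-Rouquier block, and by the definition given immediately before Lemma~\ref{NotRR}, being an $r$-Rouquier partition is preserved under $\ap_e$; hence $\Psi_r(\bla,\bs)$ is an $r$-Rouquier partition. But Lemma~\ref{NotRR} states that if $(\bla,\bs)$ were not a Rouquier multipartition, then $\Psi_r(\bla,\bs)$ would fail to be an $r$-Rouquier partition, a contradiction. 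Therefore $(\bla,\bs)$ must be a Rouquier multipartition.

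Since $(\bla,\bs)$ was arbitrary, every element of $\Phi_r(\Ra)$ is a Rouquier multipartition; combined with the partition of $\Phi_r(\Ra)$ into $\ap_e$-equivalence classes supplied by Corollary~\ref{DU}, this shows that each such class is a Rouquier block, giving the desired disjoint-union decomposition. The proof is really just bookkeeping: the substantive obstacle was already cleared in the proof of Lemma~\ref{NotRR}, where one had to compare the runner gaps on the $e$-abacus of $\Psi_r(\bla,\bs)$ with those of the components of $(\bla,\bs)$ and produce the bound $w > d+r$; here we simply take the contrapositive and invoke the block-preservation statement.
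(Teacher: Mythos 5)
Your proof is correct and follows essentially the same route as the paper: the paper likewise combines the contrapositive of Lemma~\ref{NotRR} (every element of $\Ra$ is an $r$-Rouquier partition, so its image under $\Phi_r$ is a Rouquier multipartition) with Corollary~\ref{DU} to decompose $\Phi_r(\Ra)$ into $\ap_e$-classes consisting entirely of Rouquier multipartitions. No gaps.
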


We can therefore generate blocks of Rouquier multipartitions by applying the map $\Phi_r$ to an $r$-Rouquier block. Unfortunately it is not true that every such block can be generated in this way. 

\begin{ex} Suppose that $r=2$ and $e=6$. Let $\bs=(30,30)$. Then the block containing the multipartition $(\bla,\bs)$ below is a Rouquier block. However, there does not exist $\bs' \in \Z^2$ such that $s_k \equiv s'_i \mod 6$ for $k=1,2$ and $\Psi_r(\bla,\bs')$ lies in a $2$-Rouquier block. 

\begin{center}
\begin{tikzpicture}\tikzset{yscale=0.3,xscale=0.3}
\foreach \k in {0,1,2,3,4,5} {
\fill [color=brown] (\k-.1,-2)--(\k-.1,7) -- (\k+0.1,7)-- (\k+.1,-2)--(\k-.1,-2);
\fill[color=brown](\k-.1,7.1)--(\k+.1,7.1)--(\k+.1,7.3)--(\k-.1,7.3)--(\k-.1,7.1);
\fill[color=brown](\k-.1,-2.1)--(\k+.1,-2.1)--(\k+.1,-2.3)--(\k-.1,-2.3)--(\k-.1,-2.1);
\fill[color=brown](\k-.1,-2.4)--(\k+.1,-2.4)--(\k+.1,-2.6)--(\k-.1,-2.6)--(\k-.1,-2.6);
}
\foreach \k in {4.5,6.5} {
\fill[color=bead] (0,\k) circle (10pt);}
\foreach \k in {4.5,5.5,6.5} {
\fill[color=bead] (1,\k) circle (10pt);}
\foreach \k in {3.5,4.5,5.5,6.5} {
\fill[color=bead] (2,\k) circle (10pt);}
\foreach \k in {1.5,2.5,3.5,4.5,5.5,6.5} {
\fill[color=bead] (3,\k) circle (10pt);}
\foreach \k in {0.5,1.5,2.5,3.5,4.5,5.5,6.5} {
\fill[color=bead] (4,\k) circle (10pt);}
\foreach \k in {-.5,.5,1.5,2.5,3.5,4.5,5.5,6.5} {
\fill[color=bead] (5,\k) circle (10pt);}

\draw(-.5,7)--(5.5,7); 
\foreach \a in {0,1,2,3,4,5} {
\foreach \k in {-1.5,...,6.5}{
\node at (\a,\k)[color=bead]{$-$};};};

\foreach \k in {10,11,12,13,14,15} {
\fill [color=brown] (\k-.1,-2)--(\k-.1,7) -- (\k+0.1,7)-- (\k+.1,-2)--(\k-.1,-2);
\fill[color=brown](\k-.1,7.1)--(\k+.1,7.1)--(\k+.1,7.3)--(\k-.1,7.3)--(\k-.1,7.2);
\fill[color=brown](\k-.1,-2.1)--(\k+.1,-2.1)--(\k+.1,-2.3)--(\k-.1,-2.3)--(\k-.1,-2.1);
\fill[color=brown](\k-.1,-2.4)--(\k+.1,-2.4)--(\k+.1,-2.6)--(\k-.1,-2.6)--(\k-.1,-2.6);
}
\foreach \k in {6.5} {
\fill[color=bead] (10,\k) circle (10pt);}
\foreach \k in {5.5,6.5} {
\fill[color=bead] (11,\k) circle (10pt);}
\foreach \k in {4.5,5.5,6.5} {
\fill[color=bead] (12,\k) circle (10pt);}
\foreach \k in {0.5,1.5,2.5,3.5,4.5,5.5,6.5} {
\fill[color=bead] (13,\k) circle (10pt);}
\foreach \k in {-.5,0.5,1.5,2.5,3.5,4.5,5.5,6.5} {
\fill[color=bead] (14,\k) circle (10pt);}
\foreach \k in {-1.5,-.5,.5,1.5,2.5,3.5,4.5,5.5,6.5} {
\fill[color=bead] (15,\k) circle (10pt);}
\draw(9.5,7)--(15.5,7); 
\foreach \a in {10,11,12,13,14,15} {
\foreach \k in {-1.5,...,6.5}{
\node at (\a,\k)[color=bead]{$-$};};};

\end{tikzpicture}
\end{center}
To see this, note that if $\bs'\in \Z^2$ is such that $s_k \equiv s'_k \mod 6$ for $k=1,2$ then $$\eta(\Psi_r(\bla,\bs'))=((\sigma_0,\sigma_1,\ldots,\sigma_{e-1}),(t_0+3,t_0+5,t_0+7,t_0+13,t_0+15,t_0+17))$$ for some $t_0 \in \Z$. So for $\Psi_r(\bla,\bs')$ to lie in a $2$-Rouquier block, we need $\wt(\Psi_r(\bla,\bs')) \leq 4$. However if $s_1'\geq s_2'$ then $|\sigma_0|\geq 3$ and $|\sigma_1|,|\sigma_2|\geq 1$ and if $s_1'<s_2'$ then $|\sigma_0|\geq 2$ and $|\sigma_3|,|\sigma_4|,|\sigma_5|\geq 1$, so that in both cases $\wt(\Psi_r(\bla,\bs')) = \sum_{i=0}^5 |\sigma_i|\ge 5$.  

\end{ex}

If $r=e=2$, we have a converse to Corollary~\ref{RR}. We do not give a proof here as we hope to return to Rouquier blocks for the case that $e=r=2$ in a later paper. 

\begin{lemma}
Suppose that $e=r=2$ and that $(\bla,\bs) \in \A_2^2$ lies in a Rouquier block. Then there exists $\bs' \in \Z^2$ with $s_i \equiv s_i'$ for $i=1,2$ such that $\Psi_2(\bla,\bs')$ lies in a 2-Rouquier block.
\end{lemma}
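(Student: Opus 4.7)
Write $\eta(\la^{(k)}, s_k) = ((\rho_0^{(k)}, \rho_1^{(k)}), (t_0^{(k)}, t_1^{(k)}))$ and set $\delta^{(k)} = t_1^{(k)} - t_0^{(k)}$, $w_k = \wt(\la^{(k)})$; the Rouquier hypothesis on $(\bla, \bs)$ gives $w_k \leq \delta^{(k)} + 1$ for $k = 1, 2$. The plan is to search for $\bs' = \bs + (2n_1, 2n_2)$ and, with $d := n_1 - n_2$, reduce the claim to an explicit numerical inequality in $d$.

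First I would compute $\Psi_2(\bla, \bs')$ explicitly. For $e = r = 2$ the maps $\psi_k$ show that the big-abacus runner-$i$ $\beta$-set of the image equals $2\,\tilde{C}_i^{(2)} \cup (2\,\tilde{C}_i^{(1)} + 1)$, where $\tilde{C}_i^{(k)} = C_i^{(k)} + n_k$. A bead-count then gives $u_1 - u_0 = \delta^{(1)} + \delta^{(2)}$, independently of $\bs'$. Decomposing the pair-count formula for $|\sigma_i|$ by the parities of the two integers yields
\[\wt(\Psi_2(\bla, \bs')) = w_1 + w_2 + I(d), \qquad I(d) = \sum_{i=0,1}\bigl(N_i^+(d) + N_i^-(d)\bigr),\]
where $N_i^\pm(d)$ count inversions across the two components on runner $i$. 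The 2-Rouquier condition on the image thus reduces to
\[I(d) \leq (\delta^{(1)} + \delta^{(2)} + 2) - (w_1 + w_2),\]
and the right-hand side is non-negative by Rouquier.

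Next I would use the full strength of the hypothesis: every bipartition in $\R$, and not just $\bla$, is Rouquier. Using Proposition~\ref{GenBlock} and the moves $\xrightarrow{e}_1$, one may redistribute weight inside $\R$ between the components and between the two runners. Applying the Rouquier bound to the resulting bipartitions --- in particular, to those obtained by moving as much weight as possible onto a single component via a single chosen runner --- produces sharp inequalities relating $w_1, w_2, \delta^{(1)}, \delta^{(2)}$, and forces $|\delta^{(1)} - \delta^{(2)}|$ to be small compared to the slack $S := (\delta^{(1)} + \delta^{(2)} + 2) - (w_1 + w_2)$.

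Finally I would pick the optimal $d$. In the core case $\rho_i^{(k)} = \emp$ one computes
\[\min_d I(d) = \lfloor (\delta^{(1)} - \delta^{(2)})^2 / 4 \rfloor,\]
attained at $d$ rounding $-(\alpha_0 + \alpha_1)/2$ up or down, where $\alpha_i := t_i^{(1)} - t_i^{(2)}$; the non-core case adds controllable corrections depending on the shapes of the $\rho_i^{(k)}$. Combined with the bound $(\delta^{(1)} - \delta^{(2)})^2 \leq 4S$ supplied by the previous step, this gives $I(d) \leq S$, establishing the 2-Rouquier condition. The main obstacle lies in the preceding step: pinning down exactly which inequalities the block-level Rouquier hypothesis imposes on $(t_i^{(k)}, \rho_i^{(k)})$, and then checking them against $I(d)$ beyond the core case, requires a detailed combinatorial case analysis, which is why the author postpones the argument to a later paper.
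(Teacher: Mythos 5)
The paper offers no proof of this lemma --- it is stated with the explicit remark that the argument is deferred to a later paper --- so there is no in-text proof to compare your attempt against; I can only assess the attempt on its own terms. Your opening reductions are correct and I can verify them: for $e=r=2$ the map $\psi_k$ does interleave the two component abacuses runner by runner, the charge of runner $i$ of the image is $t_i^{(1)}+t_i^{(2)}+n_1+n_2$, so $u_1-u_0=\delta^{(1)}+\delta^{(2)}$ is independent of the shift; splitting the inversion count of each runner of the image by parity gives $\wt(\Psi_2(\bla,\bs'))=w_1+w_2+I(d)$ with $I(d)$ the number of cross-component inversions; and in the core case one computes $I(d)=\sum_i\binom{\alpha_i+d+1}{2}$ with minimum $\lfloor(\delta^{(1)}-\delta^{(2)})^2/4\rfloor$. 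A further simplification you do not exploit: by Proposition~\ref{BlockPreserve} all elements of $\R$ have the same image block under $\Psi_2$, so $w_1+w_2+I(d)$ is constant on $\R$ and it suffices to verify the target inequality for a single well-chosen representative (for instance one with $|\delta^{(1)}-\delta^{(2)}|$ minimal among elements of $\R$), which would largely dissolve your ``non-core corrections'' problem.

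The genuine gap is the step you yourself flag: the inequality $(\delta^{(1)}-\delta^{(2)})^2\leq 4S$ is asserted, not proved. It does not follow from the Rouquier condition on $\bla$ alone (e.g.\ $\delta^{(1)}=10$, $\delta^{(2)}=0$, $w_1=w_2=0$ satisfies $w_k\leq\delta^{(k)}+1$ but violates it); it must be extracted from the hypothesis that \emph{every} element of $\R$ is a Rouquier bipartition. Doing so requires (i) determining exactly which pairs $(\delta'^{(1)},\delta'^{(2)})$ with $\delta'^{(1)}+\delta'^{(2)}=\Sigma$ occur in $\R$ together with the corresponding total weight $W'=W+\tfrac{1}{8}\bigl(\gamma^2-\gamma'^2\bigr)$, where $\gamma'=\delta'^{(1)}-\delta'^{(2)}$ --- note the $\xrightarrow{2}_2$ moves with $h$ odd change $\gamma'$ only in steps of $4$, so one must check the relevant configurations are actually reachable --- and (ii) applying the bound $W'\leq\min(\delta'^{(1)},\delta'^{(2)})+1$ at $\gamma'$ near $0$ to recover $\gamma^2/4\leq\Sigma+2-W$. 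Neither (i) nor (ii) appears in your write-up, and without them the final comparison $I(d)\leq S$ is unsupported. Since you explicitly defer both this step and the non-core analysis to ``a detailed combinatorial case analysis,'' what you have is a plausible and, I believe, workable strategy, but not a proof of the lemma.
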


We will obtain a general converse to Corollary~\ref{RR} as follows. First we introduce the notion of a `stretched block' and we show that all sufficiently stretched blocks $\R \subset \A_e^r$ have the property that $\Psi_r(\R)$ lies in an $r$-Rouquier block. Secondly we show that when we stretch a Rouquier block, we obtain a block which is in some sense equivalent to the original block.

\subsection{Stretching} \label{Stretching}
Let $\M=(M_0,M_1,\ldots,M_{e-1}) \in \Z^{e}$. Suppose that $(\la,s) \in \A_e$ where $$\eta(\la,s) = ((\rho_0,\rho_1,\ldots,\rho_{e-1}),(t_0,t_1,\ldots,t_{e-1})).$$ Define $\Str((\la,s);\M)$ to be the abacus configuration in $\A_e$ such that
\[\eta(\Str((\la,s);\M)) = ((\rho_0,\rho_1,\ldots,\rho_{e-1}),(t_0+M_0,t_1+M_1,\ldots,t_{e-1}+M_{e-1})).\]
In other words, the intuitive idea is that when we stretch $(\la,s)$, the $e$-quotient remains the same but we change the number of beads on each runner. We extend the definition of stretching to tuples of abacuses in the natural way. Suppose that $(\bla,\bs) \in \A_e^r$ and that $\M=(M_0,M_1,\ldots,M_{e-1}) \in \Z^{e}_{\geq 0}$. Then
\[\Str((\bla,\bs);\M)=((\mu^{(1)},\mu^{(2)},\ldots,\mu^{(r)}),(s'_1,s'_2,\ldots,s'_r))\]
where $(\mu^{(k)},s_k') = \Str((\la^{(k)},s_k);\M)$ for $1 \leq k \leq r$. 

\begin{ex}
Let $\M=(0,1,3)$. If we take the abacus configuration $(\bla,\bs)$ on the left and apply the stretching operation, we obtain the abacus configuration $\Str((\bla,\bs);M)$ on the right. 
\begin{center} 
\begin{tikzpicture}\tikzset{yscale=0.3,xscale=0.3}
\foreach \k in {0,1,2,4,5,6} {
\fill [color=brown] (\k-.1,0)--(\k-.1,7) -- (\k+0.1,7)-- (\k+.1,0)--(\k-.1,0);
\fill[color=brown](\k-.1,7.1)--(\k+.1,7.1)--(\k+.1,7.3)--(\k-.1,7.3)--(\k-.1,7.1);
\fill[color=brown](\k-.1,-0.1)--(\k+.1,-0.1)--(\k+.1,-0.3)--(\k-.1,-0.3)--(\k-.1,-0.1);
\fill[color=brown](\k-.1,-0.4)--(\k+.1,-0.4)--(\k+.1,-0.6)--(\k-.1,-0.6)--(\k-.1,-0.4);
}
\foreach \k in {6.5} {
\fill[color=bead] (0,\k) circle (10pt);}
\foreach \k in {4.5,3.5,6.5} {
\fill[color=bead] (1,\k) circle (10pt);}
\foreach \k in {3.5,5.5,6.5} {
\fill[color=bead] (2,\k) circle (10pt);}
\foreach \k in {4.5,6.5} {
\fill[color=bead] (4,\k) circle (10pt);}
\foreach \k in {4.5,5.5,6.5} {
\fill[color=bead] (5,\k) circle (10pt);}
\foreach \k in {3.5,4.5,5.5,6.5} {
  \fill[color=bead] (6,\k) circle (10pt);}
\draw(-.5,7)--(2.5,7); \draw(3.5,7)--(6.5,7);
\foreach \a in {0,1,2,4,5,6} {
\foreach \k in {0.5,...,6.5}{
\node at (\a,\k)[color=bead]{$-$};};};
\end{tikzpicture}
\qquad \qquad \qquad 
\begin{tikzpicture}\tikzset{yscale=0.3,xscale=0.3}
\foreach \k in {0,1,2,4,5,6} {
\fill [color=brown] (\k-.1,0)--(\k-.1,7) -- (\k+0.1,7)-- (\k+.1,0)--(\k-.1,0);
\fill[color=brown](\k-.1,7.1)--(\k+.1,7.1)--(\k+.1,7.3)--(\k-.1,7.3)--(\k-.1,7.1);
\fill[color=brown](\k-.1,-0.1)--(\k+.1,-0.1)--(\k+.1,-0.3)--(\k-.1,-0.3)--(\k-.1,-0.1);
\fill[color=brown](\k-.1,-0.4)--(\k+.1,-0.4)--(\k+.1,-0.6)--(\k-.1,-0.6)--(\k-.1,-0.4);
}
\foreach \k in {6.5} {
\fill[color=bead] (0,\k) circle (10pt);}
\foreach \k in {5.5,2.5,3.5,6.5} {
\fill[color=bead] (1,\k) circle (10pt);}
\foreach \k in {0.5,2.5,3.5,4.5,5.5,6.5} {
\fill[color=bead] (2,\k) circle (10pt);}
\foreach \k in {4.5,6.5} {
\fill[color=bead] (4,\k) circle (10pt);}
\foreach \k in {3.5,4.5,5.5,6.5} {
\fill[color=bead] (5,\k) circle (10pt);}
\foreach \k in {0.5,1.5,2.5,3.5,4.5,5.5,6.5} {
  \fill[color=bead] (6,\k) circle (10pt);}
\draw(-.5,7)--(2.5,7); \draw(3.5,7)--(6.5,7);
\foreach \a in {0,1,2,4,5,6} {
\foreach \k in {0.5,...,6.5}{
\node at (\a,\k)[color=bead]{$-$};};};
\end{tikzpicture} \qquad

\end{center}

\end{ex}

We now notice that the stretching operation preserves blocks, hence if $\R$ is a block of $\A^r_e$, we may consider the block $\Str(\R;\M)$.

\begin{lemma} Let $\M=(M_0,M_1,\ldots,M_{e-1}) \in \Z^{e}_{\geq 0}$. 
Suppose that $(\bla,\bs),(\bmu,\bs') \in \A^r_e$. Then $(\bla,\bs) \ap_e (\bmu,\bs')$ if and only if $\Str((\bla,\bs);\M) \ap_e \Str((\bmu,\bs');\M)$. 
\end{lemma}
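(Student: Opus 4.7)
The plan is to reduce the biconditional to the fact that stretching commutes with the generating moves $\xrightarrow{e}_1$ and $\xrightarrow{e}_2$ of Proposition~\ref{GenBlock}. First, since $\ap_e$ by definition requires equal multicharges, and since one verifies directly from the definition of $\eta$ that $\Str(\cdot;\M)$ sends $(\la,s)$ to $(\la',s+|\M|)$ for some $\la'$ (where $|\M|=\sum_{i=0}^{e-1}M_i$), we may assume $\bs=\bs'$ throughout. A direct unwinding of $\eta$ then shows that at the level of $\beta$-sets the stretching operation is simply $B\mapsto\{b+M_{b\bmod e}\cdot e\mid b\in B\}$; that is, every bead on runner $i$ is moved up by $M_i$ rows. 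This defines a bijection on $\A_e$ (and componentwise on $\A_e^r$), whose inverse is obtained by formally allowing $-\M\in\Z^e$ (the same formula makes sense for any $\M\in\Z^e$).

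By Proposition~\ref{GenBlock} it therefore suffices to prove one direction of the biconditional: if $(\bla,\bs)\xrightarrow{e}_z(\bmu,\bs)$ for some $z\in\{1,2\}$, then $\Str((\bla,\bs);\M)$ and $\Str((\bmu,\bs);\M)$ are connected by a single $\xrightarrow{e}_z$ move (possibly reversed). The converse then follows by applying the same argument to the inverse stretching. The case $z=1$ is immediate: decreasing a $\beta$-number by $e$ on runner $i$ of some component commutes with the uniform shift by $M_i e$ on that runner, so the move transfers directly.

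The main obstacle is the case $z=2$. With the notation from the definition of $\xrightarrow{e}_2$, write $i_1=b_1\bmod e=b_2\bmod e$ and $i_2=(b_1+h)\bmod e$. Under stretching the four relevant $\beta$-number positions shift by either $M_{i_1}e$ or $M_{i_2}e$, so the analogous move between the stretched configurations has effective jump $h'=h+(M_{i_2}-M_{i_1})e$; the required congruence $b_1\equiv b_2\pmod e$ is manifestly preserved. The crucial point is that $h'\ne 0$: if $h'=0$ then the position $b_1+M_{i_1}e$ would lie in the stretched $\beta$-set of $\la^{(k_1)}$ while coinciding with $b_1+h+M_{i_2}e$, which by the hypothesis $b_1+h\notin B_{s_{k_1}}(\la^{(k_1)})$ does not lie in the stretched $\beta$-set. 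If $h'>0$ the translated data yields a direct $\xrightarrow{e}_2$ move with parameter $h'$; if $h'<0$ we instead swap the roles of $(\bla,\bs)$ and $(\bmu,\bs)$ and obtain a $\xrightarrow{e}_2$ move with parameter $-h'$. In either case the stretched configurations lie in the same $\ap_e$-equivalence class, which completes the proof.
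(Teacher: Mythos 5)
Your proof is correct and takes the same route as the paper, whose entire argument is the single line ``This follows from Proposition~\ref{GenBlock}''; you have simply supplied the details (reduction to single moves, the runner-wise description of stretching, and the transfer of $\xrightarrow{e}_1$ and $\xrightarrow{e}_2$ moves with the adjusted jump $h'$) that the paper leaves implicit.
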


\begin{proof}
This follows from Proposition~\ref{GenBlock}. 
\end{proof}

We now show that if we take an arbitrary block $\R \subset \A^r_e$ and stretch it sufficiently then we obtain a block $\R'$ with the property that $\Psi_r(\R')$ lies in an $r$-Rouquier block. From Corollary~\ref{RR}, it follows immediately that $\R'$ is itself a Rouquier block.

\begin{lemma} \label{GGR} Let $M \gg 0$ and set $\M=(0,M,\ldots,(e-1)M) \in \Z^{e}_{\geq 0}$. Let $\R$ be a  $\ap_e$-equivalence class of  $\A^r_e$. Then $\Psi_r(\Str(\R;\M))$ lies in a Rouquier block, and hence in an $r$-Rouquier block.
\end{lemma}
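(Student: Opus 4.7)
The plan is to fix a representative $(\bla, \bs) \in \R$ and to compute $\eta(\la'', s'')$, where $(\la'', s'') = \Psi_r(\Str((\bla, \bs); \M))$, explicitly in terms of the components' data. For each $1 \leq k \leq r$, write $\eta(\la^{(k)}, s_k) = (\brho^{(k)}, \bt^{(k)})$; stretching by $\M$ preserves the $e$-quotients and shifts the core data to $\tilde{t}_i^{(k)} = t_i^{(k)} + iM$. Since both $\Psi_r$ (Proposition~\ref{BlockPreserve}) and stretching preserve $\ap_e$-equivalence classes, the image $\Psi_r(\Str(\R;\M))$ is contained in a single block of $\A_e$, and because the Rouquier condition depends only on the weight and core of a representative, it suffices to verify it for $(\la'', s'')$.

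The main technical step, which I expect to be the principal obstacle, is to identify the $i$-th piece $(\rho_i'', t_i'')$ of $\eta(\la'', s'')$ by tracking beads through $\Psi_r$. Let $\tilde{C}_i^{(k)}$ denote the $\beta$-set of $(\rho_i^{(k)}, \tilde{t}_i^{(k)})$. Since $\psi_k$ sends position $me + i$ to $((m+1)r - k)e + i$, the reduced $\beta$-set $D_i$ of $(\rho_i'', t_i'')$ is
\[D_i = \bigsqcup_{k=1}^r \bigl\{(m+1)r - k \;\bigm|\; m \in \tilde{C}_i^{(k)}\bigr\}.\]
Viewing $\rho_i''$ via the analogue of $\eta$ for $r$-abacuses, sub-runner $j \in \{0, 1, \ldots, r-1\}$ of $D_i$ consists of the elements $\equiv j \pmod r$, which come entirely from the component with $k = r - j$ (taking $k = r$ when $j = 0$); the reduced sub-$\beta$-set is precisely $\tilde{C}_i^{(r-j)}$. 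Consequently the $r$-quotient of $\rho_i''$ is $(\rho_i^{(r)}, \rho_i^{(r-1)}, \ldots, \rho_i^{(1)})$ and its $r$-core data is $(\tilde{t}_i^{(r)}, \tilde{t}_i^{(r-1)}, \ldots, \tilde{t}_i^{(1)})$.

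From here everything is immediate. Since the sum of the $r$-core data equals the $s$-parameter of $\rho_i''$, we have $t_i'' = \sum_{k=1}^r \tilde{t}_i^{(k)} = \sum_k t_i^{(k)} + irM$, and therefore
\[t_{i+1}'' - t_i'' = \sum_{k=1}^r \bigl(t_{i+1}^{(k)} - t_i^{(k)}\bigr) + rM.\]
Moreover $|\rho_i''| = r\sum_k |\rho_i^{(k)}| + |\tau_i|$, where $\tau_i$ is the $r$-core of $\rho_i''$. Since simultaneously shifting all entries of the $r$-core data by a common constant yields the same partition, $\tau_i$ depends only on the differences $\tilde{t}_i^{(k)} - \tilde{t}_i^{(k')} = t_i^{(k)} - t_i^{(k')}$, which are independent of $M$; hence $\wt(\la'')$ is a constant depending only on $\R$ and our chosen representative, while $t_{i+1}'' - t_i''$ grows linearly in $M$. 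For $M$ sufficiently large we have $\wt(\la'') \leq t_{i+1}'' - t_i'' + 1$ for every $0 \leq i < e-1$, so $(\la'', s'')$ is a Rouquier partition and its block is a Rouquier block; since every Rouquier partition is automatically $r$-Rouquier, the final clause follows.
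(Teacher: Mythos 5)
Your proof is correct and follows essentially the same route as the paper: both arguments reduce to showing that applying $\Psi_r$ after stretching by $\M$ leaves the $e$-quotient (hence the weight) unchanged while shifting the $i$-th entry of the core data by $irM$, so that the Rouquier inequality holds once $M$ is large. The only difference is that you derive this key identity explicitly by tracking beads runner-by-runner through $\Psi_r$, whereas the paper asserts it without proof; your bead-tracking computation and the resulting formula $t_i'' = \sum_k t_i^{(k)} + irM$ are accurate.
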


\begin{proof}
Suppose that $(\bla,\bs) \in \R$. First note that if
\[\eta(\Psi_r(\bla,\bs)) = ((\rho_0,\rho_1,\ldots,\rho_{e-1}),(t_0,t_1,\ldots,t_{e-1}))\]
then
\[\eta(\Psi_r(\Str((\bla,\bs);\M))) = ((\rho_0,\rho_1,\ldots,\rho_{e-1}),(t_0,t_1+rM,\ldots,t_{e-1}+(e-1)rM)).\]
Set $T=\max\{t_i-t_{i+1} \mid 0 \leq i <e-1\}$ and choose $M$ such that $rM \geq T+ |\brho|$. If $0 \leq i < e-1$ then
\[t_{i+1} +(i+1)rM - (t_i +i rM) = rM-(t_i-t_{i+1}) \geq rM - T \geq |\brho|\]
so that $\Psi_r(\Str((\bla,\bs);\M))$ is a Rouquier partition, and hence an $r$-Rouquier partition. 
Applying this argument to every configuration in the block, we obtain the result. 
\end{proof}

\subsection{Equivalences} \label{Eqs} 
So far, our stretching operation has just defined a bijection between $\ap_e$-equivalence classes. For this to be of interest, we require that this bijection preserves some algebraic structure. Below, we recall Scopes' theorem on equivalences between blocks of the Hecke algebras of type $A$. One of the strengths of Scopes equivalence is that as well as showing certain blocks are Morita equivalent by defining functors between the module categories, it describes an explicit and very natural bijection between the Specht modules in each block which corresponds to the action of the functors. The notion of Scopes equivalence can be extended to Ariki-Koike algebras in a natural way. 

Before discussing Scopes equivalence, we introduce the notion of `decomposition equivalence'.  Suppose that $\R$ and $\R'$ are $\ap_e$-equivalence classes in $\A^{r}_e$ with $\widetilde{\R}$ a block of $\mathcal{H}$ and $\widetilde{\R'}$ a block of $\mathcal{H}'$. Then there exist $\bs,\bs' \in \Z^r$ such that $\R$ consists only of elements of the form $(\bla,\bs)$ and $\R'$ consists only of elements of the form $(\bla',\bs')$. Let $\mc$ (resp. $\mc'$) $\in I^r$ be such that $a_k \equiv s_k$ (resp. $a'_k \equiv s'_k$) $\mod e$ for $1 \leq k \leq r$. If $\Pi:\R \rightarrow \R'$ and $(\bla,\bs) \in \R$ with $\Pi(\bla,\bs)=(\bla',\bs')$, set $\Pi(\bla)=\bla'$. 
Using the notation above, we say that $\R$ and $\R'$ are decomposition equivalent if there is a bijection $\Pi:\R \rightarrow \R'$ such that for all $(\bmu,\bs), (\bla,\bs) \in \R$ we have
\begin{itemize}
\item $\bmu \in \Kla(\mc)$ if and only if $\Pi(\bmu) \in \Kla(\mc')$,
\item If $\bmu \in \Kla(\mc)$ then $[S^{\bla}:D^{\bmu}]_{\mathcal{H}} = [S^{\Pi(\bla)}:D^{\Pi(\bmu)}]_{\mathcal{H}'}$. 
\end{itemize}
We call the bijection $\Pi$ a decomposition equivalence map; essentially, it is set up so that it preserves block decomposition matrices. 

We now restrict our attention to $\hn$. Let $0 \leq i < e-1$. We define a bijective map $\Upsilon_i: \A_e \rightarrow \A_e$ as follows. Suppose $(\la,s) \in \A_e$ and suppose that $\eta(\la,s) = ((\rho_0,\rho_1,\ldots,\rho_{e-1}),(t_0,t_1,\ldots,t_{e-1}))$. Define $\Upsilon_i(\la,s)$ to be the abacus configuration $(\mu,s)$ such that $\eta(\mu,s)$ is obtained from $\eta(\la,s)$ by swapping $\rho_i$ and $\rho_{i+1}$ and swapping $t_i$ and $t_{i+1}$. 

Using Proposition~\ref{Whenr1} it is easy to see that if $\Ra$ is a $\ap_e$-equivalence class of $\A_e$ then $\Upsilon_i(\Ra)$ is also a $\ap_e$-equivalence class of $\A_e$. 

\begin{ex}
The bijection $\Upsilon_2$ swaps these two abacus configurations. The effect of $\Upsilon_2$ is simply that it interchanges runners $2$ and $3$. 

\begin{center}
\begin{tikzpicture}\tikzset{yscale=0.3,xscale=0.3}
\foreach \k in {0,1,2,3} {
\fill [color=brown] (\k-.1,0)--(\k-.1,6.5) -- (\k+0.1,6.5)-- (\k+.1,0)--(\k-.1,0);
\fill[color=brown](\k-.1,6.9)--(\k+.1,6.9)--(\k+.1,7.1)--(\k-.1,7.1)--(\k-.1,6.9);
\fill[color=brown](\k-.1,7.2)--(\k+.1,7.2)--(\k+.1,7.4)--(\k-.1,7.4)--(\k-.1,7.2);
\fill[color=brown](\k-.1,-0.1)--(\k+.1,-0.1)--(\k+.1,-0.3)--(\k-.1,-0.3)--(\k-.1,-0.1);
\fill[color=brown](\k-.1,-0.4)--(\k+.1,-0.4)--(\k+.1,-0.6)--(\k-.1,-0.6)--(\k-.1,-0.6);
}
\foreach \k in {4.5,6.5} {
\fill[color=bead] (0,\k) circle (10pt);}
\foreach \k in {2.5,3.5,5.5,6.5} {
\fill[color=bead] (1,\k) circle (10pt);}
\foreach \k in {1.5,3.5,4.5,6.5} {
\fill[color=bead] (2,\k) circle (10pt);}
\foreach \k in {0.5,1.5,3.5,5.5,6.5} {
\fill[color=bead] (3,\k) circle (10pt);}

\draw(-.5,7)--(3.3,7); 
\foreach \a in {0,1,2,3} {
\foreach \k in {0.5,...,6.5}{
\node at (\a,\k)[color=bead]{$-$};};};
\end{tikzpicture} \qquad \qquad 
\begin{tikzpicture}\tikzset{yscale=0.3,xscale=0.3}
\foreach \k in {0,1,2,3} {
\fill [color=brown] (\k-.1,0)--(\k-.1,6.5) -- (\k+0.1,6.5)-- (\k+.1,0)--(\k-.1,0);
\fill[color=brown](\k-.1,6.9)--(\k+.1,6.9)--(\k+.1,7.1)--(\k-.1,7.1)--(\k-.1,6.9);
\fill[color=brown](\k-.1,7.2)--(\k+.1,7.2)--(\k+.1,7.4)--(\k-.1,7.4)--(\k-.1,7.2);
\fill[color=brown](\k-.1,-0.1)--(\k+.1,-0.1)--(\k+.1,-0.3)--(\k-.1,-0.3)--(\k-.1,-0.1);
\fill[color=brown](\k-.1,-0.4)--(\k+.1,-0.4)--(\k+.1,-0.6)--(\k-.1,-0.6)--(\k-.1,-0.6);
}
\foreach \k in {4.5,6.5} {
\fill[color=bead] (0,\k) circle (10pt);}
\foreach \k in {2.5,3.5,5.5,6.5} {
\fill[color=bead] (1,\k) circle (10pt);}
\foreach \k in {1.5,3.5,4.5,6.5} {
\fill[color=bead] (3,\k) circle (10pt);}
\foreach \k in {0.5,1.5,3.5,5.5,6.5} {
\fill[color=bead] (2,\k) circle (10pt);}

\draw(-.5,7)--(3.3,7); 
\foreach \a in {0,1,2,3} {
\foreach \k in {0.5,...,6.5}{
\node at (\a,\k)[color=bead]{$-$};};};
\end{tikzpicture}
\end{center}
\end{ex}

We now describe Scopes' theorem. These results were initially proved by Scopes for the symmetric group algebras~\cite[Lemma~2.4 and Theorem~4.2]{Scopes} and then generalised to $\hn$~\cite{Jost}. 

\begin{theorem}[Scopes Equivalence] \label{Scopes} 
Suppose that $\Ra = \R(\bt;w)$ is a $\ap_e$-equivalence class of $\A_e$. Let $0 \leq i < e-1$ and suppose that $t_{i+1}-t_i \geq w$. 
\begin{itemize}
\item The map $\Upsilon_i$ restricts to a decomposition equivalence map on $\Ra$. 
\item The blocks $\widetilde{\Ra}$ and $\widetilde{\Upsilon_i(\Ra)}$ are Morita equivalent.
\end{itemize}
\end{theorem}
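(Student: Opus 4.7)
The plan is to follow Scopes' original argument for the symmetric group~\cite{Scopes}, as extended to $\hn$ by Jost~\cite{Jost}, which rests on a detailed analysis of the $i$-induction and $i$-restriction functors together with the abacus combinatorics.

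First I would unpack the Scopes condition $b := t_{i+1}-t_i \geq w$ combinatorially. Since every $(\la,s) \in \Ra$ has weight $w$, the inequality forces a rigid abacus picture: every bead on runner $i$ of $(\la,s)$ has a bead in the same row on runner $i+1$, and the bottom $b-w$ occupied rows of runner $i+1$ contain no bead on runner $i$. Consequently the removable $i$-nodes of $\la$ are exactly those corresponding to beads on runner $i+1$ whose immediately-left neighbour on runner $i$ is empty, and none of these can be "blocked" by intervening addable $i$-nodes. The bijection $\Upsilon_i$ then has an explicit partition-level description: swap all $b$ of the swappable beads from runner $i+1$ onto runner $i$.

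Second, I would set up the $i$-restriction and $i$-induction functors $E_i, F_i$ on the module categories of the various $\hn$, together with their divided powers $E_i^{(b)}, F_i^{(b)}$. By the cellular branching rule, $F_i^{(b)} S^\la$ admits a Specht filtration indexed by sequences of $b$ addable $i$-nodes. Under the Scopes condition, every such sequence except the canonical one (which fills all runner-$i$ slots and produces $\Upsilon_i(\la)$) passes through partitions of smaller weight, and hence through blocks different from $\widetilde{\Upsilon_i(\Ra)}$. Projecting onto the target block then yields $\pi_{\Upsilon_i(\Ra)} F_i^{(b)} S^\la \cong S^{\Upsilon_i(\la)}$, and the symmetric computation gives $\pi_{\Ra} E_i^{(b)} S^{\Upsilon_i(\la)} \cong S^\la$. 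Combined with the analogous statement on simples via the modular branching rule ($F_i^{(b)} D^\mu$ has $D^{\Upsilon_i(\mu)}$ as a quotient after projection, with all other composition factors in blocks of smaller weight), one deduces both $\mu \in \Kla$ iff $\Upsilon_i(\mu) \in \Kla$, and the equality of decomposition numbers $[S^\la:D^\mu]_{\widetilde{\Ra}} = [S^{\Upsilon_i(\la)}:D^{\Upsilon_i(\mu)}]_{\widetilde{\Upsilon_i(\Ra)}}$. This is the first bullet. For the second bullet, the two projected divided-power functors are exact and (by the isomorphisms above on projective generators, or equivalently by comparing their effect on indecomposable projectives via BGG reciprocity) mutually quasi-inverse equivalences between the block categories, yielding the Morita equivalence.

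The main obstacle is the filtration/projection argument in the second step: one must control precisely which Specht subquotients of $F_i^{(b)} S^\la$ survive the projection to $\widetilde{\Upsilon_i(\Ra)}$, rather than just knowing that $S^{\Upsilon_i(\la)}$ appears once in the filtration. This is exactly where the numerical hypothesis $b \geq w$ enters: it guarantees that every "intermediate" partition obtained by adding fewer than $b$ $i$-nodes (or permuting the order of additions non-canonically) acquires strictly smaller weight on runners $i, i+1$ and therefore falls into a different $\ap_e$-equivalence class. The bookkeeping required for this, while conceptually clean, is the technically delicate heart of the argument and is carried out in detail in~\cite{Scopes} and~\cite{Jost}.
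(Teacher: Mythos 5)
The paper offers no proof of this theorem: it is quoted verbatim from Scopes~\cite{Scopes} and Jost~\cite{Jost}, and your sketch is exactly the standard argument from those references (divided powers of $i$-induction/restriction, Specht filtrations from the branching rule, modular branching for simples, mutually inverse functors on the two blocks), so it matches the approach the paper relies on. One detail is worth tightening: the hypothesis $t_{i+1}-t_i\geq w$ forces every $(\la,s)\in\Ra$ to have exactly $b=t_{i+1}-t_i$ removable and \emph{zero} addable nodes on the $i/(i{+}1)$ runner boundary, so every Specht factor of the divided-power functor applied to $S^\la$ is already $S^{\Upsilon_i(\la)}$ and no projection onto the target block is needed --- your justification for that projection (``non-canonical sequences pass through smaller-weight partitions'') is not the mechanism at work, since all length-$b$ sequences of node removals terminate at the same partition; note also that because $|\Upsilon_i(\la)|=|\la|-b$ the functor sending $S^\la$ to $S^{\Upsilon_i(\la)}$ is restriction rather than induction.
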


We note that decomposition equivalence and Morita equivalence are independent. 
We now look at a more trivial equivalence (in fact an equality). We define an invertible map $\upsilon: \A_e \rightarrow \A_e$ as follows. Suppose $(\la,s) \in \A_e$ and that $\eta(\la,s) = ((\rho_0,\rho_1,\ldots,\rho_{e-1}),(t_0,t_1,\ldots,t_{e-1}))$. Define $\upsilon(\la,s) = (\la,s+1)$ so that $\eta(\la,s+1)=(\rho_{e-1},\rho_0,\rho_1,\ldots,\rho_{e-2}),(t_{e-1}+1,t_0,t_1,\ldots,t_{e-2}))$. Note that the Hecke algebra $\mathcal{H}_{n}=\mathcal{H}_{1,n}(q,q^s)$ is independent of the choice of $s$. The following result then follows by definition. 

\begin{lemma} \label{Applyupsilon}
Suppose that $\Ra$ is a $\ap_e$-equivalence class of $\A_e$. Then $\widetilde{\Ra} = \widetilde{\upsilon(\Ra)}$.
\end{lemma}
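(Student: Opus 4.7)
The plan is to unwind the definitions and use two key observations. First, since $\Ra \subset \A_e$ falls in the case $r=1$, I would point out that the Hecke algebra $\hn = \mathcal{H}_{1,n}(q,q^s)$ is independent of the choice of $s$ (this is explicitly recorded just before the lemma's statement, and also reflects the earlier observation that $\sim_{e,(a)}$ and $\sim_{e,(a')}$ coincide for any $a,a' \in \Z$ when $r=1$). In particular, both $\widetilde{\Ra}$ and $\widetilde{\upsilon(\Ra)}$ are blocks of the \emph{same} algebra $\hn$.

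Second, I would observe that the map $\upsilon$ changes only the abacus parameter $s$ and not the underlying partition: by definition $\upsilon(\la,s) = (\la,s+1)$. Since every element of a $\ap_e$-equivalence class shares the same second coordinate, one can write
\[\Ra = \{(\la,s) \mid \la \in X\}\]
for a fixed $s \in \Z$ and a fixed subset $X \subseteq \La$, so that
\[\upsilon(\Ra) = \{(\la,s+1) \mid \la \in X\}.\]
Thus the set of partitions indexing elements of $\Ra$ is identical to the set indexing elements of $\upsilon(\Ra)$, namely $X$.

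Combining the two observations, both $\widetilde{\Ra}$ and $\widetilde{\upsilon(\Ra)}$ are the block of $\hn$ containing exactly the Specht modules $\{S^\la \mid \la \in X\}$, so they coincide as blocks of $\hn$. The statement is therefore a direct consequence of the $r=1$ convention together with the fact that $\upsilon$ does not alter the partition component. Because this is purely a bookkeeping argument about how $\upsilon$ is defined and about the independence of $\hn$ from $s$, there is no real obstacle; the only thing to be careful about is to phrase the identification $\A_e \leftrightarrow \La \times \Z$ consistently, so that the equality $\widetilde{\Ra}=\widetilde{\upsilon(\Ra)}$ is interpreted as an equality of blocks in a single fixed algebra rather than as some equivalence between blocks of a priori different algebras.
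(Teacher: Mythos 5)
Your proposal is correct and matches the paper's argument, which likewise derives the lemma directly from the definition of $\upsilon$ together with the observation (stated immediately before the lemma) that $\mathcal{H}_n=\mathcal{H}_{1,n}(q,q^s)$ is independent of $s$. You have simply spelled out the bookkeeping that the paper compresses into ``follows by definition.''
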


This puts us in a position to reprove the well-known result that when $r=1$, any two Rouquier blocks of the same weight are both Morita equivalent and decomposition equivalent. We give a proof since we will use similar techniques to study the case that $r \geq 1$. 

\begin{lemma} \label{baseR}
Let $\bt=(t_0,t_1,\ldots,t_{e-1})$ and suppose that $\Ra=\Ra(\bt;w)$ is a Rouquier block in $\A_e$ where $w \geq 1$. Suppose that $0 \leq i \leq e-1$ and set $\bt'=(t_0,\ldots,t_{i-1},t_{i}+1,\ldots,t_{e-1}+1)$ and $\Ra'=\Ra(\bt';w)$. 
Then the blocks $\widetilde{\Ra}$ and $\widetilde{\Ra'}$ are Morita equivalent. Furthermore, the map $\Pi:\Ra \rightarrow \Ra'$ which sends $(\la,s) \in \Ra$ with $\eta(\la,s) = (\brho,\bt)$ to $(\la',s+e-i) \in \Ra'$ where $\eta(\la',s+e-i)=(\brho,\bt')$ is a decomposition equivalence map. 
\end{lemma}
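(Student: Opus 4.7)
The plan is to factor $\Pi$ as $\upsilon^{e-i}$ followed by a chain of Scopes moves. By Lemma~\ref{Applyupsilon}, $\widetilde{\Ra}=\widetilde{\upsilon^{e-i}(\Ra)}$, and this identification sends a configuration $(\la,s)\in\Ra$ with $\eta(\la,s)=(\brho,\bt)$ to the configuration in $\upsilon^{e-i}(\Ra)$ whose $\eta$-data is
\[\bigl((\rho_i,\ldots,\rho_{e-1},\rho_0,\ldots,\rho_{i-1}),\,(t_i+1,\ldots,t_{e-1}+1,t_0,\ldots,t_{i-1})\bigr).\]
So it suffices to exhibit a chain of Scopes equivalences connecting $\upsilon^{e-i}(\Ra)$ to $\Ra'$: by Theorem~\ref{Scopes} every such step is simultaneously a Morita and a decomposition equivalence, and Lemma~\ref{PartBij} ensures that the resulting composite bijection is the unique one described in the statement.

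Working from the $\Ra'$ side, where the Scopes gap conditions are easiest to read off, I would apply the maps in the order
\[\Upsilon_{i-1},\Upsilon_{i-2},\ldots,\Upsilon_0;\quad\Upsilon_i,\Upsilon_{i-1},\ldots,\Upsilon_1;\quad\ldots;\quad\Upsilon_{e-2},\Upsilon_{e-3},\ldots,\Upsilon_{e-i-1}.\]
The first block of $i$ swaps migrates $t_i+1$ from position $i$ to position $0$; the second block moves $t_{i+1}+1$ to position $1$; and so on, performing $i(e-i)$ swaps in total. The $e$-quotient components are permuted in lock-step with the $t$-values, so after all swaps the tuple becomes $(t_i+1,\ldots,t_{e-1}+1,t_0,\ldots,t_{i-1})$ with quotient $(\rho_i,\ldots,\rho_{e-1},\rho_0,\ldots,\rho_{i-1})$, which is precisely the $\eta$-data of $\upsilon^{e-i}(\Ra)$. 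The degenerate case $i=0$ needs no swaps, since $\bt'=\bt+(1,\ldots,1)$ and $\Ra'=\upsilon^e(\Ra)$ directly.

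The main point to verify at each step is the Scopes inequality. In the algorithm above, every swap interchanges a shifted value $t_k+1$ (some $i\leq k\leq e-1$) with an unshifted value $t_l$ (some $0\leq l\leq i-1$) that currently lies in the adjacent position immediately to its left; neither the relative order of the unshifted values nor that of the shifted values is ever disturbed, so the relevant gap is
\[(t_k+1)-t_l\;\geq\;(k-l)(w-1)+1\;\geq\;w,\]
the first inequality telescoping the Rouquier bounds $t_{j+1}-t_j\geq w-1$ and the second using $k\geq i>l$ together with $w\geq 1$. This uniform estimate is the only non-trivial obstacle; once the order of swaps is fixed as above, it holds at every stage, so the chain is well-defined and Theorem~\ref{Scopes} delivers the Morita equivalence and the decomposition equivalence map $\Pi$.
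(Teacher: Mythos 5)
Your proof is correct and follows exactly the route the paper takes: factor $\Pi$ as $\upsilon^{e-i}$ (Lemma~\ref{Applyupsilon}) followed by a chain of Scopes moves (Theorem~\ref{Scopes}), with the Rouquier condition $t_{j+1}-t_j\geq w-1$ guaranteeing the gap $\geq w$ needed at each swap. The paper's proof is just a terser version of this; your explicit ordering of the $\Upsilon_j$ and the telescoped inequality $(t_k+1)-t_l\geq(k-l)(w-1)+1\geq w$ supply details the paper leaves to the reader.
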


\begin{proof} Write $\Ra \leftrightarrow \Ra'$ to indicate that Morita and decomposition equivalence both hold. 
Then we have 
\begin{align*}
\Ra(t_0,\ldots,t_{i-1},t_{i},\ldots,t_{e-1};w) & \leftrightarrow \Ra(t_{i}+1,\ldots,t_{e-1}+1,t_{0},\ldots,t_{i-1};w) && \text{repeatedly applying Lemma~\ref{Applyupsilon}} \\
 & \leftrightarrow \Ra(t_{0},\ldots,t_{i-1},t_{i}+1,\ldots,t_{e-1}+1;w) && \text{repeatedly applying Theorem~\ref{Scopes}}.
\end{align*}
Composing the decomposition maps completes the proof. Note that the condition for $\Ra$ to be a Rouquier block allows us to use Theorem~\ref{Scopes}. 
\end{proof}

\begin{corollary} \label{CEq}
Suppose that $\Ra=\Ra(\bt;w)$ and $\Ra'=(\bt';w)$ are Rouquier blocks in $\A_e$ of weight $w$. Then there exist $s,s' \in \Z$ such that 
\begin{align*}
\Ra & =\{(\la,s) \in \A_e \mid \eta(\la,s) = (\brho,\bt) \text{ where } |\brho|=w\}, \\
\Ra' & =\{(\la,s') \in \A_e \mid \eta(\la,s') = (\brho,\bt') \text{ where } |\brho|=w\}.
\end{align*}
Define $\Pi: \Ra \rightarrow \Ra'$ so that $\Pi(\brho,\bt)=\Pi(\brho,\bt')$ for $(\brho,\bt) \in \Ra$. 
Then the blocks $\widetilde{\Ra}$ and $\widetilde{\Ra'}$ are Morita equivalent and the map $\Pi$ is a decomposition equivalence map. 
\end{corollary}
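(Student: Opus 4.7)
The plan is to iterate Lemma~\ref{baseR} to connect $\widetilde{\Ra}$ and $\widetilde{\Ra'}$ through a chain of Morita and decomposition equivalences, taking care that every intermediate $\ap_e$-equivalence class remains a Rouquier block so that the hypothesis of Lemma~\ref{baseR} stays available. For $0 \leq i \leq e-1$ write $v_i = (0,\ldots,0,1,\ldots,1)\in \Z^e$ with $1$'s in positions $i,\ldots,e-1$; then Lemma~\ref{baseR} provides the equivalence $\Ra(\bt;w) \leftrightarrow \Ra(\bt+v_i;w)$, whose decomposition equivalence map acts by $(\brho,\bt)\mapsto(\brho,\bt+v_i)$. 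Because Morita and decomposition equivalences are symmetric, the reverse step $\Ra(\bt;w)\leftrightarrow \Ra(\bt-v_i;w)$ is valid whenever both sides are Rouquier. Since $\{v_0,\ldots,v_{e-1}\}$ is a $\Z$-basis of $\Z^e$, we may write $\bt'-\bt=\sum_{i=0}^{e-1}a_iv_i$ with integer coefficients.

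The key observation is that a forward $v_i$-move either preserves (if $i=0$) or increases by one (if $i\geq 1$) the single difference $t_i-t_{i-1}$ and leaves all other $t_j-t_{j-1}$ unchanged. Hence forward moves can never violate the Rouquier inequality $t_j-t_{j-1}\geq w-1$; the only danger of leaving the Rouquier region comes from backward moves. The strategy is therefore to first inflate $\bt$ to a configuration whose differences are very large, then carry out the net shift $\bt'-\bt$ there with a comfortable margin of safety, and finally deflate to $\bt'$.

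Concretely, let $K=\sum_{i=0}^{e-1}|a_i|$ and set $\bt_K=\bt+K\sum_{i=1}^{e-1}v_i$ and $\bt_K'=\bt'+K\sum_{i=1}^{e-1}v_i$, which both have all differences at least $K+w-1$. I will chain three sub-paths: \textbf{(i)} from $\Ra(\bt;w)$ to $\Ra(\bt_K;w)$ by $K(e-1)$ forward $v_i$-moves, all Rouquier because forward moves only increase differences; \textbf{(ii)} from $\Ra(\bt_K;w)$ to $\Ra(\bt_K';w)$ by applying $|a_i|$ $v_i$-moves (in the direction of the sign of $a_i$) for each $i$, one at a time, so that each step alters at most one difference by at most $1$ and the running differences stay $\geq (K+w-1)-K = w-1$ throughout; and \textbf{(iii)} from $\Ra(\bt_K';w)$ to $\Ra(\bt';w)$ by reversing (i) with $\bt'$ in place of $\bt$, which remains Rouquier because the endpoint $\bt'$ is. Composing the equivalences produced by the individual applications of Lemma~\ref{baseR} yields $\widetilde{\Ra}\leftrightarrow\widetilde{\Ra'}$, and since every step preserves the $e$-quotient, the composite decomposition equivalence map sends $(\brho,\bt)$ to $(\brho,\bt')$, which is precisely $\Pi$.

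The one point demanding care is the verification that each intermediate block really is Rouquier so Lemma~\ref{baseR} can be applied; the inflation-by-$K$ buffer handles this uniformly, and I do not anticipate any serious obstacle beyond this combinatorial bookkeeping.
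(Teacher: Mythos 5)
Your proof is correct and follows essentially the same route as the paper: both arguments simply iterate Lemma~\ref{baseR} along a chain of Rouquier blocks, the only difference being that the paper normalises each block downwards to the canonical minimal Rouquier block $\Ra(w-1,2(w-1),\ldots,e(w-1);w)$ while you inflate both blocks upwards to a common safe region before transferring between them; either path works and your bookkeeping with the buffer $K$ is sound. The one point you omit is the case $w=0$: Lemma~\ref{baseR} is stated only for $w\geq 1$, so your chain of applications is unavailable there, and that case needs the separate (trivial) observation, made in the paper, that any two weight-zero blocks are Morita and decomposition equivalent.
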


\begin{proof}
Any two blocks of weight $0$ in $\A_e$ are Morita equivalent and decomposition equivalent (either apply Scopes equivalence or notice that a block of weight zero contains exactly one simple module) so suppose that $w\geq 1$. Applying Lemma~\ref{baseR} repeatedly, we see any Rouquier block of weight $w$ is both Morita equivalent and decomposition equivalent to the Rouquier block $\Ra(w-1,2(w-1),\ldots,e(w-1);w)$ with the decomposition equivalence map acting as described on the Specht modules. The result follows. 
\end{proof}
  
It is natural to ask to what extent the Scopes equivalences can be extended to $\h$. We generalise the map $\Upsilon_i:\A_e \rightarrow \A_e$ to a map $\Upsilon_i:\A^r_e \rightarrow \A^r_e$ in the natural way so that if $(\bla,\bs) \in \A^r_e$ is thought of as a tuple of abacuses then $\Upsilon_i$ acts as before on each component. We add a map $\Upsilon_{e-1}$ which acts on runners $0$ and $e-1$ in the natural way (by swapping the runners and then adding an extra bead to runner $0$). Now suppose that $\R$ is an $\ap_e$-equivalence class in $\A_e^r$ and $0 \leq i < e-1$ (resp. $i=e-1$) such that if $(\bla,\bs) \in \R$ with $\eta(\la^{(k)},s_k)=((\rho^k_0,\rho^k_1,\ldots,\rho^k_{e-1}),(t^k_0,t^k_1,\ldots,t_{e-1}^k))$ then $|\brho^k| \leq t^k_{i+1}-t^k_i$ for all $1 \leq k \leq r$ (resp. $|\brho^k| \leq t^k_{0}-t^k_{e-1}+1$ for all $1 \leq k \leq r$). We say that $\R$ and $\Upsilon_i(\R)$ are related by a Scopes move and we define an equivalence relation, Scopes equivalence, on the $\approx_e$-equivalence classes of $\A^r_e$ to be the relation generated by Scopes moves.   
Webster refers to a block that is Scopes equivalent to a Rouquier block as a RoCK block (see~\cite[Theorem~B]{Webster}). 

The following is a result of  Dell'Arciprete. 

\begin{theorem} [\cite{DA} Proposition~5.5] \label{DA}
Let $0 \leq i <e$. Suppose that $\R$ is an $\ap_e$-equivalence class in $\A_e^r$ such that $\Upsilon_i$ is a Scopes move. Then $\Upsilon_i$ restricts to a decomposition equivalence map from $\R$ to $\Upsilon(\R)$. 
\end{theorem}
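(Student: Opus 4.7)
The plan is to realise the combinatorial map $\Upsilon_i$ as the decategorification of an exact equivalence of blocks, via a divided-power $i$-induction functor. First I would give a combinatorial reformulation of $\Upsilon_i$ on $\R$: the swap of $(\rho^k_i,t^k_i)$ with $(\rho^k_{i+1},t^k_{i+1})$ on each component $k$ is the same as shifting every bead on runner $i$ that sits above an empty position on runner $i+1$ across to runner $i+1$, with the appropriate cyclic adjustment when $i=e-1$. The Scopes hypothesis $|\brho^k|\leq t^k_{i+1}-t^k_i$ (resp.\ $|\brho^k|\leq t^k_0-t^k_{e-1}+1$) says that in each component the gap between runner $i$ and runner $i+1$ strictly dominates the quotient weight, which forces every $\bla\in\R$ to have the same number $d$ of addable $i$-nodes (with respect to the residue $i$ determined by $\mc$) and no removable $i$-nodes; moreover $\Upsilon_i(\bla)$ is then precisely the multipartition obtained from $\bla$ by adding all $d$ addable $i$-nodes simultaneously.

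Next I would invoke the standard branching behaviour of Specht modules over $\h$ (Ariki--Mathas, and in the KLR picture Brundan--Kleshchev--Wang): the $i$-restriction functor $E_i$ annihilates each $S^\bla$ for $\bla\in\R$, while the divided-power induction $F_i^{(d)}$ carries a Specht filtration whose layers are indexed by all ways of adding $d$ $i$-nodes, and hence collapses to a single term, giving $F_i^{(d)}S^\bla\cong S^{\Upsilon_i(\bla)}$ for every $\bla\in\R$. In particular $\Upsilon_i$ is realised as a bijection $\R\to\Upsilon_i(\R)$ that commutes with the $F_i^{(d)}$-action on Grothendieck groups. Applying Chuang--Rouquier's $\mathfrak{sl}_2$-categorification theorem to the $i$-string in $\h$, the balanced structure of $i$-nodes across $\widetilde\R$ upgrades $F_i^{(d)}$ to an equivalence of abelian categories $\widetilde\R\text{-mod}\to\widetilde{\Upsilon_i(\R)}\text{-mod}$ which sends $D^\bmu\mapsto D^{\Upsilon_i(\bmu)}$ and, by exactness, preserves composition multiplicities:
\[[S^\bla:D^\bmu]_{\widetilde\R}=[S^{\Upsilon_i(\bla)}:D^{\Upsilon_i(\bmu)}]_{\widetilde{\Upsilon_i(\R)}}.\]

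The main obstacle is the combinatorial verification in the first step: one must check that the uniform component-wise bound really does force \emph{every} $\bla\in\R$ (not merely the distinguished ones whose $e$-quotients realise the core) to have exactly $d$ addable $i$-nodes and no removable $i$-nodes, and that $\Upsilon_i$ coincides with ``add all such nodes''. The case $i=e-1$ is the most delicate, because the swap mixes runners $e-1$ and $0$ with a global bead-count shift, so one must track residues modulo $e$ through the cyclic identification and confirm that every node involved in the transition carries residue $e-1$ with respect to $\mc$. Once this identification is in place the branching and categorification steps are essentially formal, and indeed Webster's theorem (cited in the excerpt as~\cite{Webster}) further promotes the equivalence to a Morita equivalence.
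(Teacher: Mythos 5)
The paper offers no proof of this statement: it is imported wholesale from Dell'Arciprete~\cite{DA}, whose argument generalises Scopes' and Jost's original method --- one shows directly that divided-power restriction of the relevant residue sends each $S^{\bla}$ to $S^{\Upsilon_i(\bla)}$ and each $D^{\bmu}$ to $D^{\Upsilon_i(\bmu)}$, and reads the equality of decomposition numbers off the exactness of that functor, without ever constructing an equivalence of categories. Your route is genuinely different: you realise $\Upsilon_i$ as the decategorification of a Chuang--Rouquier $\mathfrak{sl}_2$-equivalence. That is a legitimate strategy, and if carried through it proves strictly more than the statement asks --- it would also yield the Morita equivalence which the paper only obtains from Webster's later work (Theorem~\ref{C:Scopes}); Webster's proof is in essence the categorical argument you outline. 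What Dell'Arciprete's elementary route buys is independence from the full categorification machinery; what yours buys is the stronger conclusion.

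Two points need repair before your sketch is a proof. First, your combinatorial dictionary runs the wrong way: under the hypothesis $|\brho^k|\le t^k_{i+1}-t^k_i$ it is runner $i+1$ that carries the surplus beads in every component, so $\Upsilon_i$ moves beads from runner $i+1$ to runner $i$ and \emph{removes} nodes (of residue $i+1$ in the paper's conventions, since moving a bead from position $b$ to $b-1$ deletes a node of residue $b\bmod e$); thus $\Upsilon_i$ on $\R$ is realised by divided-power \emph{restriction} $E^{(d)}$, the relevant vanishing is the absence of \emph{addable} nodes of that residue, and your $F^{(d)}$ describes the inverse bijection $\Upsilon_i(\R)\to\R$. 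Moreover for $r\ge 2$ the individual differences $t^k_{i+1}-t^k_i$ are not block invariants --- only $\sum_k(t^k_{i+1}-t^k_i)$ is --- so ``the same number $d$'' must refer to the total over all components. Second, the verification you flag as the main obstacle does go through and should be recorded: the lowest bead on runner $i$ of component $k$ sits in row $(\rho^k_i)_1+t^k_i-1$ and the highest gap on runner $i+1$ sits in row $t^k_{i+1}-\ell(\rho^k_{i+1})$, and
\[
(\rho^k_i)_1+\ell(\rho^k_{i+1})\;\le\;|\rho^k_i|+|\rho^k_{i+1}|\;\le\;|\brho^k|\;\le\;t^k_{i+1}-t^k_i
\]
forces every row carrying a bead on runner $i$ to carry one on runner $i+1$ as well; hence no $\bla\in\R$ has an addable node of the relevant residue, $\widetilde\R$ sits at the extreme of its $\mathfrak{sl}_2$-string, and the Rickard complex collapses to $E^{(d)}$ (the case $i=e-1$ is the same computation after the row shift accounting for the $+1$ in the hypothesis). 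You still owe the identification of $\tilde e^{\,\max}$ with $\Upsilon_i$ on Kleshchev multipartitions to justify $D^{\bmu}\mapsto D^{\Upsilon_i(\bmu)}$, but with these pieces in place the argument is complete.
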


In fact, Dell'Arciprete has a nicer combinatorial condition on the block than the one we give above; the current form (which does not introduce additional notation) is sufficient for our needs. 
The following theorem appeared as Conjecture~1 in the first draft of this paper and was subsequently proved by Webster. 

\begin{theorem} \cite{Webster} \label{C:Scopes}
Let $0 \leq i <e$. Suppose that $\R$ is an $\ap_e$-equivalence class in $\A_e^r$ such that $\Upsilon_i$ is a Scopes move. Then $\R$ and $\Upsilon_i(\R)$ are Morita equivalent. 
\end{theorem}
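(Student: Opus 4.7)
The plan is to realise the Morita equivalence through the categorical $\widehat{\mathfrak{sl}}_e$-action on the direct sum of module categories of the Ariki--Koike algebras $\mathcal{H}_{r,n}$ for $n\geq 0$, in which the Chevalley generators act as $i$-restriction and $i$-induction functors $E_i,F_i$ sending modules between adjacent blocks. Under this action, the Scopes move $\Upsilon_i$ corresponds to the simple reflection $s_i$ in the affine Weyl group acting on weights of the level-$r$ Fock space, so $\widetilde{\R}$ and $\widetilde{\Upsilon_i(\R)}$ sit in the same Weyl orbit.

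First I would work in the Khovanov--Lauda--Rouquier presentation of $\h$ (via the isomorphism \cite{BK:Blocks}), in which $E_i$ and $F_i$ are given by multiplication by explicit idempotents and are in particular exact. Theorem~\ref{DA} already supplies a bijection $\Upsilon_i\colon\R\to\Upsilon_i(\R)$ preserving decomposition numbers, so the simple modules on the two sides are in bijection with matching multiplicities, which is the combinatorial shadow of any Morita equivalence. The Scopes hypothesis $|\brho^k|\leq t^k_{i+1}-t^k_i$ translates into a uniform statement about the Lie-theoretic $i$-strings of multipartitions in $\R$: there exists a single integer $d$, independent of $\bla\in\R$, such that every $\bla\in\R$ has exactly $d$ more addable $i$-nodes than removable $i$-nodes. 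For the wraparound case $i=e-1$, I would first apply Lemma~\ref{Applyupsilon} to shift multicharges, reducing to an internal runner swap handled by the $i<e-1$ case.

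Second, with $d$ as above, I would propose the divided power $F_i^{(d)}$, restricted to $\widetilde{\R}$ and corestricted to $\widetilde{\Upsilon_i(\R)}$, as the candidate Morita equivalence, with quasi-inverse $E_i^{(d)}$. The Chuang--Rouquier theory of categorical $\mathfrak{sl}_2$-actions constructs a Rickard complex realising the reflection $s_i$ as a derived equivalence between these two blocks; the aim is to prove that under the Scopes condition this complex concentrates in a single cohomological degree and reduces on the nose to $F_i^{(d)}$. This upgrades the derived equivalence to an equivalence of abelian categories, and hence a Morita equivalence.

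The main obstacle is exactly this collapse of the Rickard complex, which requires showing that the higher cohomology of $F_i^{(d)}$ vanishes on every module in $\widetilde{\R}$. Concretely one needs uniform control on $\mathrm{Ext}$-groups between simples within $\widetilde{\R}$, and ultimately that $F_i^{(d)}$ restricted to $\widetilde{\R}$ is both exact and fully faithful. This is the technical core of Webster's argument and is carried out via the $2$-representation-theoretic framework of his categorical RoCK blocks, which provides the structural results guaranteeing the required vanishings precisely under the Scopes inequality.
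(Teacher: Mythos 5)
The paper does not prove Theorem~\ref{C:Scopes}: it is quoted from Webster~\cite{Webster} (having appeared as Conjecture~1 in the first version of this paper), so there is no internal argument to compare against line by line. Your outline is broadly the strategy of the cited proof and of its $r=1$ ancestor in Chuang--Rouquier: realise $\Upsilon_i$ as the simple reflection $s_i$ on weights of the level-$r$ Fock space and show that, at an extremal weight, the divided-power functor $F_i^{(d)}$ (with quasi-inverse $E_i^{(d)}$) gives an equivalence of the two block categories. Two remarks on the route: once extremality is in hand, the Chuang--Rouquier axioms for an integrable categorical $\mathfrak{sl}_2$-action already force $E_i^{(d)}$ and $F_i^{(d)}$ to be mutually inverse equivalences between the two extremal weight spaces, so the detour through the Rickard complex and its collapse, while correct, is heavier than necessary; and for the wraparound case $i=e-1$ you want Lemma~\ref{Applyupsilon2} (the $r\geq 1$ version) rather than Lemma~\ref{Applyupsilon}.

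There is, however, one concrete error in the proposal. You translate the Scopes hypothesis $|\brho^k|\leq t^k_{i+1}-t^k_i$ into the statement that every $\bla\in\R$ has exactly $d$ more addable than removable $i$-nodes for a fixed $d$. That statement is true of \emph{every} block, with $d=\langle\mu,\alpha_i^\vee\rangle$ where $\mu$ is the weight of the block, and so it carries no information about the Scopes condition and cannot drive the proof: for a non-extremal block the same count holds and $F_i^{(d)}$ is emphatically not an equivalence. What the Scopes inequality actually buys is extremality of the weight, i.e.\ the vanishing of the adjacent weight space $V_{\mu+\alpha_i}$ --- combinatorially, that every multipartition in the relevant one of $\R$, $\Upsilon_i(\R)$ has \emph{no} removable $i$-nodes at all (and hence exactly $d$ addable ones), the other having no addable $i$-nodes. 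This is the hypothesis under which the $i$-string of blocks has $\widetilde{\R}$ and $\widetilde{\Upsilon_i(\R)}$ at its two ends and the divided-power functors invert each other; without identifying it, the ``technical core'' you defer to Webster is not merely technical but is the entire content of the theorem. As a description of how the cited proof proceeds your sketch is serviceable; as a standalone argument it has this gap at exactly the point where the hypothesis is used.
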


Now recall the map $\upsilon:\A_e \rightarrow \A_e$. We generalise it to a map $\upsilon:\A^r_e \rightarrow \A^r_e$ in the natural way by acting on each component. Suppose that $\bs=(s_1,s_2,\ldots,s_r) \in \Z^r$. Let ${\boldsymbol Q}=(q^{s_1},q^{s_2},\ldots,q^{s_r})$ and ${\boldsymbol Q'}=(q^{s_1+1},q^{s_2+1},\ldots,q^{s_r+1})$. For $n \geq 0$, the algebras $\mathcal{H}_{r,n}(q,{\boldsymbol Q})$ and $\mathcal{H}_{r,n}(q,{\boldsymbol Q'})$ are isomorphic. Hence we have an immediate analogue of  Lemma~\ref{Applyupsilon}. 

\begin{lemma} \label{Applyupsilon2}
Suppose that $\R$ is a $\ap_e$-equivalence class of $\A^r_e$. Then $\widetilde{\R} \cong \widetilde{\upsilon(\R)}$.
\end{lemma}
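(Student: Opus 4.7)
The plan is to make explicit the algebra isomorphism $\h(q, {\boldsymbol Q}) \cong \h(q, q{\boldsymbol Q})$ noted just before the lemma, and then check that it matches the block $\widetilde{\R}$ with $\widetilde{\upsilon(\R)}$. Since $\bs + \mathbf{1}$ yields parameters $q{\boldsymbol Q}$, I would define $\phi\colon \h(q, q{\boldsymbol Q}) \to \h(q, {\boldsymbol Q})$ by $T_0 \mapsto qT_0$ and $T_i \mapsto T_i$ for $i \geq 1$. The verification of the defining relations is direct: the quadratic and braid relations among $T_1,\ldots,T_{n-1}$ are untouched; the mixed braid $T_0T_1T_0T_1 = T_1T_0T_1T_0$ is homogeneous of degree $2$ in $T_0$ on each side, so is preserved by the rescaling; and the cyclotomic relation transforms by $(qT_0 - qQ_1)\cdots(qT_0 - qQ_r) = q^r(T_0 - Q_1)\cdots(T_0 - Q_r) = 0$. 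Since $q \in \mathbb{F}^{\times}$, $\phi$ is bijective.

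Next, I would verify that the pullback along $\phi$ of the Specht module $S^{\bla}$ of $\h(q,{\boldsymbol Q})$ is the Specht module $S^{\bla}$ of $\h(q,q{\boldsymbol Q})$ indexed by the same multipartition $\bla$. This can be read off the Dipper--James--Mathas cellular basis of~\cite{DJM:CellularBasis}: the parameters ${\boldsymbol Q}$ enter only as scaling factors in products involving $T_0$, and the rescaling $T_0 \mapsto qT_0$ adjusts such basis elements by scalars within each cell, preserving the cell module structure up to isomorphism. The multipartitions indexing Specht modules in $\widetilde{\R}$ form the set $\{\bla : (\bla,\bs) \in \R\}$, which coincides with $\{\bla : (\bla,\bs+\mathbf{1}) \in \upsilon(\R)\}$, the indexing set for $\widetilde{\upsilon(\R)}$. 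Hence $\phi$ restricts to an isomorphism $\widetilde{\upsilon(\R)} \cong \widetilde{\R}$, as required.

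The only step requiring any genuine verification is the middle one, that Specht modules correspond under $\phi$; this is a routine bookkeeping check against the cellular basis construction, and everything else is immediate from the defining relations of the algebra and the definition of $\upsilon$.
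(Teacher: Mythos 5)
Your proof is correct and follows the same route as the paper, which simply asserts the isomorphism $\mathcal{H}_{r,n}(q,{\boldsymbol Q})\cong\mathcal{H}_{r,n}(q,{\boldsymbol Q}')$ in the sentence preceding the lemma and treats the block correspondence as immediate; you have merely made the rescaling $T_0\mapsto qT_0$ and the matching of Specht modules explicit. The extra detail you supply (that the Dipper--James--Mathas basis elements are carried to scalar multiples of themselves, so cell modules with the same label correspond) is exactly the bookkeeping the paper leaves implicit.
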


Our next step is to prove that if we stretch a Rouquier block then the block we obtain is decomposition and Morita equivalent. The proof of Corollary~\ref{CEq} split into two parts depending on whether or not the block had weight 0. Our proof splits up similarly, depending of whether or not the block is a core block.

Recall that Fayers~\cite{Fayers:Cores} defined a core block to be a block $\R \subset \A^r_e$ such that if $(\bla,\bs) \in \R$ then no component of $\bla$ has any $e$-rim hooks. The following observation follows from the definition of a Rouquier block. 

\begin{lemma} \label{IfNotCore}
Suppose that $\R$ is a Rouquier block in $\A^r_e$ and that $(\bla,\bs) \in \R$ has an $e$-rim hook. If $\eta(\la^{(k)},s_k) = (\brho^k,\bt^k)$ 
for $1 \leq k \leq r$ then $t^k_{i+1} - t^k_i \geq 0$ for all $1 \leq k \leq r$ and all $0 \leq i < e-1$. 
\end{lemma}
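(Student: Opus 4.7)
The plan is to exploit the hypothesis that $\bla$ has an $e$-rim hook by either reading off the inequality directly when $\la^{(k)}$ itself has positive weight, or (when it does not) by transferring an $e$-rim hook into component $k$ via a $\xrightarrow{e}_1$ move to produce another block member to which the Rouquier condition applies.

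Fix $k$ with $1 \leq k \leq r$ and $0 \leq i < e-1$; I aim to show $t^k_{i+1} - t^k_i \geq 0$. If $\wt(\la^{(k)}) \geq 1$, then since $\R$ is a Rouquier block, the component $(\la^{(k)}, s_k)$ is a Rouquier partition, so $\wt(\la^{(k)}) \leq t^k_{i+1} - t^k_i + 1$ and we are done.

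Now suppose $\wt(\la^{(k)}) = 0$, so $\la^{(k)}$ has no $e$-rim hook. Since $\bla$ does, some other component $k_0 \neq k$ satisfies $\wt(\la^{(k_0)}) \geq 1$. Hence there exists $b \in B_{s_{k_0}}(\la^{(k_0)})$ with $b - e \notin B_{s_{k_0}}(\la^{(k_0)})$. Choose any $b' \in B_{s_k}(\la^{(k)})$ with $b' + e \notin B_{s_k}(\la^{(k)})$ (for instance the topmost bead on any runner of the abacus for $\la^{(k)}$). Performing the $\xrightarrow{e}_1$ move that decreases $b$ by $e$ in component $k_0$ and increases $b'$ by $e$ in component $k$ produces $(\bmu, \bs) \in \R$ by Proposition~\ref{GenBlock}. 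The essential observation is that this move shifts beads only along fixed runners, so the $\beta$-sets $C_j$ for component $k$ used in the definition of $\eta$ are unchanged apart from a single element moving within one of them; in particular, the number of beads on each runner of component $k$ is preserved, and hence $\eta(\mu^{(k)}, s_k) = (\brho'^k, \bt^k)$ with the same $\bt^k$. On the other hand $\wt(\mu^{(k)}) = \wt(\la^{(k)}) + 1 = 1$. Applying the Rouquier condition on $(\bmu, \bs)$ to component $k$ gives $1 \leq t^k_{i+1} - t^k_i + 1$, that is $t^k_{i+1} - t^k_i \geq 0$.

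The only obstacle is bookkeeping: one must check that the $\xrightarrow{e}_1$ move is executable (which uses precisely the existence of an $e$-rim hook in some component of $\bla$) and that it preserves the $e$-core data $\bt^k$ of the target component. Both points are immediate once the beads $b$ and $b'$ are chosen as above, so the argument goes through cleanly.
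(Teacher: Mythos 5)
Your proof is correct and takes essentially the same route as the paper's: transfer an $e$-rim hook into component $k$ by a $\xrightarrow{e}_1$ move (which leaves every $\bt^k$ unchanged since beads stay on their runners), and then apply the Rouquier condition to the resulting multipartition, which still lies in $\R$. The only quibble is the parenthetical ``topmost bead'': to increase a $\beta$-number by $e$ you need $b'+e$ unoccupied, so you should take the \emph{last} bead on a runner, but the existence of such a $b'$ is clear in any case.
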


\begin{proof}
For each $1 \le k \le r$, there is a multipartition $(\bmu,\bs)$ with $(\bla,\bs) \xrightarrow{e}_1 (\bmu,\bs)$ and $\wt(\mu^{(k)}) \geq 1$; moreover $\eta(\mu^{(k)},s_k) = ({\boldsymbol \sigma}^k,\bt^k)$ for some ${\boldsymbol \sigma}^k \in \La^{(e)}$. Since $(\bmu,\bs)$ is also a Rouquier multipartition, the result follows. 
  \end{proof}

\begin{lemma} \label{Drop}
Suppose that $\R$ is a Rouquier block in $\A^r_e$ which is not a core block. Let $(\bla,\bs) \in \R$ and for $1 \leq k \leq r$, suppose that  $\eta(\la^{(k)},s_k) = (\brho^k,\bt^k)$.
Then \[\sum_{k=1}^r t^k_{i+1} - \sum_{k=1}^r t^k_i \geq 0\]
for all $0 \leq i <e-1$. 
\end{lemma}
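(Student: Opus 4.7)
The plan is to combine two ingredients: the existence of a multipartition in $\R$ with a removable $e$-rim hook (guaranteed since $\R$ is not a core block), together with the invariance of the column sums $\sum_{k=1}^r t^k_i$ across the block. First, because $\R$ is not a core block, I would pick some $(\bmu,\bs) \in \R$ admitting a removable $e$-rim hook, and write $\eta(\mu^{(k)},s_k) = (\boldsymbol{\sigma}^k,\tilde{\bt}^k)$. Lemma~\ref{IfNotCore} applied to $(\bmu,\bs)$ then yields $\tilde{t}^k_{i+1} - \tilde{t}^k_i \geq 0$ for every $k$ and every $0 \leq i < e-1$; summing over $k$ gives $\sum_k \tilde{t}^k_{i+1} \geq \sum_k \tilde{t}^k_i$.

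The second step is to show that for each $i$, the sum $\sum_{k=1}^r t^k_i$ depends only on the block $\R$ and not on the chosen configuration $(\bla,\bs)$. By Proposition~\ref{GenBlock} it suffices to verify invariance under the two generating relations $\xrightarrow{e}_1$ and $\xrightarrow{e}_2$. The key observation is that $t^k_i$ is determined entirely by the total number of beads on runner $i$ of $B_{s_k}(\la^{(k)})$, since pushing beads up within a single runner does not alter the count. A $\xrightarrow{e}_1$ move shifts a single bead by $e$ along one runner of one component, so no individual $t^k_i$ changes at all. For a $\xrightarrow{e}_2$ move, letting $i$ be the common residue of $b_1, b_2$ modulo $e$ and $i'$ that of $b_1+h, b_2+h$, the four bead changes are $-1$ on runner $i$ of component $k_1$, $+1$ on runner $i'$ of component $k_1$, $-1$ on runner $i'$ of component $k_2$, and $+1$ on runner $i$ of component $k_2$. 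Summing over components, the net change on runner $i$ is $-1+1=0$, the net change on runner $i'$ is $+1-1=0$, and every other runner is untouched; hence $\sum_k t^k_j$ is preserved for every $j$.

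Combining this invariance with the inequality established for $\bmu$ will then give $\sum_k t^k_{i+1} - \sum_k t^k_i = \sum_k \tilde{t}^k_{i+1} - \sum_k \tilde{t}^k_i \geq 0$, which is the desired conclusion. The only mildly subtle step is the bookkeeping of bead movements under $\xrightarrow{e}_2$ across two components; once the four changes are enumerated the cancellation falls out immediately, and the $\xrightarrow{e}_1$ case and the final invocation of Lemma~\ref{IfNotCore} are direct.
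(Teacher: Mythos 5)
Your proposal is correct and follows essentially the same route as the paper: use Proposition~\ref{GenBlock} to see that the column sums $\sum_k t^k_i$ are constant across the block, then reduce to a configuration with an $e$-rim hook and apply Lemma~\ref{IfNotCore}. The only difference is that you spell out the bead-counting verification of the invariance under $\xrightarrow{e}_1$ and $\xrightarrow{e}_2$, which the paper leaves implicit; your bookkeeping there is accurate.
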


\begin{proof}
First note that if 
$(\bmu,\bs) \in \R$ and $\eta(\mu^{(k)},s_k)=({\boldsymbol \sigma}^k,{\boldsymbol r}^k)$ 
 for $1 \leq k \leq r$, then by Proposition~\ref{GenBlock}, 
\[\sum_{k=1}^r t^k_{i} = \sum_{k=1}^r r^k_i\]
for all $0 \leq i \leq e-1$. So we may assume that $\bla$ has an $e$-rim hook. By Lemma~\ref{IfNotCore}, $t^k_{i+1} \geq t^k_i$ for all $1 \leq k \leq r$ and all $0 \leq i < e-1$. The result follows. 
\end{proof}

\begin{lemma} \label{NotCoreM1}
Suppose that $\R$ is a Rouquier block in $\A^r_e$ which is not a core block. Let $(\bla,\bs) \in \R$ and for $1 \leq k \leq r$, suppose that $\eta(\la^{(k)},s_k) = (\brho,\bt)$. 
Suppose $1 \leq k \leq r$ and $0 \leq i < i' \leq e-1$. Then $t^k_{i'}-t^k_i \geq -1$. 
\end{lemma}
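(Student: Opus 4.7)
First I would reduce to the essential case by handling multipartitions with a removable rim hook separately. If $\hk(\bla) \geq 1$ then Lemma~\ref{IfNotCore} applies to $\bla$ directly and gives $t^k_{j+1}-t^k_j \geq 0$ for every $j$; summing from $j=i$ to $j=i'-1$ yields the (stronger) bound $t^k_{i'}-t^k_i \geq 0 \geq -1$.

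The non-trivial case is therefore when $\bla$ is a multi-core, so that every $\la^{(k)}$ is itself an $e$-core. Here the Rouquier condition applied to $\bla$ only forces the consecutive bound $t^k_{j+1}-t^k_j \geq -1$, and naive telescoping gives merely $t^k_{i'}-t^k_i \geq -(i'-i)$, which is insufficient when $i' > i+1$. To sharpen this, I would exploit the non-core hypothesis: choose some $\bla'' \in \R$ with $\hk(\bla'') \geq 1$, so that Lemma~\ref{IfNotCore} guarantees $\bt^{k'}(\bla'')$ is non-decreasing for every component $k'$. The proof of Lemma~\ref{Drop} shows that each coordinate of $\sum_{k'=1}^r \bt^{k'}$ is a block invariant, so one obtains
\[
\sum_{k'=1}^r \bigl(t^{k'}_{i'}(\bla)-t^{k'}_i(\bla)\bigr) \;=\; \sum_{k'=1}^r \bigl(t^{k'}_{i'}(\bla'')-t^{k'}_i(\bla'')\bigr) \;\geq\; 0.
\]

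The plan for the final step is to isolate the component-$k$ contribution from this sum by using the Rouquier condition on the remaining components of $\bla$ (all of which also have weight $0$ in the multi-core case, giving consecutive bounds $\geq -1$) together with upper-bound information on $t^{k'}_{i'}(\bla)-t^{k'}_i(\bla)$ for $k' \neq k$. The main obstacle is precisely extracting the individual-component lower bound from what is \emph{a priori} only a sum constraint; this likely requires more refined combinatorial input, such as tracking how the $\xrightarrow{e}_2$ moves with $h \notin e\Z$ (the only operations that can alter individual $\bt^{k'}$'s while preserving the block) redistribute the $t^{k'}_j$'s, or appealing to the detailed description in~\cite{Fayers:Cores,DA} of the associated core block $\mathscr{C}$ which parameterizes $\R^\circ$.
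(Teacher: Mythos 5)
Your reduction to the multicore case and your appeal to Lemma~\ref{Drop} both match the opening of the paper's argument, but the proposal is not a proof: the final step, which you yourself flag as the main obstacle, is precisely where the content of the lemma lies, and you have not supplied it. The sum constraint from Lemma~\ref{Drop} is symmetric in the components and by itself can never isolate the single component $k$, so no amount of massaging it together with the consecutive bounds $t^{k'}_{j+1}-t^{k'}_j\geq -1$ will close the gap; one genuinely has to move beads between components.

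The paper does this by contradiction. Suppose $t^k_{i'}-t^k_i\leq -2$. By Lemma~\ref{Drop} there is a component $k'$ with $t^{k'}_{i'}>t^{k'}_i$. One then performs a single $\xrightarrow{e}_2$ move to produce $(\bmu,\bs)\in\R$: in component $k$ a bead passes from runner $i$ to runner $i'$ (so $r^k_i=t^k_i-1$, $r^k_{i'}=t^k_{i'}+1$ and ${\boldsymbol\sigma}^k$ is empty), while in component $k'$ a bead passes from runner $i'$ to runner $i$ and lands $w=t^k_i-t^k_{i'}+t^{k'}_{i'}-t^{k'}_i-2\geq t^{k'}_{i'}-t^{k'}_i>0$ rows below the first gap, so that ${\boldsymbol\sigma}^{k'}$ is $(w)$ in position $i$. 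Now $(\bmu,\bs)$ lies in $\R$ and has an $e$-rim hook, so Lemma~\ref{IfNotCore} gives $r^{k'}_{i'}\geq r^{k'}_{i'-1}\geq\cdots\geq r^{k'}_{i+1}$, whence $w\geq t^{k'}_{i'}-t^{k'}_i=r^{k'}_{i'}-r^{k'}_i+2\geq r^{k'}_{i+1}-r^{k'}_i+2$, contradicting the Rouquier bound $\wt(\mu^{(k')})\leq r^{k'}_{i+1}-r^{k'}_i+1$. Your closing suggestion of tracking how $\xrightarrow{e}_2$ moves redistribute the $t^{k'}_j$ points in the right direction, but without an explicit construction of this kind the argument does not go through.
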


\begin{proof}
If $\bla$ has an $e$-rim hook, the result follows from Lemma~\ref{IfNotCore} so assume that $\bla$ has no $e$-rim hooks. Suppose that there exist $k,i,i'$ as above which contradict the lemma. By Lemma~\ref{Drop} 
\[\sum_{l=1}^r t^l_{i'} - \sum_{l=1}^r t^l_{i} \geq 0\]
so in particular there exists $1 \leq k' \leq r$ with $t^{k'}_{i'} > t^{k'}_{i}$. 
We now define a configuration $(\bmu,\bs)\in \R$. We do so by describing $\eta(\mu^{(l)},s_l) = ({\boldsymbol \sigma}^l,{\boldsymbol r}^l)$ for $1 \leq l \leq r$. 
Set ${\boldsymbol \sigma}^l = \brho^l$ and ${\boldsymbol r}^l = \bs^l$ for $l \neq k,k'$. Set
\begin{align*}
r^k_j & = t^k_j \text{ for } j \neq i,i', & r^k_i & = t^k_i -1, & r^k_{i'} & = t^k_{i'}+1, & {\boldsymbol \sigma}^k & = (\emp,\emp,\ldots,\emp)\\
r^{k'}_j & = t^{k'}_j \text{ for } j \neq i,i', & r^{k'}_i & = t^{k'}_i + 1, & r^{k'}_{i'} &= t^{k'}_{i'}-1 & {\boldsymbol \sigma}^{k'} & = (\emp,\ldots,\emp,(w),\emp,\ldots,\emp)
\end{align*}
where $(w)$ occurs in position $i$ of $\sigma^{k'}$ and 
\[w=t^k_i-t^k_{i'}+t^{k'}_{i'}-t^{k'}_i-2 \geq t^{k'}_{i'}-t^{k'}_i >0.\] By construction $(\bla,\bs) \xrightarrow{e}_2 (\bmu,\bs)$ so $(\bmu,\bs)\in \R$ and hence is a Rouquier multipartition. 
Because $(\bmu,\bs)$ has an $e$-rim hook, Lemma~\ref{IfNotCore} implies that 
\[r_{i'}^{k'} \geq r_{i'-1}^{k'} \geq \ldots \geq r_{i+1}^{k'}\]
so
\[w \geq t^{k'}_{i'}-t^{k'}_i = r_{i'}^{k'}-r_i^{k'}+2 \geq r_{i+1}^{k'} - r_i^{k'} +2\]
contradicting the assumption that $(\bmu,\bs)$ is a Rouquier multipartition. 
\end{proof}

Suppose that $0 \leq i \leq e-1$. Define $\M(i) \in \Z^e_{\geq 0}$ by setting $M_j=0$ for $0 \leq j \leq i-1$ and $M_j=1$ for $i \leq j \leq e-1$.

\begin{lemma} \label{L22}
  Let $0 \leq i \leq e-1$. Suppose that $\R$ is a Rouquier block in $\A^r_e$ such that for every $(\bla,\bs) \in \R$ if $\eta(\la^{(k)},s_k) = (\brho^k,\bt^k)$
  for $1 \leq k \leq r$ then \[t^k_y + 1 - t^k_x \geq w^k\]
  for all $0 \leq x \leq i-1$ and all $i \leq y \leq e-1$, 
  where $w^k = |\brho^k|$. 
Set $\M=\M(i)$ and let $\R'=\Str(\R;\M)$. Define $\Pi:\R \rightarrow \R'$ by $\Pi(\bla,\bs)=\Str((\bla,\bs);\M)$ for $(\bla,\bs) \in \R$. Then $\R$ and $\R'$ are Morita equivalent and $\Pi$ is a decomposition equivalence map between $\R$ and $\R'$. 
\end{lemma}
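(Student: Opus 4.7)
The plan is to factor the claimed equivalence through the block $\R'' := \upsilon^{e-i}(\R)$. By applying Lemma~\ref{Applyupsilon2} $e-i$ times, the algebra-blocks $\widetilde{\R}$ and $\widetilde{\R''}$ are isomorphic (so both Morita and decomposition equivalent), with corresponding multipartitions given by $(\bla,\bs)\mapsto(\bla,\bs+(e-i)(1,\ldots,1))$. It therefore suffices to establish equivalences of both types between $\R''$ and $\R'$ via an explicit chain of Scopes moves; composing with the first step and tracking the bijection will give the lemma.

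For the Scopes chain, observe that if $(\bla,\bs)\in\R$ with $\eta(\la^{(k)},s_k)=(\brho^k,\bt^k)$, then in component $k$ the $t$-vector for the corresponding element of $\R'$ is
\[(t^k_0,\ldots,t^k_{i-1},\,t^k_i+1,\ldots,t^k_{e-1}+1),\]
while the corresponding element of $\R''$ has the cyclic shift
\[(t^k_i+1,\ldots,t^k_{e-1}+1,\,t^k_0,\ldots,t^k_{i-1}).\]
Going from $\R'$ to $\R''$ therefore amounts to a block exchange of the ``low'' runners (those carrying some $t^k_x$ with $x<i$) with the ``high'' runners (those carrying some $t^k_y+1$ with $y\ge i$). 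My plan is to realise this via $i$ passes of adjacent Scopes swaps: in pass $m$ (for $m=0,1,\ldots,i-1$), apply in order
\[\Upsilon_{i-1-m},\Upsilon_{i-m},\ldots,\Upsilon_{e-2-m},\]
bubbling the low runner currently at position $i-1-m$ rightward past each of the remaining high runners until it reaches position $e-1-m$. This procedure preserves the internal order of the low block and of the high block, so every swap $\Upsilon_j$ in the sequence exchanges a low $t^k_x$ at position $j$ with an adjacent high $t^k_y+1$ at position $j+1$, with $x\le i-1<i\le y$. The Scopes condition $(t^k_y+1)-t^k_x \ge w^k$ is then precisely the hypothesis of the lemma, and since $|\brho^k|=w^k$ is preserved by the Scopes moves, it holds for every intermediate configuration. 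Theorem~\ref{DA} and Theorem~\ref{C:Scopes} now give the desired equivalences between $\R''$ and $\R'$.

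Combining the two equivalences yields the claim for $\widetilde{\R}$ and $\widetilde{\R'}$. Tracing the composite on multipartitions: $(\bla,\bs)\in\R$ goes first to $(\bla,\bs+(e-i)(1,\ldots,1))\in\R''$ (whose $\eta$-data is the cyclic shift displayed above), and then the inverse Scopes sequence, composed of involutions and preserving $\bs$, un-cycles the runners, leaving in each component $k$ precisely the quotient $\brho^k$ and the stretched $t$-vector $\bt^k+\M(i)$. This is the $\eta$-data of $\Str((\bla,\bs);\M(i))$, so the composite bijection equals $\Pi$. The case $i=0$ is trivial, since the hypothesis is vacuous, no Scopes move is required, and $\Pi$ reduces to the $\upsilon^e$-isomorphism.

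The main obstacle is the combinatorial bookkeeping verifying that throughout the prescribed Scopes sequence every swap is always of the ``low versus high'' type, never ``low versus low'' nor ``high versus high''. Once this is established by a direct induction on the pass number (using that each pass moves exactly one low runner to the far right of the pattern while leaving the relative order of all remaining lows and highs intact), the hypothesis immediately validates every individual move in the chain.
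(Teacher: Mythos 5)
Your proposal is correct and follows essentially the same route as the paper: the paper also writes $\Pi$ as $\upsilon^{e-i}$ followed by a sequence of adjacent runner-swaps $\Upsilon_j$, each of which is a valid Scopes move precisely because of the hypothesis $t^k_y+1-t^k_x\geq w^k$, and then invokes Theorem~\ref{DA}, Theorem~\ref{C:Scopes} and Lemma~\ref{Applyupsilon2}. The only difference is your more explicit bookkeeping of the bubble-sort order of the swaps, which the paper leaves terse.
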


\begin{proof}
The proof follows very similar lines to the proof of Lemma~\ref{baseR}. We have
\[\Pi = \Upsilon_0^{i} \circ \ldots \circ \Upsilon_{e-i-2}^{i}\circ\Upsilon_{e-i-1}^{i}\circ\upsilon^{e-i},\]
so to prove the result we need to show that each map $\Upsilon_j$ above is a decomposition equivalence map. Let $(\bla,\bs) \in \R$ and for each $1 \leq  k\leq r$, define $(\brho^k,\bt^k)$ and $w^k$ as in the statement of the lemma. 
Then each $\Upsilon_j$ is a decomposition equivalence map provided that $t_y^k+1 - t_x^k \geq w^k$ for all $1 \leq k \leq r$, $0 \leq x \leq i-1$ and $i \leq y \leq e-1$ which is the condition we assumed in the lemma. Morita equivalence follows from Theorem~\ref{C:Scopes} and Lemma~\ref{Applyupsilon2}. 
\end{proof}

\begin{proposition} \label{Padd}
Let $0 \leq i \leq e-1$. Suppose that $\R$ is a Rouquier block in $\A^r_e$ which is not a core block. Let $\M=\M(i)$ and $\R'=\Str(\R;\M)$. Define $\Pi:\R \rightarrow \R'$ by $\Pi(\bla,\bs)=\Str((\bla,\bs);\M)$ for $(\bla,\bs) \in \R$. Then $\R'$ is a Rouquier block which is Morita equivalent to $\R$ and $\Pi$ is a decomposition equivalence map between $\R$ and $\R'$. 
\end{proposition}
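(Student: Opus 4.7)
The plan is to reduce the proposition to Lemma~\ref{L22} by verifying its numerical hypothesis together with the claim that $\R'$ is again a Rouquier block. Both are quick combinatorial checks on the core data $(\brho^k,\bt^k)$ of an arbitrary $(\bla,\bs)\in\R$, and Lemma~\ref{NotCoreM1} enters at exactly one point.

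First I would check that $\R'$ is a Rouquier block. Write $\M=\M(i)$. For $(\bla,\bs)\in\R$ with $\eta(\la^{(k)},s_k)=(\brho^k,\bt^k)$, the construction of $\Str(\cdot\,;\M)$ leaves the $e$-quotient $\brho^k$ (and hence the weight $w^k=|\brho^k|$) unchanged, while replacing $\bt^k$ with $(t^k_0,\ldots,t^k_{i-1},t^k_i+1,\ldots,t^k_{e-1}+1)$. Consequently every consecutive difference $t^k_{j+1}-t^k_j$ is preserved, except at $j=i-1$ where it increases by one. So each inequality $w^k\le t^k_{j+1}-t^k_j+1$ is maintained, and $\Str((\bla,\bs);\M)$ is again a Rouquier multipartition. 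This holds for every element of $\R$, so $\R'$ is a Rouquier block.

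Next I would verify the hypothesis of Lemma~\ref{L22}: for each $(\bla,\bs)\in\R$, each $1\le k\le r$, each $0\le x\le i-1$ and each $i\le y\le e-1$, one needs $t^k_y+1-t^k_x\ge w^k$. Since $y\ge i>i-1\ge x$ we always have $y-x\ge 1$. If $w^k\ge 1$, the Rouquier condition gives $t^k_{j+1}-t^k_j\ge w^k-1\ge 0$ for every $0\le j<e-1$, and telescoping yields
\[
t^k_y-t^k_x=\sum_{j=x}^{y-1}(t^k_{j+1}-t^k_j)\ \ge\ (y-x)(w^k-1)\ \ge\ w^k-1,
\]
which rearranges to the desired inequality. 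If instead $w^k=0$, the Rouquier condition is vacuous and the required bound becomes $t^k_y-t^k_x\ge -1$: this is precisely where the hypothesis that $\R$ is not a core block is used, since Lemma~\ref{NotCoreM1} gives exactly $t^k_y-t^k_x\ge -1$ for all $y>x$.

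The main (modest) obstacle is therefore the $w^k=0$ case, which would fail without the non-core hypothesis; everything else is a direct telescoping of the Rouquier inequalities. With both conditions checked, I would simply invoke Lemma~\ref{L22} to conclude that $\R$ and $\R'$ are Morita equivalent and that $\Pi$ is a decomposition equivalence map.
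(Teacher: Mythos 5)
Your proof is correct and follows essentially the same route as the paper: reduce to Lemma~\ref{L22}, verify its hypothesis by splitting into the cases $w^k\geq 1$ and $w^k=0$, and invoke Lemma~\ref{NotCoreM1} (via the non-core hypothesis) exactly in the $w^k=0$ case. The only cosmetic difference is that for $w^k\geq 1$ you telescope the Rouquier inequalities directly, where the paper combines one Rouquier inequality with Lemma~\ref{IfNotCore}; both yield the same bound.
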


\begin{proof}
It follows from the definitions that $\R'$ is a Rouquier block. 
  By Lemma~\ref{L22}, it is then sufficient to show that if $(\bla,\bs) \in\R$ with
$\eta(\la^{(k)},s_k)=(\brho^k,\bt^k)$ 
for $1 \leq k \leq r$ and $w_k = |\brho^k|$ then
  $t_y^k+1 - t_x^k \geq w^k$ for all $1 \leq k \leq r$, $0 \leq x \leq i-1$ and $i \leq y \leq e-1$. If $w^k \geq 1$ then by Lemma~\ref{IfNotCore}
\[t_{e-1}^k+1 \geq \ldots \geq t^k_{i}+1 \geq t^k_{i-1} + w^k \geq \ldots \geq t^k_0 + w^k\]
as required. If $w^k=0$ then by Lemma~\ref{NotCoreM1} 
\[t^k_y+1 - t^k_x \geq 0 = w^k\]
and we are done. 
\end{proof}

We now look at core blocks. We first state a result of Fayers~\cite[Theorem~3.1]{Fayers:Cores}.

\begin{lemma}
  Suppose that $\R$ is a a core block. Then there exists ${\boldsymbol b} = (b_0,b_1,\ldots,b_{e-1}) \in \Z^{e-1}$ such that if $(\bla,\bs) \in \R$ and $\eta(\la^{(k)},s_k) = (\brho^k,\bt^k)$
for $1 \leq k \leq r$ then $t_i^k = b_i + \delta_{i,k}$ where $\delta_{i,k} \in \{0,1\}$, for $0 \leq i \leq e-1$ and $1 \leq k \leq r$. The tuple ${\boldsymbol b}$ is called a base tuple. 
\end{lemma}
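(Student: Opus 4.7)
The plan is to leverage the two defining features of a core block: every multipartition in $\R$ has all components equal to $e$-cores, and the $\ap_e$-class $\R$ is still controlled by the residue multiset via Proposition~2.3. I would first observe that for any $(\bla,\bs)\in\R$ each $\la^{(k)}$ is an $e$-core, so $\brho^k=\emp$ and the component is determined entirely by the tuple $\bt^k=(t_0^k,\ldots,t_{e-1}^k)$. Hence $\R$ can be recorded as an orbit of tuples $(\bt^1,\ldots,\bt^r)$. Because no component has any $e$-rim hook, the move $\xrightarrow{e}_1$ is never available inside $\R$; all transitions between elements of $\R$ must be of type $\xrightarrow{e}_2$, and (by Lemma~2.8 applied twice) such a move changes two entries in $\bt^{k_1}$ by $\pm1$ and the corresponding two entries of $\bt^{k_2}$ by $\mp1$, subject to the core condition surviving.

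Next, I would translate the block condition into an arithmetic constraint. Using the standard formula for the number of $j$-residue nodes in the Young diagram of an $e$-core in terms of its $\bt$-vector (essentially a sum of expressions $\binom{t^k_j}{2}-\binom{t^k_{j-1}}{2}$ suitably interpreted modulo $e$), the equality $\Res_{\mc}(\bla)=\Res_{\mc}(\bmu)$ for two elements of $\R$ yields, for each $0\leq j\leq e-1$, a quadratic equation in the $t^k_j$'s. Adding these contributions across $k$ forces the sums $\sum_{k=1}^{r} t^k_j$ to be the same invariant for every $(\bla,\bs)\in\R$; this candidate determines a natural base tuple $\bm{b}$.

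Next I would prove the pointwise claim $t^k_i-b_i\in\{0,1\}$. Suppose for contradiction that some $t^k_i\geq b_i+2$ in an element of $\R$. Since $\sum_{k'} t^{k'}_i=rb_i+\#\{k':t^{k'}_i=b_i+1\}$, there must be a component $k'$ with $t^{k'}_i\leq b_i$, so that $t^k_i-t^{k'}_i\geq 2$. I would then exhibit a $\xrightarrow{e}_2$-move, taking the top bead on runner $i$ in component $k$ and jumping it by $h=e$ positions (or in the residue-preserving fashion permitted by Proposition~2.12), that lands the bead in a gap on runner $i$ in component $k'$; the resulting configuration has a removable $e$-rim hook, contradicting the assumption that $\R$ is a core block. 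Combined with the sum invariant, this pins $t^k_i\in\{b_i,b_i+1\}$ for all $k$ and all $(\bla,\bs)\in\R$, which is the required conclusion with $\delta_{i,k}\in\{0,1\}$.

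The main obstacle I anticipate is the third step: producing the precise $\xrightarrow{e}_2$-move that witnesses the contradiction. One has to arrange $b_1\equiv b_2\pmod e$ and check that moving beads from the top of runner $i$ in component $k$ and into a specific empty slot in another component lands in an actually empty position while not disturbing the contiguous filling on the other runners. Managing this case analysis (including the wrap-around between runners $e-1$ and $0$, which is presumably why the base tuple is stated to live in $\Z^{e-1}$ after a normalisation) is where the real combinatorial work sits; the surrounding framework---the residue-multiset invariance of Proposition~2.3, the generation lemma Proposition~2.12, and the structure of the map $\eta$---supplies the rest.
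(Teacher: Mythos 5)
The first thing to say is that the paper does not prove this lemma at all: it is imported verbatim as Fayers' Theorem~3.1 from \emph{Core blocks of Ariki-Koike algebras}, so there is no in-paper argument to compare against. Fayers' own proof, for what it is worth, does not proceed by exhibiting bead moves; it runs through his weight function for multipartitions, $\wt(\bla)=\sum_k c_{a_k}(\bla)-\tfrac12\sum_{i}(c_i(\bla)-c_{i+1}(\bla))^2$, and characterises core blocks as the blocks on which this weight attains an explicit minimum, from which the two-value structure of the $t^k_i$ falls out. Your route is therefore genuinely different, and its first two steps are sound: for a core block every $\brho^k$ is empty so each component is determined by $\bt^k$, and the column sums $\sum_{k}t^k_j$ are block invariants --- though you do not need the quadratic residue-counting formula for this, since it is immediate from Proposition~\ref{GenBlock} that a move of type $\xrightarrow{e}_1$ leaves every bead on its runner and a move of type $\xrightarrow{e}_2$ transfers one bead from runner $i$ to runner $j$ in one component and one bead from runner $j$ to runner $i$ in another.

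The genuine gap is exactly where you suspect it is, and the specific move you propose does not exist. A $\xrightarrow{e}_2$-move with $h=e$ requires $b_2\notin B_{s_{k_2}}(\la^{(k_2)})$ and $b_2+e\in B_{s_{k_2}}(\la^{(k_2)})$, which says precisely that component $k_2$ has a bead with a gap immediately above it, i.e.\ a removable $e$-rim hook --- impossible for any element of a core block. (For the same reason no $\xrightarrow{e}_1$-move is available, as you note.) So the witness configuration must be produced by a move with $h\not\equiv 0\bmod e$, arranged so that \emph{after} the move some runner of some component carries a bead strictly below a gap. The hypothesis $t^k_i-t^{k'}_i\geq 2$ alone does not hand you such a move: you must also locate a second runner $j$ on which the two components compare the other way (which exists because $\sum_j t^k_j$ and $\sum_j t^{k'}_j$ are pinned down by $s_k$ and $s_{k'}$), choose $h\equiv j-i$ and the positions $b_1,b_2$ compatibly, and verify that the transferred beads land so as to create the $e$-rim hook rather than merely reshuffling cores. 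That case analysis, including the wrap-around between runners $e-1$ and $0$, is the entire content of the theorem, and it is the part your proposal leaves as a declared obstacle rather than an argument. As written, the proposal is an honest plan with the decisive step missing; to complete it along these lines you would essentially be reproving Fayers' Theorem~3.1, and it would be cleaner to cite it, as the paper does.
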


We say that a block $\R\subset \A^r_e$ is a staircase block if $\R$ is a core block such that each $(\bla,\bs) \in \R$ satisfies the condition that if $\eta(\la^{(k)},s_k) = (\brho^k,\bt^k)$ for $1 \leq k \leq r$ then $t^k_0 \leq t^k_1 \leq \ldots \leq t_{e-1}^k$. 

\begin{lemma} \label{Stair2}
Suppose that $\R$ is a staircase block in $\A^e_r$. Let $0 \leq i \leq e-1$. Let $\M =\M(i)$ and $\R'=\Str(\R;\M)$. Define $\Pi:\R \rightarrow \R'$ by $\Pi(\bla,\bs)=\Str((\bla,\bs);\M)$ for $(\bla,\bs) \in \R$. Then $\R'$ is also a staircase block which is Morita equivalent to $\R$ and $\Pi$ is a decomposition equivalence map between $\R$ and $\R'$. 
\end{lemma}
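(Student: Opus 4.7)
The plan is to reduce directly to Lemma~\ref{L22}. Since $\R$ is a staircase block, it is in particular a core block, so for any $(\bla,\bs) \in \R$ and $1 \leq k \leq r$ we have $\brho^k = (\emp,\emp,\ldots,\emp)$ and thus $w^k := |\brho^k| = 0$. The staircase hypothesis gives $t^k_0 \leq t^k_1 \leq \ldots \leq t^k_{e-1}$, so for any $0 \leq x \leq i-1$ and $i \leq y \leq e-1$ we trivially have $t^k_y + 1 - t^k_x \geq 1 > 0 = w^k$. Hence the hypothesis of Lemma~\ref{L22} is satisfied, and that lemma immediately yields the Morita equivalence between $\widetilde{\R}$ and $\widetilde{\R'}$, and the fact that $\Pi$ is a decomposition equivalence map.

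It remains to verify that $\R'$ itself is a staircase block. First, stretching only changes the tuples $\bt^k$ (not the $e$-quotients $\brho^k$), so every multipartition in $\R'$ still has every component equal to a partition with empty $e$-quotient; that is, $\R'$ is again a core block. Secondly, if $(\bmu,\bs') = \Str((\bla,\bs);\M) \in \R'$ with $\eta(\mu^{(k)},s'_k) = (\brho^k, \bt^k + \M)$, then for each $0 \leq j \leq e-2$ we must check $t^k_j + M_j \leq t^k_{j+1} + M_{j+1}$. By construction of $\M = \M(i)$, $M_{j+1} \geq M_j$ for all $j$, and combined with $t^k_j \leq t^k_{j+1}$ this inequality holds. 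Thus $\R'$ is a staircase block.

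There is essentially no obstacle here: the entire content of the lemma is that the staircase condition makes the non-trivial hypothesis of Lemma~\ref{L22} vacuous (because $w^k = 0$), and the staircase structure is visibly preserved under the weakly increasing shift $\M(i)$. This is the analogue for core blocks of Proposition~\ref{Padd}, which handled the non-core case by exploiting Lemmas~\ref{IfNotCore} and~\ref{NotCoreM1} to check the same inequality; here the check is immediate from the definition of a staircase block.
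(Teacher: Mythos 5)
Your proof is correct and follows essentially the same route as the paper: the paper's proof likewise consists of observing that $\R'$ is a staircase block by definition and that a staircase block satisfies the hypotheses of Lemma~\ref{L22}, which you have simply spelled out in more detail (the core-block condition forces $w^k=0$ and the weakly increasing $t^k_j$ give the required inequality, with the weakly increasing shift $\M(i)$ preserving both properties).
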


\begin{proof}
That $\R'$ is a staircase block follows from the definition. We can apply Lemma~\ref{L22} to $\R$ since a staircase block satisfies the conditions of that lemma.  
\end{proof}

\begin{lemma} \label{CtoS}
Suppose that $\R$ is a core block in $\A^e_r$ which is not a staircase block. Then $\R$ is decomposition and Morita equivalent to a staircase block. 
  \end{lemma}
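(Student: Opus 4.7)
The plan is to transform $\R$ into a staircase block in two phases: first I would use Scopes equivalences to ``bubble sort'' the base tuple of $\R$ into non-decreasing order, and then I would apply Lemma~\ref{L22} iteratively to stretch the result into strictly increasing form.

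Let $\mathbf{b} = (b_0, \ldots, b_{e-1})$ be the base tuple of $\R$, and for each $k$ set $n_k = \sum_i \delta_{i,k}$, which is an invariant of the block (since every $(\bla,\bs) \in \R$ has the same $s_k = \sum_i t^k_i$ and hence the same $\sum_i \delta_{i,k} = s_k - \sum_i b_i$). Because $\R$ is a core block, $|\brho^k| = 0$, so the Scopes condition for $\Upsilon_i$ on any intermediate core block with base tuple $\mathbf{b}'$ reduces to $(b'_{i+1} - b'_i) + (\delta_{i+1,k} - \delta_{i,k}) \geq 0$ for all $k$ and all configurations. In the nontrivial case where some $n_k \in \{1, \ldots, e-1\}$, the worst case $\delta_{i,k} = 1,\, \delta_{i+1,k} = 0$ is always realized in the block, so the condition becomes simply $b'_{i+1} \geq b'_i + 1$. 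Hence whenever $b'_i \neq b'_{i+1}$, exactly one of $\R'$ and $\Upsilon_i(\R')$ satisfies the Scopes condition for $\Upsilon_i$, so by Theorems~\ref{DA} and~\ref{C:Scopes} they are Morita and decomposition equivalent. Since standard bubble sort only ever swaps adjacent pairs with $b'_i > b'_{i+1}$ (which are automatically distinct), every swap is a valid Scopes equivalence, and after finitely many swaps I arrive at a block $\R^{(1)}$ whose base tuple $\mathbf{b}^{(1)}$ is non-decreasing. (In the degenerate case where every $n_k \in \{0,e\}$, the block has a single element, each $\delta_{i,k}$ is constant in $i$, and the staircase condition reduces to $\mathbf{b}$ being non-decreasing, so Phase~1 alone suffices.)

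In Phase~2 I apply Lemma~\ref{L22} to $\R^{(1)}$ -- which is a Rouquier block because $\mathbf{b}^{(1)}$ is non-decreasing -- successively with $\M = \M(1), \M(2), \ldots, \M(e-1)$. The hypothesis of Lemma~\ref{L22} for a core block, $\max_{x<j} b_x \leq \min_{y \geq j} b_y$, holds at each stage because non-decreasingness is preserved by each stretching step (stretching by $\M(j)$ only adds $1$ to positions $\geq j$). After all $e-1$ stretchings the base tuple is $(b^{(1)}_0,\, b^{(1)}_1 + 1,\, b^{(1)}_2 + 2,\, \ldots,\, b^{(1)}_{e-1} + (e-1))$, whose consecutive differences are $b^{(1)}_{i+1} - b^{(1)}_{i} + 1 \geq 1$. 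In the resulting block $\R^{(2)}$ we therefore have $t^k_{i+1} - t^k_i \geq 1 + (\delta_{i+1,k} - \delta_{i,k}) \geq 0$ for every $k$ and every configuration, so $\R^{(2)}$ is a staircase block. Composing the chain of Morita and decomposition equivalences from Phases~1 and~2 gives the required equivalence $\R \leftrightarrow \R^{(2)}$.

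The main subtlety will be in Phase~1: I must carefully justify that, regardless of the sign of $b'_{i+1} - b'_i$, Theorems~\ref{DA} and~\ref{C:Scopes} yield a genuine two-sided equivalence between $\R'$ and $\Upsilon_i(\R')$, by applying them to whichever of the two blocks has its positions $i, i+1$ in strictly increasing order. Once this is in hand, Phase~2 is an essentially mechanical application of Lemma~\ref{L22}.
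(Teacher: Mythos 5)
Your proof is correct and follows essentially the same route as the paper: first sort the base tuple into non-decreasing order by Scopes moves $\Upsilon_i$ (applying Theorems~\ref{DA} and~\ref{C:Scopes} to whichever of the two blocks has runners $i,i+1$ in increasing order), then remove the remaining staircase violations by stretching via Lemma~\ref{L22}. The only differences are cosmetic: you stretch at every position $1,\ldots,e-1$ rather than only where a violation occurs (which makes termination more transparent), and your claim that the worst-case pattern $\delta_{i,k}=1,\ \delta_{i+1,k}=0$ is always realized is not actually needed, since the bubble-sort step only requires the sufficiency direction $b'_i>b'_{i+1}\Rightarrow$ the Scopes condition holds for $\Upsilon_i(\R')$.
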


\begin{proof}
  Suppose that ${\boldsymbol b}=(b_0,b_1,\ldots,b_{e-1})$ is a base tuple for $\R$. Let $(\bla,\bs) \in \R$ and suppose that
$\eta(\la^{(k)},s_k) = (\brho^k,\bt^k)$
  for $1 \leq k \leq r$. Suppose that $b_{i}>b_{i+1}$ for some $0 \leq i < e-1$. Then $t^k_i \geq t^k_{i+1}$ for all $1 \leq k \leq r$, so by Proposition~\ref{DA}, $\Upsilon_i$ is a decomposition equivalence map. So we may assume that $b_0 \leq b_1 \leq \ldots \leq b_{e-1}$. Suppose that there exists $1 \leq k \leq r$ and $1 \leq i \leq e-1$ such that $t_{i-1}^k>t_{i}^k$.
  Then $b_{i-1}=b_{i}$ and so $t_{i}^k+1 \geq t_{i-1}^k$ for all $k$. Furthermore if $0 \leq x \leq i-1$ and $i \leq y \leq e-1$ then $b_x \leq b_y$ and so $t_y^k+1 \geq t_x^k$ for $1 \leq k \leq r$. By Lemma~\ref{L22}, we get that $\R$ is decomposition equivalent to $\Str(\R;\M(i))$. Repeating the argument as necessary, we obtain a staircase block $\R'$ which is decomposition equivalent to $\R$.  
\end{proof}

\begin{theorem} \label{TEquiv}
Suppose that $\R \subset \A^r_e$ is a core block or a Rouquier block. Then $\R$ is decomposition and Morita equivalent to a Rouquier block $\R'$ which has the property that $\Psi_r(\R')$ lies in an $r$-Rouquier block. 
\end{theorem}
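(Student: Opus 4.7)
The plan is to realize $\R'$ as a sufficiently stretched version of $\R$, combining the basic stretchings of Section~\ref{Stretching} with Lemma~\ref{GGR}. The key observation is that the stretching vector $(0,M,2M,\ldots,(e-1)M)$ needed in Lemma~\ref{GGR} decomposes as $\sum_{i=1}^{e-1}M\cdot \M(i)$, and the identity $\Str(\Str((\bla,\bs);\M_1);\M_2)=\Str((\bla,\bs);\M_1+\M_2)$ follows directly from the definition. So it suffices to apply the basic stretching $\M(i)$ a total of $M$ times for each $i=1,\ldots,e-1$, provided that at every step the relevant stretching-equivalence result remains applicable.

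The argument then splits according to whether $\R$ is a core block. If $\R$ is a Rouquier block but not a core block, Proposition~\ref{Padd} shows that each basic stretching $\Str(\cdot;\M(i))$ gives a Morita and decomposition equivalent Rouquier block; since stretching preserves the $e$-quotients, the non-core property persists, so the iteration is valid. If $\R$ is a core block, I would first apply Lemma~\ref{CtoS} (or take $\R_0=\R$ if $\R$ is already a staircase block) to replace $\R$ by a Morita and decomposition equivalent staircase block $\R_0$, and then apply Lemma~\ref{Stair2} iteratively, which preserves the staircase property at every step. Here one needs to note that a staircase block is itself a Rouquier block: each component has weight $0$ (being part of a core block) and its $t$-tuple is weakly increasing, so the defining inequality $\wt(\la^{(k)})\leq t_{i+1}^k-t_i^k+1$ reduces to $0\leq t_{i+1}^k-t_i^k+1$, which always holds.

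After iterating the chosen basic stretching $M$ times in either case we obtain a Rouquier block $\R'$ that is Morita and decomposition equivalent to $\R$, and by choosing $M$ large enough as required by Lemma~\ref{GGR} we force $\Psi_r(\R')$ into an $r$-Rouquier block, which is what the theorem asks for.

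The main obstacle here is not in the present argument but has already been dissolved by the preceding lemmas: Proposition~\ref{Padd} together with Lemmas~\ref{Stair2} and~\ref{CtoS} supply the equivalences for each basic stretching move, and the distinction between core and non-core Rouquier blocks is the genuinely delicate point in those proofs. What remains is bookkeeping, the most subtle aspect being the verification that the hypotheses of the applicable equivalence lemma continue to hold after each stretching step so that the iteration is justified.
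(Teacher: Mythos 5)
Your proposal is correct and follows essentially the same route as the paper: split into the core and non-core cases, reduce to a staircase block via Lemma~\ref{CtoS} in the former, and iterate the basic stretchings $\M(i)$ (via Lemma~\ref{Stair2}, respectively Proposition~\ref{Padd}) until Lemma~\ref{GGR} applies. The details you supply --- the decomposition $(0,M,\ldots,(e-1)M)=\sum_{i=1}^{e-1} M\,\M(i)$, the persistence of the staircase and non-core hypotheses under stretching, and the observation that staircase blocks are Rouquier --- are exactly the bookkeeping the paper leaves implicit.
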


\begin{proof}
  Suppose that $\R$ is a core block. By Lemma~\ref{CtoS}, $\R$ is decomposition equivalent to a staircase block. Appying Lemma~\ref{Stair2} multiple times, we see that any staircase block is decomposition equivalent to a block which satisfies the conditions of Lemma~\ref{GGR}.

  Now suppose that $\R$ is a Rouquier block which is not a core block. Then applying Proposition~\ref{Padd} multiple times, we see that $\R$ is decomposition equivalent to a block which satisfies the conditions of Lemma~\ref{GGR}.

That $\R'$ itself is Rouquier follows from Lemma~\ref{NotRR}.  
\end{proof}

We end by noting some equivalences between Rouquier blocks. 

\begin{theorem} \label{DMRR}
Suppose that $\R$ and $\R'$ are Rouquier blocks such that $\R'=\Str(\R;{\boldsymbol M})$ for some ${\boldsymbol M} \in \Z^e$. Define $\Pi:\R \rightarrow \R'$ by $\Pi(\bla,\bs)=\Str((\bla,\bs);\M)$ for $(\bla,\bs) \in \R$. Then $\R$ and $\R'$ are Morita equivalent and $\Pi$ is a decomposition equivalence map between $\R$ and $\R'$. 
\end{theorem}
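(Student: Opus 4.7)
The plan is to express $\Pi=\Str(\cdot;\M)$ as a composition of atomic stretchings by the vectors $\M(0),\M(1),\dots,\M(e-1)$, each of which is handled by Proposition~\ref{Padd} (in the non-core Rouquier case), Lemma~\ref{Stair2} (in the staircase case), or Lemma~\ref{Applyupsilon2} (the free isomorphism for $\M(0)$).  Two preliminary observations frame everything: stretching is additive, so $\Str(\Str(\R;\M_1);\M_2)=\Str(\R;\M_1+\M_2)$ and the corresponding $\Pi$-maps compose accordingly; and $\{\M(0),\M(1),\dots,\M(e-1)\}$ is a $\Z$-basis of $\Z^e$, so $\M=\sum_{i=0}^{e-1}c_i\M(i)$ uniquely with $c_i\in\Z$.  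Moreover $\Str(\cdot;\M(0))$ coincides with $\upsilon^e$ on the nose, so by Lemma~\ref{Applyupsilon2} iterated $e$ times, stretching by any integer multiple of $\M(0)$ induces an isomorphism of blocks and thus contributes a trivial link in any chain of equivalences.

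For the non-core case, first pick an integer $N$ much larger than $\hk(\R)+\max_i|c_i|$ and set ${\boldsymbol N}=N\sum_{i=1}^{e-1}\M(i)=(0,N,2N,\dots,(e-1)N)$, so that $\R_{\boldsymbol N}:=\Str(\R;{\boldsymbol N})$ lies deep in the non-core Rouquier interior.  Build the chain $\R\sim\R_{\boldsymbol N}$ by applying a single $\M(i)$-stretch at a time (for $i\ge1$): each such atomic step raises only the runner-difference $t^k_i-t^k_{i-1}$ and only upward, so the block stays non-core Rouquier throughout and Proposition~\ref{Padd} applies at every step.  The same argument gives $\R'\sim\R'_{\boldsymbol N}:=\Str(\R';{\boldsymbol N})=\Str(\R;{\boldsymbol N}+\M)$.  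To bridge $\R_{\boldsymbol N}$ and $\R'_{\boldsymbol N}=\Str(\R_{\boldsymbol N};\M)$, decompose $\M=\sum_i c_i\M(i)$ and execute the $|c_i|$ stretches by $\pm\M(i)$ one at a time in any order; since all runner-differences of $\R_{\boldsymbol N}$ exceed $\hk(\R)+\max_i|c_i|$, every intermediate block retains runner-differences $\ge\hk(\R)+1$ and so remains non-core Rouquier, so Proposition~\ref{Padd} (or Lemma~\ref{Applyupsilon2} for the $\M(0)$ summand) applies at each step.  Concatenating the three sub-equivalences and using additivity of $\Str$, the composed equivalence is $\Str(\cdot;-{\boldsymbol N})\circ\Str(\cdot;\M)\circ\Str(\cdot;{\boldsymbol N})=\Str(\cdot;\M)=\Pi$, as required.

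The core case runs along the same lines, with Lemma~\ref{Stair2} playing the role of Proposition~\ref{Padd}.  Stretching preserves core-ness (since the $e$-quotients are unchanged), so $\R$ and every block in the chain is core.  Apply Lemma~\ref{CtoS} to replace $\R$ by an equivalent staircase block $\R^*$, note that for ${\boldsymbol N}$ large the padded block $\Str(\R^*;{\boldsymbol N})$ remains staircase, and then run the padding/atomic-stretch argument as above, now driven by Lemma~\ref{Stair2} at each $\M(i)$ step.  The analogous construction on the $\R'$-side, glued to the safely padded bridge in the middle, assembles into the desired equivalence.

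The main obstacle is the bookkeeping in the core case: one must verify that the Scopes-move conversion to staircase form on the $\R$-side and the $\R'$-side, when composed with the stretching chain, actually yields the map $\Pi=\Str(\cdot;\M)$ rather than some conjugate of it.  Since $\R$ and $\R'$ share the same $\delta$-data, the Lemma~\ref{CtoS} algorithm applied to the two sides produces parallel permutations once the base tuples are padded into the generic staircase regime; making this parallelism precise, so that the composed map simplifies to $\Str(\cdot;\M)$, is the technical crux of the argument.
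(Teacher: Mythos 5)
The paper states Theorem~\ref{DMRR} without proof, leaving it to be assembled from Proposition~\ref{Padd}, Lemmas~\ref{Stair2}, \ref{CtoS}, \ref{L22} and \ref{Applyupsilon2}; your proposal is exactly such an assembly, and your non-core branch is correct and is surely the intended argument. Padding both $\R$ and $\R'$ by ${\boldsymbol N}=(0,N,\dots,(e-1)N)$ with $N$ large, performing one $\M(i)$-stretch at a time (upward stretching only increases runner differences, and non-core-ness is preserved because the $e$-quotients are), inverting Proposition~\ref{Padd} for the downward steps, using $\Str(\cdot;\M(0))=\upsilon^e$ together with Lemma~\ref{Applyupsilon2} for the $\M(0)$-component, and invoking additivity of $\Str$ to identify the composite with $\Pi$ is all sound. (Only a cosmetic point: the threshold for $N$ must also absorb the runner differences of $\R$ itself, but the Rouquier condition bounds these below by $-1$, so ``sufficiently large'' covers it.)

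The core branch, however, contains a genuine gap which you flag but do not close, and it is not merely bookkeeping. Lemma~\ref{L22} cannot be applied directly to a non-staircase Rouquier core block: for a core block the Rouquier condition only gives $t^k_{j+1}-t^k_j\ge -1$ for adjacent $j$, so a component with, say, $\bt^k=(2,1,0)$ is permitted, and then no single $\M(i)$-stretch (in either direction) satisfies the hypothesis $t^k_y+1-t^k_x\ge 0$ for all $0\le x\le i-1$ and $i\le y\le e-1$ (take $x=0$, $y=2$ for either $i=1$ or $i=2$). Hence the detour through Lemma~\ref{CtoS} is unavoidable, and that lemma interposes runner-permutation maps $\Upsilon_j$ whose choice is dictated by the descent pattern of the base tuple. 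Since the base tuples of $\R$ and $\R'=\Str(\R;\M)$ differ by $\M$, their descent patterns differ in general, so the two sides of your chain apply different sequences of $\Upsilon_j$'s and there is no reason for these to cancel in the composite. To finish you must either prove that the composite of the $\R$-side conversion, the stretch bridge, and the inverse of the $\R'$-side conversion equals $\Str(\cdot;\M)$ as a map on multipartitions --- which requires an explicit commutation rule between $\Upsilon_j$ and $\Str(\cdot;\M(i))$ --- or exhibit an ordering of atomic stretches keeping every intermediate core block inside the hypotheses of Lemma~\ref{L22}; the proposal supplies neither.
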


We note that technically speaking the stretching operator may not give a Scopes equivalence as it may shift the multicharge, and as such we have avoided using that terminology above. If $r=1$ then for each $w\geq 0$ there is a unique Rouquier block (up to Scopes equivalence). We would like some way of indexing the Rouquier blocks when $r\geq 2$ up to equivalence.

\end{document}